\newtheorem{lemma}{Lemma}[section]
\newtheorem{theorem}[lemma]{Theorem}
\newtheorem{proposition}[lemma]{Proposition}
\newtheorem{definition}[lemma]{Definition}
\newtheorem{remark}[lemma]{Remark}
\newtheorem{example}[lemma]{Example}
\newtheorem{assumption}{Assumption}[section]
\newcommand{\filettata}{\mathbb}
\newcommand{\N}{\filettata{N}}
\newcommand{\Hi}{\mathcal H}
\newcommand{\R}{\filettata{R}}
\newcommand{\Prob}{\filettata{P}}
\newcommand{\E}{\mathbb{E}}
\newcommand{\spl}{\begin{split}}
\newcommand{\lit}{\end{split}}
\newcommand{\cadlag}{c\`adl\`ag }
\newcommand{\tot}[1]{^{(#1)}}
\newcommand{\DD}{\Delta^n_i}
\newcommand{\incf}[4]{\frac{\Delta^{#1}_{#2} {#4}\tot{#3}}{\tau\tot{#3}_{#1}}}
\newcommand{\JJ}{\Delta^n_j}
\newcommand{\si}{\sum_{i=1}^n}
\newcommand{\qaz}{{(i-1)\Delta_n}}
\newcommand{\DDint}{\int_{(i-1) \Delta_n}^{i\Delta_n}}
\newcommand{\incratio}[2]{\frac{\Delta^n_{#2} G\tot{#1}}{\tau\tot{#1}_n} }
\newcommand{\stable}{\overset{st.}{\Rightarrow}}
\newcommand{\BSS}{$\mathcal{BSS}$ }
\newcommand{\bss}{$\mathcal{BSS}$}
\newcommand{\Holder}{H\"older }
\newcommand{\leb}{\ensuremath{Leb}}
\renewcommand{\epsilon}{\varepsilon}
\renewcommand{\theta}{\vartheta}
\renewcommand{\phi}{\varphi}
\lbrace\begin{array}{@{}l@{}}}%
\DeclarePairedDelimiter{\abs}{\lvert}{\rvert}
\newcommand \norm[1]{\left\lVert #1 \right\rVert}
\DeclarePairedDelimiter\floor{\lfloor}{\rfloor}
\title{A central limit theorem for the realised covariation of a bivariate {B}rownian semistationary process}
\author{Andrea Granelli\thanks{E-mail: \texttt{a.granelli12@imperial.ac.uk}} \and Almut E.~D.~Veraart\thanks{E-mail: \texttt{a.veraart@imperial.ac.uk}}}
\date{
\textit{Department of Mathematics, Imperial College London}\\
\textit{ 180 Queen's Gate, 
 London, SW7 2AZ, 
UK} \\ \ \\
\today}
\begin{document}

\maketitle
\begin{abstract} 

This article presents a weak law of large numbers and a central limit theorem for the  scaled realised covariation of a bivariate Brownian semistationary  process. The novelty of our results lies in the fact that we  derive the suitable asymptotic theory both in a multivariate setting  and outside the classical semimartingale framework.
 The  proofs  rely heavily on recent developments in Malliavin calculus. 

\end{abstract}

\noindent\emph{Keywords:} Central limit theorem, stable convergence, fourth moment theorem, moving average process, bivariate Brownian semistationary process,  multivariate setting, high frequency data.
\\
\noindent\emph{MSC:} 60F05, 60F15, 60G15

\section{Introduction}
Within the realm of   stochastic processes that fail to be a semimartingale, the recent literature has devoted particular attention to the Brownian semistationary ($\mathcal{BSS}$) process, a process that has originally been  used in the context of turbulence modelling in \cite{barndorff2009Brownian}, but has been subsequently employed as a price process in energy markets in \cite{BNBV2013Spot}.
The \BSS process in its most basic form can be written as:
\[
Y_t=\int_{-\infty}^t g(t-s)\sigma_s\,dW_s,
\]
for a deterministic kernel function $g$, a stochastic volatility process $\sigma$ and a Brownian motion $W.$
\cite{pakkanen2011brownian} proved  that $\mathcal{BSS}$ processes have conditional full support  and thus may be used as a price model in financial markets with transaction costs.
Also, \BSS processes can be used in the context of option pricing, through the modelling of \emph{rough volatility} (see \cite{gatheral2014volatility} and \cite{bayer2016pricing}).
In this context,  \cite{bennedsen2015hybrid} present a hybrid simulation scheme used in  Monte Carlo option pricing.

Its spreading use in applications has led to many theoretical questions, some of which have only recently obtained an answer.

Still, the stochastic-analytic properties of the Brownian semistationary process  are not yet completely understood. The univariate case has been studied in detail, and in particular, numerous papers have been published that deal with its asymptotic theory of multipower variation.

The theory of multipower variation for semimartingales was first introduced in \cite{barndorff2004power} and expanded in several subsequent papers (see \cite{barndorfaf2006econometrics}, \cite{barndorff2006central}, \cite{barndorff2006limit}, \cite{kinnebrock2008note}, \cite{jacod2008asymptotic}, \cite{lepingle1976variation}, \cite{vetter2010limit}).
One of the main applications of  multipower variation is the construction of robust estimators that allow to disentangle the impact of the jump risk from the stochastic volatility risk in the price of financial assets.

Outside the semimartingale class a general theory seems to be impossible to achieve and results have to be proved for the particular collection of processes under consideration. For the univariate \BSS process, one can see for example \cite{barndorff2011multipower} with their study of multipower variation through Malliavin calculus and the more recent paper  \cite{barndorff2013limit}  which deals with the multipower variation of  higher order differences of the \BSS process in order to estimate its smoothness.

In the present paper, we define and work with the bivariate Brownian semistationary process.
The introduction of a second dimension greatly increases the complexity, but also allows for novel possibilities in terms of modelling dependence. 
Given the importance in practical applications of the Brownian semistationary process, the first natural result in the multivariate theory must be a limit theorem allowing inference to be performed on the dependence between two components.

In the semimartingale case, inference on the dependence can be performed through the quadratic covariation between two processes. Applying the same ideas to this setting immediately poses the question of whether the quadratic covariation can be successfully defined between two $\mathcal{BSS}$ processes.
There are very few results in the literature concerning quadratic covariation between two non semimartingales. As an example,  \cite{follmercovariation} deal with this problem, but they only consider $[X,F(X)]$, where $X$ is a semimartingale and $F$ is an absolutely continuous function with square integrable derivative. In this case $F(X)$ is not necessarily a semimartingale, while $X$ always is.

We instead propose the study of $[Y\tot 1, Y\tot 2]$, when both $Y\tot 1$ and $Y\tot 2$ are $\mathcal{BSS}$ processes and are not semimartingales.
Hence the aim is to show convergence of an appropriately scaled version of the following realised covariation process:
\begin{equation}
\label{njgfdjlhfds}
\sum_{i=1}^{\floor{nt}} \left(Y\tot1_{\frac in}-Y\tot1_{\frac{i-1}{n}}\right) \left(Y\tot 2_{\frac in}-Y\tot 2_{\frac{i-1}{n}}\right).
\end{equation}

A weak law of large numbers in such a setting has recently been obtained in \cite{GranelliVeraart2017a}. Here, we tackle the arguably more difficult case of deriving a suitable central limit theorem. 
Central limit theorems for processes are results which are usually hard to prove, and techniques to prove them vary from case to case. The most celebrated result of this kind is Donsker's theorem, which states that an appropriately scaled, symmetric random walk converges weakly to Brownian motion (a standard reference is \cite{billingsley2009convergence}).
The high frequency limits of semimartingales are typically processes with a mixed Gaussian distributions, and these central limit theorem results are typically stronger than the standard ones that only state weak convergence in the Skorokhod space, in order for statistical inference to be performed in a  feasible way.
  They instead involve \emph{stable convergence} of processes, which involves proving weak convergence  in an extended sample space, where typically a new Brownian motion lives, which is independent from the original processes. 
We will see that such results can also be obtained in our more general non-semimartingale setting. 

The methods we use in  our proofs rely  heavily
on the powerful \emph{Fourth Moment Theorem} which was proven in \cite{Nualart2005central}. 
Their theory was developed by combining  Stein's method with Malliavin calculus.
The most comprehensive reference on the subject is the monograph \cite{nourdin2012normal}.

The outline of the remainder of this article is as follows. Section \ref{S2} introduces the notation and defines the bivariate Gaussian core and the bivariate Brownian semistationary process. Moreover, we formulate assumptions which ensure that we are outside the semimartingale setting (since the  corresponding theory is well-known in the semimartingale framework).
Section \ref{S3} gives a brief self-contained summary of the key concepts of Malliavin calculus and the celebrated Fourth Moment Theorem needed for proving our results. The main contributions of our article can be found in Sections \ref{S4} and \ref{S5}, where we state the central limit theorems for a suitably scaled version of the realised covariation of a Gaussian core and a Brownian semistationary process, respectively. Section \ref{S6} concludes. The proof of the central limit theorem in the case of the Gaussian core is presented in Section \ref{S7}, and in the case of a Brownian semistationary process in Section \ref{S8}.

\section{The setting}\label{S2}
Throughout this article we denote by $\left(\Omega, \mathscr F,\mathscr F_t, \Prob\right)$  a filtered, complete probability space and by $\mathcal B(\R)$  the class of Borel subsets of $\R$ and we consider a finite time horizon $[0, T]$ for some $T>0$.

We will assume that $\left(\Omega, \mathscr F,\mathscr F_t, \Prob\right)$ supports two  independent $\mathscr F_t$-Brownian measures $W\tot1,  W^{(2)}$ on $\R$, for which we briefly   recall the definition.

\begin{definition}[Brownian measure]
An $\mathscr F_t$-adapted Brownian measure $W\colon \Omega\times\mathcal B(\R)\to \R$ is a Gaussian stochastic measure such that, if $A\in \mathcal B(\R)$ with $\E[(W(A))^2]<\infty$, then $W(A) \sim N(0,\leb(A))$,
where $\leb$ is the Lebesgue measure.
Moreover, if $A\subseteq [t,+\infty)$, then $W(A)$ is independent of $\mathscr F_t$.
\end{definition}

Let us first define the so-called bivariate Gaussian core, which is in fact a bivariate Gaussian moving average process with correlated components. 
\begin{definition}[The Gaussian core]
\label{otioti}
Consider two  Brownian measures $W\tot1$ and $W\tot2$ adapted to $\mathscr F_t$ with 
$dW\tot1_tdW\tot2_t=\rho dt$, for $\rho\in [-1,1]$. Further take 
two nonnegative deterministic functions $g\tot1, g\tot2 \in L^2((0,\infty))$ which are continuous on $\mathbb{R}\setminus\{0\}$. 
Define, for $j\in \{1,2\}$,
\[
G_t\tot j:= \int_{-\infty}^t g\tot j(t-s)  \,dW\tot j_s.
\]
Then the vector process $(\mathbf G_t)_{t\geq 0}=(G^{(1)}_t,G^{(2)}_t)^{\top}_{t\geq 0}$
is called the (bivariate) Gaussian core.
\end{definition}
If we add stochastic volatility to the Gaussian core, then we obtain a bivariate Brownian semistationary (\bss) process defined as follows.
\begin{definition}[Bivariate Brownian semistationary process]
\label{2dimbss}
Consider two  Brownian measures $W\tot1$ and $W\tot2$ adapted to $\mathscr F_t$ with 
$dW\tot1_tdW\tot2_t=\rho dt$, for $\rho\in [-1,1]$. Further take 
two nonnegative deterministic functions $g\tot1, g\tot2 \in L^2((0,\infty))$ which are continuous on $\mathbb{R}\setminus\{0\}$. 
Let further $\sigma \tot1, \sigma\tot2$ be \cadlag, $\mathscr F_t$-adapted stochastic processes and assume that for $j\in\{1,2\}$, and for all $t\in [0,T]$:
$\int_{-\infty}^t g^{(j)2}(t-s) \sigma^{(j)2}_s\,ds<\infty$.
Define, for $j\in \{1,2\}$,
\[
Y_t\tot j:= \int_{-\infty}^t g\tot j(t-s) \sigma_s \tot j \,dW\tot j_s.
\]
Then the vector process $(\mathbf Y_t)_{t\geq 0}=(Y^{(1)}_t,Y^{(2)}_t)^{\top}_{t\geq 0}$
is called a bivariate Brownian semistationary process.
\end{definition}

\subsection{Technical assumptions}
Let us now introduce a few working assumptions. Most of them are standard and already appear in similar forms in the literature, for example in \cite{corcuera2013asymptotic}.

\subsubsection{(Non-) semimartingale conditions}
As mentioned in the introduction, we are exclusively interested in the non-semimartingale setting since the corresponding asymptotic theory for semimartingales is well established in the literature, see e.g.~\cite{protter2005stochastic, barndorff2004econometric}.
It turns out that the (non-) semimartingale property of $G^{(j)}$ or
$Y^{(j)}$ (for $j=1,2$) depends on the properties of the functions $g^{(j)}$.

Let us for a moment suppress the superscripts and write
$G_t=\int_{-\infty}^tg(t-s)dW_s$ for a univariate Gaussian core. 
Consider  the filtration $\left(\mathscr F_t^{W,\infty}\right)_{t\geq0}$ which is the smallest filtration with respect to which $W$ is an adapted Brownian measure and recall the classical result due to 
\cite{knight1992foundations}:
\begin{theorem}[Knight]
The process $(G_t)_{t\geq 0}$ is an $\mathscr F_t^{W,\infty}$-semimartingale if and only if there exists $h \in L^2(\R)$ and $\alpha\in \R$ such that:
$
g(t)=\alpha+\int_0^t h(s)\,ds$.
\end{theorem}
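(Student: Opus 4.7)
The plan is to handle the two implications separately, dealing with the easy direction first.

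\textbf{Sufficiency.} Assume $g(t)=\alpha+\int_0^t h(s)\,ds$ with $h\in L^2(\R)$. The strategy is to substitute this representation into $G_t=\int_{-\infty}^t g(t-s)\,dW_s$ and split the range of integration at $0$. For $s<0$ one writes $g(t-s)=g(-s)+\int_0^t h(v-s)\,dv$, and for $0\leq s\leq t$ one keeps $g(t-s)=\alpha+\int_0^{t-s} h(u)\,du$. The stochastic Fubini theorem, whose hypotheses are verified using $h\in L^2$, then produces the explicit decomposition
\[
G_t=G_0+\alpha\, W((0,t])+\int_0^t \left(\int_{-\infty}^v h(v-u)\,dW_u\right)dv.
\]
The middle term is a scaled Brownian motion, hence a continuous $\mathscr F_t^{W,\infty}$-martingale, while the final term is absolutely continuous in $t$ with a square-integrable density, so of finite variation. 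This exhibits $G$ as a semimartingale.

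\textbf{Necessity.} Assume $G$ is an $\mathscr F_t^{W,\infty}$-semimartingale with canonical decomposition $G=G_0+M+A$. The first step is to compute the quadratic variation $\langle G\rangle$ explicitly. Using the Gaussian splitting
\[
G_t-G_s=\int_{-\infty}^s [g(t-u)-g(s-u)]\,dW_u+\int_s^t g(t-u)\,dW_u,
\]
in which the second summand is independent of $\mathscr F_s^{W,\infty}$ with variance $\int_0^{t-s} g(v)^2\,dv$, a Riemann-sum argument yields $\langle G\rangle_t=\alpha^2 t$ with $\alpha:=g(0+)$; the existence of this limit is forced by the existence of $\langle G\rangle$. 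The Brownian martingale representation theorem then gives $M_t=\int_0^t \phi_u\,dW_u$ with $\int_0^t \phi_u^2\,du=\alpha^2 t$, and joint Gaussianity of $G$ together with the deterministic nature of $\langle M\rangle$ forces $\phi_u\equiv\alpha$ (up to an irrelevant sign).

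The final step is to identify the drift. The process $A_t:=G_t-G_0-\alpha\, W((0,t])$ is a continuous Gaussian process of finite variation, hence of zero quadratic variation and, by a standard argument combining continuity with Gaussian increments, absolutely continuous in $t$. Writing $A_t=\int_0^t F_v\,dv$ and representing the mean-zero Gaussian density $F_v$ as a Wiener integral $F_v=\int_{-\infty}^v h(v-u)\,dW_u$ for some $h\in L^2(\R)$, one then reverses the Fubini computation of the sufficiency direction to obtain $g(t)-\alpha=\int_0^t h(s)\,ds$. I expect the main obstacle to lie in this last paragraph: extracting, from the mere existence of a semimartingale decomposition, the explicit kernel representation of the finite-variation part as a stochastic integral with an $L^2$ density. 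This is the delicate core of Knight's original argument and relies on the rigidity of Gaussian processes adapted to a Brownian filtration, in particular the fact that every Gaussian functional of $W$ is representable as a Wiener integral against $W$.
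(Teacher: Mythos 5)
First, note that the paper does not prove this statement: it is quoted as a classical result, with the proof deferred to \cite{knight1992foundations}, so there is no in-paper argument to compare yours against. Your sufficiency direction is correct and is essentially the computation underlying the explicit decomposition recorded in Theorem \ref{SCforSM}: substituting $g=\alpha+\int_0^{\cdot}h$ and applying the stochastic Fubini theorem (whose hypotheses hold since $\int_{-\infty}^{v}h(v-u)^{2}\,du=\norm{h}_{L^2}^{2}<\infty$) exhibits $G$ as $G_0$ plus a scaled Brownian motion plus an absolutely continuous adapted drift.

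The necessity direction, however, contains genuine gaps, and they sit exactly where the content of Knight's theorem lies. (i) You assert that the existence of $[G]$ forces the existence of $g(0+)$. What the Riemann sums control is, roughly, $\frac{1}{\Delta}\int_0^{\Delta}g^{2}(x)\,dx$ together with $\frac{1}{\Delta}\int_0^{\infty}\left(g(x+\Delta)-g(x)\right)^{2}\,dx$; convergence of these Ces\`aro-type averages does not by itself yield a pointwise limit of $g$ at the origin, and ruling out oscillation of $g$ near $0$ is part of what must be proved, not an input. (ii) Even granting $\langle M\rangle_t=\alpha^{2}t$, a representation $M_t=\int_0^{t}\phi_u\,dW_u$ with $\phi_u^{2}=\alpha^{2}$ a.e.\ only identifies $M$ as \emph{some} scaled Brownian motion, not as $\alpha\,W((0,t])$ itself: $\phi$ could change sign on a set of positive measure, and your definition of $A$ presupposes the stronger identification. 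You also use without proof that the canonical decomposition of a Gaussian semimartingale has Gaussian components. (iii) A continuous Gaussian process of finite variation need not be absolutely continuous (a deterministic singular continuous function is a degenerate counterexample), so the passage from ``finite variation'' to $A_t=\int_0^{t}F_v\,dv$ requires a real argument. (iv) Finally, the representation $F_v=\int_{-\infty}^{v}h(v-u)\,dW_u$ with a single stationary kernel $h$ needs the stationarity of the increments of $G$ and the injectivity of the Wiener integral map; you flag this point yourself, but it is not the only delicate one. As it stands, the necessity half is a plausible outline rather than a proof.
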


In the case of a univariate Brownian semistationary (\bss) process given by 
\begin{equation}
\label{definition}
Y_t=\int_{-\infty}^t g(t-s)\,\sigma_s dW_s,
\end{equation}
\cite{barndorff2009Brownian} derived the 
following sufficient conditions for a \BSS process $Y$ to be a semimartingale:
\begin{theorem}\label{SCforSM}
Under the assumptions that
(i) $g$ is absolutely continuous and $g'\in L^2((0,\infty))$, 
(ii) $\lim_{x\to 0^+} g(x)=:g(0^+)<\infty$, 
(iii) the process $g'(-\cdot)\sigma_\cdot$ is square integrable,
then $Y_t$ defined as in \eqref{definition} is an $\mathscr F^{W,\infty}_t$-semimartingale.
In this case $Y_t$ admits the decomposition:
\[
Y_t=g(0^+)W_t+\int_0^t\,dl \left[\int_{-\infty}^l g'(l-s)\sigma_s\,dW_s\right].
\]
\end{theorem}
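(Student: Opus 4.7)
The plan is to obtain the decomposition by expressing $g(t-s)$ via its primitive $g'$ (as justified by absolute continuity), substituting into the defining integral for $Y_t$, and then applying a stochastic Fubini theorem to swap the order of time integration and Brownian integration. The stochastic integral that emerges naturally is a continuous martingale (the $g(0^+)W_t$-type term, here really $g(0^+)\int_0^t\sigma_s\,dW_s$ after carrying $\sigma$ through the computation), while the term obtained from Fubini is an ordinary $dl$-integral of an adapted process, hence absolutely continuous in $t$, and therefore of finite variation. Together these yield a semimartingale decomposition of $Y_t$.

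Concretely, I would split $Y_t - Y_0 = \int_{-\infty}^{0} [g(t-s)-g(-s)]\sigma_s\,dW_s + \int_{0}^{t} g(t-s)\sigma_s\,dW_s$. On $(-\infty,0]$, assumption (i) gives $g(t-s)-g(-s) = \int_{0}^{t} g'(l-s)\,dl$; on $[0,t]$, assumption (ii) gives the stronger $g(t-s) = g(0^+) + \int_{s}^{t} g'(l-s)\,dl$. Substituting these and swapping integrals (a stochastic Fubini) produces
\[
Y_t - Y_0 = g(0^+)\int_0^t \sigma_s\,dW_s + \int_0^t\!\left[\int_{-\infty}^{0} g'(l-s)\sigma_s\,dW_s + \int_{0}^{l} g'(l-s)\sigma_s\,dW_s\right]dl,
\]
which collapses to the stated decomposition with inner integral $\int_{-\infty}^l g'(l-s)\sigma_s\,dW_s$.

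The only nontrivial step is justifying the stochastic Fubini swap. For this I would verify, for each $l\in[0,T]$, that the inner stochastic integral $Z_l := \int_{-\infty}^l g'(l-s)\sigma_s\,dW_s$ is well defined as an $\mathscr F_l$-measurable square-integrable random variable: this is exactly the content of assumption (iii), together with $g'\in L^2((0,\infty))$ from (i). To then swap integrals I would check the joint integrability condition, e.g. $\int_0^t \E\bigl[\int_{-\infty}^{l}(g'(l-s))^2\sigma_s^2\,ds\bigr]^{1/2}dl < \infty$, which follows from the same square-integrability combined with the \cadlag (hence locally bounded on any compact in expectation through a localising sequence) nature of $\sigma$, after standard localisation.

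The main obstacle I expect is purely technical: controlling the contribution of the tail $(-\infty,0]$ simultaneously across the parameter $l$. The key observation is that, because $g'\in L^2((0,\infty))$, the map $l\mapsto \int_{-\infty}^{0} g'(l-s)\sigma_s\,dW_s$ is well-behaved in $l$, and the finite-variation part $\int_0^t Z_l\,dl$ is well defined pathwise once $l\mapsto Z_l$ has a measurable modification. Once Fubini is in place, the martingale property of $g(0^+)\int_0^\cdot\sigma_s\,dW_s$ is classical and the drift term is automatically of bounded variation on $[0,T]$, completing the semimartingale decomposition.
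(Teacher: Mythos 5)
Your argument is correct and is essentially the standard proof of this result; note that the paper itself does not prove Theorem \ref{SCforSM} but cites it from Barndorff-Nielsen and Schmiegel (2009), where exactly this route (write $g(t-s)-g(-s)$ and $g(t-s)-g(0^+)$ as integrals of $g'$, apply a stochastic Fubini theorem, and read off a continuous local martingale plus an absolutely continuous drift) is used. You also correctly observe that, once the volatility is carried through, the martingale part is $g(0^+)\int_0^t\sigma_s\,dW_s$ and the decomposition holds modulo the constant $Y_0$, which is what the displayed formula in the statement should read.
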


Let us now return to the bivariate case and 
 formulate conditions which ensure that the bivariate  processes ${\bf G}, {\bf Y}$ are \emph{not}  semimartingales. 
This can be achieved by  relaxing the first two assumptions in Theorem \ref{SCforSM} since  both  assumptions are necessary for $G^{(j)}$ to belong to the semimartingale class (see \cite{bassegaussian}) for $j=1,2$.

\begin{assumption}
\label{squareassumm}
For $j\in\{1,2\}$, we assume that $g\tot j\colon\R\to \R^+$ are nonnegative functions and continuous, except possibly at $x=0$. Also, $g\tot j(x)=0$ for $x<0$ and  $g\tot j \in L^2\left((0,+\infty\right))$.
We further ask that $g\tot j$ be differentiable everywhere with derivative $\left(g\tot j\right)'\in L^2((b^{(j)},\infty))$ for some $b^{(j)}>0$ and $\left((g\tot j)'\right)^2$ non-increasing in $[b^{(j)},\infty)$. 
\end{assumption}
In the following we will set $b=\max\{b^{(1)},b^{(2)}\}$, then  $\left(g\tot j\right)'\in L^2((b,\infty))$ and $\left((g\tot j)'\right)^2$ is non-increasing in $[b,\infty)$ for $j=1,2$.

It is important to note that we are not assuming that $\left(g\tot j\right)'\in L^2((0,\infty))$ in order to exclude the semimartingale case. In particular, we must have that, for all $\epsilon >0$, $\sup_{x\in (0,\epsilon)} \left(g\tot j\right)' (x) = \infty.$

\subsubsection{Technical assumptions for the cross-correlations}

We need some additional technical assumptions to control the terms arising in the covariation between the two components of the bivariate Gaussian core and the bivariate \BSS process.
Such assumptions will be formulated in terms of slowly varying functions, for which we briefly recall the definition, see e.g.~\cite{bingham1989regular}.

\begin{definition}[Slowly and regularly varying function]
A measurable function $L\colon (0,\infty) \to (0,\infty)$ is called \emph{slowly varying at infinity} if, for all $\lambda >0$ we have that 
$\lim_{x\to\infty} \frac {L(\lambda x)}{L(x)} =1$.
A function $g\colon (0,\infty) \to (0,\infty)$ is called \emph{regularly varying at infinity} if, for $x$ large enough, it can be written as:
$g(x)=x^\delta L(x)$,
for a slowly varying function $L$. The parameter $\delta$ is called the \emph{index of regular variation}.
Finally, a  measurable function $L\colon (0,\infty) \to (0,\infty)$ is called \emph{slowly varying at zero} (resp. \emph{regularly varying at zero}) if $x\to L\left(\frac 1x\right)$ is slowly varying (resp. regularly varying) at infinity.
\end{definition}

 For $i,j\in \{1,2\}$, we write $\rho_{i,j}=\rho$ for $i\not =j$ and $\rho_{i,j}=1$ for $i=j$. Also, 
let us introduce the functions mapping $\R^+$ into $\R^+$, with 
$i,j\in\{1,2\}$:
\begin{equation}
\label{lplotk}
\bar R\tot {i,j} (t) :=\E\left[\left(G\tot j _{t}-G \tot i _0\right)^2\right]
=\norm{g\tot i}_{L^2}^2+\norm{g\tot j}_{L^2}^2-2\E\left[G \tot i_0G\tot j_{t}\right].
\end{equation}
We note that we can write
\begin{align*}
\bar R\tot {i,j} (t)
&=C_{i,j} + 2\rho_{i,j}\int_0^{\infty}(g^{(j)}(x)-g^{(j)}(x+t))g^{(i)}(x)dx,
\end{align*}
 where  $C_{i,j}:=\norm{g\tot i}_{L^2}^2+\norm{g\tot j}_{L^2}^2-2\rho_{i,j}\int_0^{\infty}g^{(i)}(x)g^{(j)}(x)dx$, where in particular $C_{i,i}=0$. 
This enables us to formulate our next assumption.
\begin{assumption} \label{assss}
For all $t\in(0,T)$, there exist
slowly varying functions $L_0\tot {i,j}(t)$ and $L_2\tot{i,j}(t)$ which are  continuous on $(0,\infty)$ such that 
\begin{equation}\label{mfkjfkfid}
\bar R\tot {i,j} (t)= C_{i,j} + \rho_{i,j}
 t^{\delta\tot i+\delta\tot j+1}L_0\tot {i,j}(t), \quad \text{ for  } i,j\in\{1,2\},
 \end{equation}
and
\begin{equation*}
  \frac{1}{2}(\bar R\tot{i,j})''(t)=\rho_{i,j}t^{\delta\tot i+\delta\tot j-1} L_2\tot{i,j}(t), \quad \text{ for  } i,j\in\{1,2\},
\end{equation*}
where $\delta\tot 1, \delta\tot 2\in\left(-\frac 12,\frac12\right)\setminus \{0\}$.

Also, if we denote $\tilde L\tot {i,j}_0(t):={\sqrt{ L\tot{i,i}_0(t) L\tot{j,j}_0(t)  }}$,
we ask that  the functions $L_0\tot {i,j}(t)$ and $L_2\tot{i,j}(t)$ are such that, for all $\lambda >0$, { there exists a $H\tot{i,j}\in \R$ such that:}
\begin{equation}
\label{ewiue}
\lim_{t\to 0+} \frac{L_0\tot {i,j}(\lambda t)}{\tilde L_0\tot{i,j}(t)}= H\tot{i,j}<\infty,
\end{equation}
and that there exists $b\in (0,1)$, such that:
\begin{equation}\label{ass4}
\limsup_{x\to 0^+} \sup_{{y\in(x,x^b)}}\left|\frac{L\tot{i,j}_2(y)}{\tilde L_0^{(i,j)}(x)}\right|<\infty.
\end{equation}
\end{assumption}

In this situation, the restriction $\delta\tot j\in (-\frac12,0)\cup(0,\frac12)$ ensures that the process leaves the semimartingale class.

\begin{remark}
\label{reass}
A  consequence of Assumption \ref{assss} is that:
\[
\sqrt {\bar R\tot {i,i} (t) \bar R\tot {j,j} (t)}=t^{\delta\tot i+\delta \tot j +1}\tilde L\tot{ i,j}_0(t),
\]
where $\tilde L\tot {i,j}_0(t) $ is again a slowly varying function at zero which is continuous on $(0,\infty)$.
\end{remark}

\begin{example} 
In the univariate case, condition \eqref{mfkjfkfid} reads (suppressing superscripts):
\begin{equation}
\label{llkl}
\bar R (t)=  t^{2\delta+1}L_0(t).
\end{equation}
The  so-called  Gamma kernel given by 
$g(x)=e^{-\lambda x} x^{\delta} 1_{\{x>0\}}$,  for $\lambda>0, \delta>-\frac{1}{2}$,
has attracted attention in applications (both to turbulence and finance), see for instance the  
  review paper by \cite{barndorff2016assessing}. In the case when  $\delta \in (-\frac12,0)\cup (0,\frac12],$ $g$ satisfies Assumptions \ref{squareassumm} and condition \eqref{mfkjfkfid}, see   \cite{barndorff2011multipower}. 
\end{example}

\begin{example}
Condition \eqref{ewiue} in Assumption \ref{assss} is satisfied if $\lim_{t\to 0+} L_0\tot{i,j} (t)=M\tot {i,j}<\infty$ and $\lim_{t\to 0+}\sqrt{ L\tot i_0(t) L\tot j_0(t)  } = N\tot{i,j}<\infty$ with $\frac {M\tot {i,j}}{N\tot {i,j}}=H\tot{i,j}$.
In the case when $i=j$, we have $H^{i,j}=1$, so condition  \eqref{ewiue} is satisfied.
\end{example}

As a consequence of Assumption \ref{assss}, we highlight a fact that will be particularly useful for our purposes.
\begin{lemma}
\label{terrific}

Define \begin{align*}
c(x)&:=\int_0^{x} g\tot1(s)g\tot2(s)\,ds+\int_0^\infty\left(g\tot1(s+x)-g\tot1(s)\right)\left(g\tot2(s+x)-g\tot2(s)\right)\,ds.
\end{align*}
If Assumption \ref{assss} holds, then it is possible to  show that:
\begin{equation}\label{pgkfjf}
c(x)= x^{\delta\tot1+\delta\tot2+1} L\tot{1,2}_4(x),
\end{equation}
where $L_4\tot{1,2}$ is a continuous function on $(0,\infty)$ which is slowly varying  at zero, and  $\delta\tot 1, \delta\tot 2\in\left(-\frac 12,\frac12\right)\setminus \{0\}$. Moreover,  there exists a constant $|H|<\infty$ such that 
\begin{align}\label{terrific2}
\lim_{x\to 0+}\frac{L_4^{(1,2)}(x)}{\tilde L_0^{(1,2)}(x)}=H.
\end{align}
More precisely, $H=\frac{1}{2}\left(H\tot{1,2}+H\tot {2,1}\right)$.
\end{lemma}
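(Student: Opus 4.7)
The plan is to reduce $c(x)$ to a linear combination of the cross-variance increments $\bar R\tot{1,2}(x)-C_{1,2}$ and $\bar R\tot{2,1}(x)-C_{2,1}$, and then to read off the claimed representation directly from Assumption \ref{assss}. First, I would expand the product $(g\tot1(s+x)-g\tot1(s))(g\tot2(s+x)-g\tot2(s))$ in the second integral of $c(x)$ into four pieces. In the piece $\int_0^\infty g\tot1(s+x)g\tot2(s+x)\,ds$, the substitution $u=s+x$ produces $\int_0^\infty g\tot1(u)g\tot2(u)\,du-\int_0^x g\tot1(u)g\tot2(u)\,du$, whose second part cancels exactly the leading term $\int_0^x g\tot1(s)g\tot2(s)\,ds$ in the definition of $c(x)$. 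What remains after grouping is
\[
c(x)=\int_0^\infty\bigl(g\tot2(s)-g\tot2(s+x)\bigr)g\tot1(s)\,ds+\int_0^\infty\bigl(g\tot1(s)-g\tot1(s+x)\bigr)g\tot2(s)\,ds,
\]
each term being finite by Cauchy--Schwarz since $g\tot1,g\tot2\in L^2((0,\infty))$.

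Next, from the alternative representation of $\bar R\tot{i,j}$ written just after \eqref{lplotk}, the two integrals above coincide (up to the factor $2\rho$) with $\bar R\tot{1,2}(x)-C_{1,2}$ and $\bar R\tot{2,1}(x)-C_{2,1}$ respectively. Invoking \eqref{mfkjfkfid} with $(i,j)=(1,2)$ and $(i,j)=(2,1)$, and using that $\delta\tot1+\delta\tot2$ is symmetric in the indices while $\rho_{1,2}=\rho_{2,1}=\rho$, I obtain
\[
c(x)=\frac{x^{\delta\tot1+\delta\tot2+1}}{2}\bigl(L_0\tot{1,2}(x)+L_0\tot{2,1}(x)\bigr).
\]
Setting $L_4\tot{1,2}(x):=\tfrac{1}{2}(L_0\tot{1,2}(x)+L_0\tot{2,1}(x))$ then establishes \eqref{pgkfjf}, and the continuity together with slow variation at zero of $L_4\tot{1,2}$ is immediate from the same properties assumed in \eqref{mfkjfkfid} for $L_0\tot{1,2}$ and $L_0\tot{2,1}$.

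For \eqref{terrific2}, I would divide by $\tilde L_0\tot{1,2}(x)$ and take $x\to 0^+$. Applying \eqref{ewiue} termwise with $\lambda=1$, and observing that $\tilde L_0\tot{2,1}=\tilde L_0\tot{1,2}$ by the symmetry of the definition $\tilde L_0\tot{i,j}=\sqrt{L_0\tot{i,i}L_0\tot{j,j}}$, the two limits combine to yield $H=\tfrac{1}{2}(H\tot{1,2}+H\tot{2,1})$, as claimed. The only conceptually nontrivial step in the whole argument is the opening algebraic rearrangement that identifies $c(x)$ with the sum of two cross-variance increments; once that link is made, the rest is direct bookkeeping under Assumption \ref{assss}.
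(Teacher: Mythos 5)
Your proposal is correct and follows essentially the same route as the paper's proof: the identical algebraic cancellation via the substitution $u=s+x$, the identification of the two remaining integrals with $\bigl(\bar R\tot{1,2}(x)-C_{1,2}\bigr)$ and $\bigl(\bar R\tot{2,1}(x)-C_{2,1}\bigr)$ (up to the factor $2\rho$), and the definition $L_4\tot{1,2}=\tfrac12\bigl(L_0\tot{1,2}+L_0\tot{2,1}\bigr)$ with \eqref{ewiue} at $\lambda=1$ giving $H=\tfrac12\bigl(H\tot{1,2}+H\tot{2,1}\bigr)$. If anything, you make explicit a step the paper leaves implicit (the link to the representation of $\bar R\tot{i,j}$ following \eqref{lplotk}), which is a small improvement in readability.
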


\begin{example}[Gamma Kernel]
\label{gammagamma}
If the kernel function is the Gamma kernel: 
\[
g \tot i(s) = s^{\delta \tot i } e^{-\lambda \tot i s}  1_{\{s\geq 0\}},
\]
for  $\lambda \tot i>0$, $\delta \tot i \in (-\frac 12, \frac 12) \setminus\{0\}$, and similarly for $g\tot j $, then one can show directly that Lemma \ref{terrific} holds, and give an explicit expression for the constant $H$:
\begin{multline*}
H=\left( -\frac{\Gamma(\delta\tot 1+1)\Gamma(-1-\delta\tot 1 -\delta \tot 2)}{\Gamma(-\delta\tot1)}-\frac{\Gamma(\delta\tot 2+1)\Gamma(-1-\delta\tot 1 -\delta \tot 2)}{\Gamma(-\delta\tot2)}\right)\\2^{1+\delta\tot1+\delta\tot2}\sqrt{\frac{\Gamma(\frac 32+\delta\tot i)\Gamma(\frac 32+\delta\tot j)}{\Gamma(\frac 12-\delta\tot i)\Gamma(\frac 12-\delta\tot j)}}.
\end{multline*}
A proof of this result can be found in Section \ref{proofgammagamma}.
\end{example}

\subsection{Discrete observations and scaling factor}\label{DOSF}
While the stochastic processes we are going to consider are defined in continuous time, we work under the assumption that we only observe them discretely which is the case of practical relevance. Moreover, our asymptotic results  rely on so-called \emph{in-fill asymptotics} where the time interval is fixed, but we sample more and more frequently. This is in contrast to the, in time series more widely used, concept of \emph{long span asymptotics} where the stepsize between observations stays constant, but the number of observations grows, meaning that a bigger and bigger time interval is considered in the asymptotic case.

 Suppose  that we
 sample our processes discretely along successive partitions of $[0,T]$. A partition $\Pi_n$ of $[0,T]$ will be a collection of times $0=t_0<\dots<t_i<t_{i+1}<\dots<t_n=T$, where, for simplicity, we assume that the partition is equally spaced. The mesh of the partition will therefore be $\Delta_n=\frac 1n$ and we have $\lim_{n\to \infty} \Delta_n =0$.

We will use the following notation for (high-frequent) increments of the stochastic processes we are considering: For instance, for the process $G^{(j)}$, we denote its increment by $\DD G^{(j)}:=G_{i\Delta_n}^{(j)}-G_{(i-1)\Delta_n}^{(j)}$, for $j=1,2$. A straightforward computation shows that the increments can be represented as  
\begin{align}
\begin{split}
\DD G^{(j)} &= \int_{-\infty}^\qaz \left(g^{(j)}\left(i\Delta_n-s\right)-g^{(j)}\left(\qaz-s\right)\right)\,dW^{(j)}_s\\
&\qquad + \DDint g^{(j)}(i\Delta_n-s)\,dW^{(j)}_s.
\end{split}
\end{align}
We define the \emph{realised covariation} as
\begin{align*}
\sum_{i=1}^{\lfloor nt \rfloor} \DD G\tot1 \DD G \tot2,
\quad \text{ for } n\geq 1, t\in [0,T]. 
\end{align*}
We know that in the case  when ${\bf G}$ is a semimartingale, then  
\begin{align*}
\sum_{i=1}^{\lfloor nt \rfloor} \DD G\tot1 \DD G \tot2 \overset{\text{u.c.p.}}{\rightarrow} [G^{(1)},G^{(2)}]_t, \quad \text{ as } n\to \infty,
\end{align*}
where the convergence is uniform on compacts in probability (u.c.p.) and the limiting process is the quadratic covariation. However, outside the semimartingale framework, the quadratic covariation does not necessarily exist. 
\cite{GranelliVeraart2017a} recently considered the non-semimartingale case and showed that, under suitable assumptions, the (possibly scaled) realised covariation converges u.c.p.~to an appropriate limit which can be viewed as the correlation between the two non-semimartingale components. In the present work, we would like to go a step further and prove a central limit theorem associated with the scaled realised covariation. 
In order to do so, we need to define the suitable scaling factor. It turns out that the following choice is appropriate. For $j\in \{1, 2\}$, set 
\begin{align}\label{scalingfactor}
\tau _n \tot j :=\sqrt{ \E\left[ \left(\Delta_1^n G\tot j\right)^2\right]}=\sqrt{\int_0^{\infty} \left(g\tot j(s+\Delta_n)-g\tot j(s)\right)^2\,ds + \int_0^{\Delta_n} \left(g\tot j (s)\right)^2\,ds}.
\end{align}

The scaled realised covariation of the Gaussian core is then given by
\begin{align*}
\sum_{i=1}^{\lfloor nt \rfloor} \frac{\DD G\tot1 }{\tau_n^{(1)}}\frac{\DD G \tot2}{\tau_n^{(2)}}.
\end{align*}

Our aim is now to derive a central limit theorem for the suitably centred and scaled realised covariation of the Gaussian core. As soon as we have that result, we will generalise it to the case when the underlying bivariate process is a bivariate Brownian semistationary process and, hence, also accounts for stochastic volatility in each component. 

The key component for proving the two central limit theorems is the so-called Fourth Moment Theorem, see \cite{nourdin2012normal}. Hence we are going to give a very brief self-contained introduction to Malliavin calculus in the next section which will then allow us to formulate the Fourth Moment Theorem. 
\section{Pathway to the Fourth Moment Theorem}\label{S3}
The purpose of this section is to illustrate the  background necessary to illustrate the techniques developed by Nualart and Peccati that led them to proving the celebrated \emph{Fourth Moment Theorem}.

We start with an introduction to Malliavin calculus. A good source for this material is Section 2 of \cite{nourdin2012normal}. A good summary of the necessary tools is also presented in \cite{Corcuera2012New}. The standard comprehensive reference for Malliavin calculus is \cite{nualart2006malliavin}.

\subsection{Wiener Chaos decomposition}
We fix a real, separable Hilbert space $\mathcal H$, with its scalar product $\langle \cdot, \cdot \rangle_{\mathcal H}$ and norm $\norm {\cdot}_{\mathcal H} :=\langle \cdot, \cdot \rangle_{\mathcal H}^{\frac 12}$.
We denote by $X=\{X(h) \colon h\in \mathcal H\}$ an \emph{isonormal} Gaussian process over $\mathcal H$ defined on some probability space $(\Omega, \mathscr F, \Prob)$, by which we mean a stochastic process indexed over $\Hi$ such that $\E\left[X(g)X(h)\right]=\langle g,h\rangle_\Hi$,
for every $f,g \in \Hi$. We will assume that $\mathscr F$ is generated by $X$.

The first important result is the granted  existence of an isonormal process:
\begin{proposition}
Given a real, separable Hilbert space $\Hi$, there exists an isonormal process over $\Hi$.
\end{proposition}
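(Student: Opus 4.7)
The plan is to exploit the separability of $\Hi$ to reduce the problem to the well-known construction of an i.i.d.\ standard Gaussian sequence, and then to define the isonormal process coordinate-wise through a Fourier-type expansion.

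First, using the assumption that $\Hi$ is separable, I would fix a countable orthonormal basis $\{e_n\}_{n\geq 1}$ of $\Hi$. On an auxiliary probability space $(\Omega,\mathscr F,\Prob)$ (for instance the canonical product space $\R^{\N}$ equipped with the product Borel $\sigma$-algebra and the infinite product of standard Gaussian laws, whose existence is granted by the Kolmogorov extension theorem), I would construct an i.i.d.\ sequence $\{\xi_n\}_{n\geq 1}$ of $N(0,1)$ random variables. For each $h\in\Hi$ I would then set
\[
X(h):=\sum_{n=1}^{\infty}\langle h,e_n\rangle_{\Hi}\,\xi_n.
\]

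Next, I would verify that the series defining $X(h)$ converges in $L^2(\Omega,\mathscr F,\Prob)$. Because the $\xi_n$ are orthonormal in $L^2$, the partial sums form a Cauchy sequence whose $L^2$-norm squared equals $\sum_{n=1}^{N}\langle h,e_n\rangle_{\Hi}^2$, which is summable by Parseval's identity to $\norm{h}_{\Hi}^2$. In particular $X(h)$ is a well-defined square-integrable random variable, and as an $L^2$-limit of centred Gaussians it is itself centred Gaussian with variance $\norm{h}_{\Hi}^2$. A short additional argument, again using that a finite linear combination $\sum_k c_k X(h_k)$ coincides with an $L^2$-limit of Gaussian linear combinations of the $\xi_n$, shows that $\{X(h):h\in\Hi\}$ is a Gaussian family.

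Finally, I would check the defining covariance identity: for $g,h\in\Hi$, using the bilinearity of the inner product in $L^2$ and the orthonormality of the $\xi_n$,
\[
\E[X(g)X(h)]=\sum_{n=1}^{\infty}\langle g,e_n\rangle_{\Hi}\langle h,e_n\rangle_{\Hi}=\langle g,h\rangle_{\Hi},
\]
the last equality being Parseval applied to the ONB $\{e_n\}$. This is precisely the isonormal property. I do not foresee any serious obstacle here since the argument is entirely structural; the only point requiring a little care is justifying that the whole family $\{X(h)\}_{h\in\Hi}$ (and not just each individual $X(h)$) is jointly Gaussian, which follows from closedness of the Gaussian class under $L^2$-limits applied to arbitrary finite linear combinations.
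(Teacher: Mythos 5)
Your construction is correct and complete: the basis expansion $X(h)=\sum_n \langle h,e_n\rangle_{\Hi}\,\xi_n$ with i.i.d.\ standard Gaussians, together with the $L^2$-convergence, joint Gaussianity via $\sum_k c_k X(h_k)=X\left(\sum_k c_k h_k\right)$, and Parseval for the covariance, is exactly the standard argument. The paper itself offers no proof but simply cites Theorem 2.1.1 of Nourdin and Peccati, whose proof is precisely this construction, so your approach coincides with the one the paper relies on.
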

\begin{proof}
See \cite{nourdin2012normal}, Theorem 2.1.1.
\end{proof}

We now introduce the fundamental notion of Wiener chaos, which plays a crucial role in our derivation of results.
First, we recall:
\begin{definition}[Hermite polynomials]
Let $p\geq 0$ be an integer. We define the $p$-th Hermite polynomial as $H_0:=1$, for $p=0$, and $H_{p+1}(x):=xH_p(x)-pH_{p-1}(x)$,  for $p>0$.
\end{definition}
\begin{remark}
This is just one of many equivalent definitions for the Hermite polynomials. See \cite{nourdin2012normal}, Definition 1.4.1 and Proposition 1.4.2 for alternative equivalent formulations and characterisations.
\end{remark}

\begin{definition}
For each $n\geq 0$, $\Hi_n$ denotes the closed linear subspace of $L^2(\Omega)$ generated by the random variables $\{H_n\left(X(h)\right)\colon h\in \Hi, \norm{h}_\Hi=1\}$.
The space $\Hi_n$ is called the \emph{$n$-th Wiener chaos of $X$.}
\end{definition}

Wiener chaoses of different order on a Gaussian space are orthogonal as the next proposition shows.
\begin{proposition}
Let $Z,Y\sim \mathscr N(0,1)$ be jointly Gaussian. then, for all $n,m\geq 0$:
\[
\E\left[H_n(Z)H_m(Y)\right]=\begin{cases} n! \left(\E\left[ZY\right]\right)^n, \qquad &\text{if $n=m$}, \\ 0, \qquad &\text{otherwise}.\end{cases}
\]
\end{proposition}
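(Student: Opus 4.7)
The approach I would take is to exploit the exponential generating function of the Hermite polynomials together with the moment generating function of the bivariate Gaussian, and then identify coefficients of a formal power series in two variables. This avoids any recursion on $n$ and $m$ and handles both cases ($n=m$ and $n\neq m$) simultaneously.

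First I would recall (or prove, by a short induction using the recursion $H_{p+1}(x)=xH_p(x)-pH_{p-1}(x)$) the identity
\[
\exp\!\Bigl(sx-\tfrac{s^2}{2}\Bigr)=\sum_{n\geq 0}\frac{s^n}{n!}H_n(x),\qquad s,x\in\R.
\]
Setting $\rho:=\E[ZY]$ and applying this generating function at $x=Z$ and at $x=Y$, I would write the double series
\[
\exp\!\Bigl(sZ-\tfrac{s^2}{2}\Bigr)\exp\!\Bigl(tY-\tfrac{t^2}{2}\Bigr)=\sum_{n,m\geq 0}\frac{s^n t^m}{n!\,m!}H_n(Z)H_m(Y).
\]
Taking expectations on both sides is justified because, for fixed $s,t$, the partial sums are dominated in $L^1(\Omega)$ by the product of two integrable (lognormal) random variables; a standard use of Fubini or dominated convergence transfers the expectation inside the double sum.

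Next I would compute the left-hand side by the bivariate Gaussian MGF: since $(Z,Y)$ is jointly Gaussian with $\E[Z]=\E[Y]=0$, $\Var(Z)=\Var(Y)=1$, and $\E[ZY]=\rho$, one has
\[
\E\!\left[\exp(sZ+tY)\right]=\exp\!\Bigl(\tfrac{s^2+t^2}{2}+st\rho\Bigr),
\]
so that the expectation of the left-hand side simplifies to
\[
\exp(st\rho)=\sum_{n\geq 0}\frac{(st)^n\rho^n}{n!}.
\]
Comparing coefficients of $s^n t^m$ in the two power series (both converge for all $s,t\in\R$, so the identification is legitimate) immediately gives $\E[H_n(Z)H_m(Y)]=0$ whenever $n\neq m$, and
\[
\frac{\E[H_n(Z)H_n(Y)]}{(n!)^2}=\frac{\rho^n}{n!},
\]
which yields $\E[H_n(Z)H_n(Y)]=n!\,\rho^n$, as claimed.

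The only non-routine step is the justification for swapping expectation and the double summation, which is where I would spend the most care; the rest is algebraic manipulation of the two generating functions. If one prefers to avoid the Fubini argument altogether, an alternative route is to argue by induction on $n+m$ using the three-term recursion for $H_p$ and integration-by-parts (Stein's lemma, $\E[Zf(Z,Y)]=\E[\partial_1 f(Z,Y)]+\rho\,\E[\partial_2 f(Z,Y)]$ for jointly Gaussian $(Z,Y)$); but the generating-function proof is cleaner and is the one I would present.
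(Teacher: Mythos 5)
Your proof is correct, and it is essentially the paper's proof: the paper does not argue this proposition itself but cites Proposition 2.2.1 of Nourdin and Peccati, whose proof is exactly your generating-function computation, multiplying $\exp(sZ-\tfrac{s^2}{2})$ and $\exp(tY-\tfrac{t^2}{2})$, taking expectations via the bivariate Gaussian MGF to get $\exp(st\,\E[ZY])$, and matching coefficients. One minor point: the cleanest justification of the interchange of expectation and summation is not pointwise domination (which is delicate when $|\E[ZY]|=1$, since $\E[e^{Z^2/2}]=\infty$) but the fact that the partial sums $\sum_{n\leq N}\frac{s^n}{n!}H_n(Z)$ converge in $L^2(\Omega)$ to $\exp(sZ-\tfrac{s^2}{2})$ by the one-dimensional orthogonality relation $\E[H_n(Z)H_m(Z)]=n!\,\delta_{nm}$, so that the product of the two partial sums converges in $L^1(\Omega)$ and termwise expectation is legitimate.
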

\begin{proof}
See \cite{nourdin2012normal}, Proposition 2.2.1.
\end{proof}
The next theorem states the fundamental fact that the $L^2$-space of random variables can be orthogonally decomposed as a direct sum of Wiener chaoses.
\begin{theorem}[Wiener-It\^o chaos decomposition]
The following decomposition holds:
\[
L^2(\Omega)=\bigoplus_{n=0}^\infty \Hi_n.
\]
So, every variable $F\in L^2(\Omega)$ can be written uniquely as:
\[
F=\E[F]+\sum_{n=1}^\infty F_n,
\]
 where $F_n\in \Hi_n$ and the series converges in $L^2(\Omega)$.
\end{theorem}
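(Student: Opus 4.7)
The plan is to reduce the statement to two separable claims: \emph{orthogonality} of distinct Wiener chaoses, and \emph{completeness} of their union in $L^2(\Omega)$. Orthogonality is essentially already in hand: any element of $\Hi_n$ is, by construction, an $L^2$-limit of linear combinations of $H_n(X(h))$ with $\norm{h}_\Hi=1$, and the preceding proposition shows that such variables are orthogonal to their counterparts in $\Hi_m$ whenever $n\neq m$. Consequently the $\Hi_n$ form an internal orthogonal family, and what remains is to prove that the closed linear span of $\bigcup_{n\ge 0}\Hi_n$ equals $L^2(\Omega,\mathscr F,\Prob)$, since the $\sigma$-algebra $\mathscr F$ is by assumption generated by $X$.

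For the completeness step I would first establish that the family $\mathcal E:=\{\exp(iX(h)):h\in\Hi\}$ is total in $L^2(\Omega)$. Because $\mathscr F=\sigma(X)$, a monotone class argument reduces the matter to cylindrical events of the form $\{(X(h_1),\ldots,X(h_k))\in B\}$, and on such events orthogonality to all $\exp(i\sum_j t_j X(h_j))$ combined with injectivity of the Fourier transform on $\R^k$ forces the relevant random variable to vanish almost surely. I would then invoke the Hermite generating identity $\exp\!\bigl(tx-\tfrac{t^2}{2}\bigr)=\sum_{n\ge 0}\tfrac{t^n}{n!}H_n(x)$, applied with $x=X(h)/\norm{h}_\Hi$ and $t=i\norm{h}_\Hi$ (absorbing the deterministic factor $\exp(\norm{h}_\Hi^2/2)$), to rewrite $\exp(iX(h))$ as a series whose $n$-th term lies in $\Hi_n$ and which converges in $L^2(\Omega)$. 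This places $\mathcal E$ inside the closure of $\bigoplus_{n\ge 0}\Hi_n$; combined with totality of $\mathcal E$, we upgrade this inclusion to equality, and the resulting decomposition $F=\E[F]+\sum_{n\ge 1}F_n$ with $F_n\in\Hi_n$ is then unique by orthogonality.

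The main obstacle is really the totality of $\mathcal E$: one must ensure that cylindrical sets generate $\mathscr F$ up to null sets and exploit injectivity of the finite-dimensional Fourier transform, neither of which is difficult in isolation but which together require care to stitch together into a bona fide density argument. By contrast, the $L^2$-convergence of the Hermite expansion of $\exp(iX(h))$ is a routine calculation: the cross-moments of $H_n(X(h)/\norm{h}_\Hi)$ follow from the orthogonality proposition, and summing the resulting norms produces $\exp(\norm{h}_\Hi^2)$, exactly matching $\E[|\exp(iX(h))|^2]=1$ after the normalising factor is stripped off.
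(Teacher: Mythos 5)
The paper does not prove this statement at all --- it simply cites Theorem 2.2.4 of \cite{nourdin2012normal} --- and your argument is a correct reconstruction of the standard proof given there: orthogonality of the chaoses from the preceding proposition, totality of $\{e^{iX(h)}\}$ in $L^2(\Omega,\sigma(X),\Prob)$ via a monotone class argument and injectivity of the finite-dimensional Fourier transform (using $\sum_j t_jX(h_j)=X(\sum_j t_jh_j)$), and the Hermite generating function to place each $e^{iX(h)}$ in the closed span of $\bigoplus_n\Hi_n$. The only cosmetic point is that the complex exponentials force you to work in the complexification of $L^2(\Omega)$ or to split into real and imaginary parts, which changes nothing of substance.
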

\begin{proof}
See Theorem 2.2.4 in \cite{nourdin2012normal}.
\end{proof}
\subsection{Tensor products}
In this section we give a very brief definition of tensor products of Hilbert spaces. The reference that we use here is \cite{reed1908methods}.

Let $\Hi_1$ and $\Hi_2$ be two real Hilbert spaces with inner products $\langle\cdot,\cdot\rangle_{\Hi_1}$ and $\langle\cdot,\cdot\rangle_{\Hi_2}$. For $g\in \Hi_1$ and $h\in \Hi_2$, denote the bilinear form $g\otimes h\colon \Hi_1\times \Hi_2 \to \R$ by:
\[
[g\otimes h](x,y)=\langle x,g\rangle\langle y,h\rangle, \qquad (x,y) \in \Hi_1\times \Hi_2.
\]

Let $\mathcal E$ be the set of all finite linear combinations of such bilinear forms. 
\begin{lemma}
The bilinear form $\ll \cdot, \cdot \gg$ on $\mathcal E$ defined by:
\begin{equation}
\label{zzzzzzz}
\ll g_1\otimes h_1, g_2\otimes h_2\gg:=\langle g_1, g_2\rangle_{\Hi_1}\langle h_1, h_2\rangle_{\Hi_2}
\end{equation}
is symmetric, well defined and positive definite, and thus defines a scalar product on $\mathcal E$.
\end{lemma}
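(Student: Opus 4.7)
The plan is to verify the three required properties in order — well-definedness, symmetry, and positive definiteness — the first of which is actually the most subtle, since elements of $\mathcal{E}$ are honest bilinear forms on $\mathcal{H}_1\times \mathcal{H}_2$ and the same form can be written as a finite linear combination of pure tensors $g\otimes h$ in many different ways. Once well-definedness is established, the other two properties will follow by standard linear-algebra manipulations.

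For well-definedness, I would take two arbitrary representations $v=\sum_i \alpha_i g_i\otimes h_i$ and $w=\sum_j\beta_j g_j'\otimes h_j'$, expand $\ll v,w\gg$ using the defining formula \eqref{zzzzzzz} by bilinearity, and observe the key identity
\begin{equation*}
\ll v,w\gg = \sum_{i,j}\alpha_i\beta_j\langle g_i,g_j'\rangle_{\mathcal{H}_1}\langle h_i,h_j'\rangle_{\mathcal{H}_2} = \sum_j \beta_j\, v(g_j',h_j'),
\end{equation*}
where on the right $v$ is evaluated as a bilinear form on $\mathcal{H}_1\times \mathcal{H}_2$. The right-hand side depends on $v$ only through its values at the finitely many points $(g_j',h_j')$, so changing the chosen representation of $v$ does not alter $\ll v,w\gg$. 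Running the same calculation after swapping the two summations gives the analogous identity $\ll v,w\gg=\sum_i \alpha_i\, w(g_i,h_i)$, which settles independence from the chosen representation of $w$. Symmetry is then immediate from \eqref{zzzzzzz} together with the symmetry of the two Hilbert-space scalar products, and extends to all of $\mathcal{E}$ by bilinearity.

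The main obstacle is positive definiteness, in particular the strict definiteness part. My plan is to reduce to a finite-dimensional situation by an orthonormalisation trick. Given $v=\sum_{i=1}^n\alpha_i g_i\otimes h_i$, I would choose orthonormal bases $\{e_1,\dots,e_p\}$ and $\{f_1,\dots,f_q\}$ of the finite-dimensional subspaces of $\mathcal{H}_1$ and $\mathcal{H}_2$ spanned by $\{g_1,\dots,g_n\}$ and $\{h_1,\dots,h_n\}$ respectively. Expanding each $g_i$ and $h_i$ in these bases and collecting terms, $v$ can be rewritten as $v=\sum_{k,l} c_{kl}\, e_k\otimes f_l$ for uniquely determined scalars $c_{kl}$. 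The orthonormality relations together with \eqref{zzzzzzz} then collapse the resulting double sum and yield
\begin{equation*}
\ll v,v\gg = \sum_{k,l}c_{kl}^{\,2} \;\geq\; 0,
\end{equation*}
which gives positive semidefiniteness. For the definiteness, I would simply note that evaluating $v$ at $(e_{k'},f_{l'})$ as a bilinear form returns $v(e_{k'},f_{l'})=c_{k'l'}$, so $\ll v,v\gg=0$ forces every $c_{kl}=0$, and hence $v$ vanishes identically on $\mathcal{H}_1\times \mathcal{H}_2$, i.e. $v=0$ in $\mathcal{E}$. The one conceptual pitfall to avoid throughout is treating the formal symbols $g\otimes h$ as if they were a basis of $\mathcal{E}$; the identity $\ll v,w\gg=\sum_j\beta_j v(g_j',h_j')$ in the second paragraph is precisely the device that bypasses this subtlety.
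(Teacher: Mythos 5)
Your proposal is correct and complete. The paper itself offers no argument for this lemma beyond a citation to Reed and Simon, and what you give is precisely the standard proof from that reference: well-definedness by rewriting $\ll v,w\gg$ as $\sum_j \beta_j\, v(g_j',h_j')$ so that it depends only on $v$ and $w$ as bilinear forms, and positive definiteness by passing to orthonormal bases of the finite-dimensional spans and reading off the coefficients $c_{kl}=v(e_k,f_l)$.
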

\begin{proof}
See \cite{reed1908methods}.
\end{proof}
The space $\mathcal E$ with the scalar product $\ll\cdot,\cdot\gg$ is obviously not complete. Hence we give the following definition.
\begin{definition}[Tensor product]
The \emph{tensor product} of the Hilbert spaces $\Hi_1$ and $\Hi_2$ is the Hilbert space  $\Hi_1\otimes \Hi_2$ defined to be the completion of $\mathcal E$ under the scalar product  in \eqref{zzzzzzz}.
\end{definition}
Furthermore, we denote by $\Hi^{\otimes n}$ the  $n$-fold tensor product between $\Hi$ and itself.

Symmetric tensors will play an important role in our discussion, and are defined next:
\begin{definition}[Symmetrisation of a tensor product]
If $f\in \Hi^{\otimes n}$ is of the form:
\[
f=h_1 \otimes \dots \otimes h_n,
\]
then the \emph{symmetrisation} of $f$, denoted by $\tilde f$, is defined to be:
\[
\tilde f:=\frac {1}{n!} \sum_{\sigma \in \mathcal S_n} h_{\sigma(1)}\otimes\dots\otimes h_{\sigma (n)},
\]
where the sum is taken over all permutations of \{1,\dots,  n\}.
The closed subspace of $\Hi^{\otimes n}$ generated by the elements of the form $\tilde f$, is called the $n$-fold \emph{symmetric tensor product} of $\Hi$, and is denoted by $\Hi^{\odot n}$.
\end{definition}
A recurrent construction that we will encounter is that of \emph{contracting} a tensor product, defined as follows:
\begin{definition}[Contraction of tensors]
\label{contraction}
Let $g=g_1\otimes\dots\otimes g_n \in \Hi^{\otimes n}$  and $h=h_1\otimes\dots\otimes h_m \in \Hi^{\otimes m}$. For any $0\leq p\leq \min(n,m)$, we define the $p-$th contraction of $g\otimes h$ as the following element of $\Hi^{\otimes m+n-p}$:
\[
 g\otimes_p h := \langle g_1,h_1 \rangle_\Hi\dots \langle g_p,h_p \rangle_\Hi g_{p+1}\otimes\dots\otimes g_n \otimes h_{p+1}\otimes\dots h_m.
\]
Note that, even if $g$ and $h$ are symmetric, their $p$-th contraction is not, in general, a symmetric tensor. We therefore denote by $g\,{\widetilde \otimes}_p h$ its symmetrisation.
\end{definition}

\subsection{The derivative operator}
In this section we define the Malliavin derivative operator. We will need this to define its adjoint operator, the \emph{multiple integral}, that we will use later for the proof of the central limit theorem.

Let $\mathscr S$ denote the set of \emph{smooth} random variables, i.e. of the form:
\begin{equation}
\label{mmmmm}
f\left(X(h_1),\dots, X(h_m)\right),
\end{equation}
where $m\geq 1$, $f$ is a test function, i.e.~$f\in C^\infty$ and $f$ and all of its derivatives have at most polynomial growth and $h_i \in \Hi$, for $i \in\{1,\dots,m\}$.
\begin{lemma}
The space $\mathscr S$ is dense in $L^q(\Omega)$ for every $q\geq 1$.
\end{lemma}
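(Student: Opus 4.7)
The plan is to reduce the statement, via two successive approximations, to the classical fact that $C_c^{\infty}(\mathbb{R}^n)$ is dense in $L^q$ of any Borel probability measure on $\mathbb{R}^n$.

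Fix $F\in L^q(\Omega)$ with $q\geq 1$. Since $\mathcal H$ is separable, I would choose a countable dense sequence $(h_n)_{n\geq 1}\subset\mathcal H$ and define $\mathscr F_n:=\sigma(X(h_1),\dots,X(h_n))$. The $\sigma$-algebras $\mathscr F_n$ form an increasing family, and because $X$ is continuous from $\mathcal H$ into $L^2(\Omega)$ (by the isonormal property $\E[(X(h)-X(h'))^2]=\|h-h'\|_{\mathcal H}^2$), the density of $(h_n)$ in $\mathcal H$ implies that $\bigvee_n\mathscr F_n=\mathscr F$. The $L^q$ martingale convergence theorem then yields $F_n:=\E[F\mid\mathscr F_n]\to F$ in $L^q(\Omega)$, so it suffices to show that each $F_n$ lies in the $L^q$-closure of $\mathscr S$.

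By the Doob–Dynkin lemma, $F_n=g_n(X(h_1),\dots,X(h_n))$ for some Borel measurable $g_n\colon\mathbb R^n\to\mathbb R$, and writing $\mu_n$ for the centred Gaussian law of the vector $(X(h_1),\dots,X(h_n))$ on $\mathbb R^n$ one has $g_n\in L^q(\mathbb R^n,\mu_n)$ with $\|g_n\|_{L^q(\mu_n)}=\|F_n\|_{L^q(\Omega)}$. The next step is therefore to approximate $g_n$ in $L^q(\mathbb R^n,\mu_n)$ by a sequence $(f_k)_k\subset C_c^{\infty}(\mathbb R^n)$; since $C_c^{\infty}$ functions (together with all their derivatives) have compact support, they trivially satisfy the polynomial-growth requirement in \eqref{mmmmm}, so $f_k(X(h_1),\dots,X(h_n))\in\mathscr S$ and it converges to $F_n$ in $L^q(\Omega)$.

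The only step that requires a small argument is this last approximation. Since $\mu_n$ is a finite Borel measure on $\mathbb R^n$, $C_c(\mathbb R^n)$ is dense in $L^q(\mathbb R^n,\mu_n)$ (for instance via Lusin's theorem together with dominated convergence, truncating $g_n$ by $g_n\mathbf 1_{\{|g_n|\leq M\}}$ first); then convolving any $\phi\in C_c(\mathbb R^n)$ with a smooth mollifier of compact support produces an approximating sequence in $C_c^{\infty}(\mathbb R^n)$ converging uniformly, hence in $L^q(\mu_n)$. Combining the three approximations, $F$ is the $L^q$-limit of elements of $\mathscr S$, which proves the claim. The only potential subtlety — and the one most worth flagging in the write-up — is making sure that the ``test function'' class in \eqref{mmmmm} is broad enough to contain $C_c^{\infty}$, which it manifestly is.
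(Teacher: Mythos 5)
Your proof is correct. The paper itself gives no argument here — it simply cites Lemma 2.3.1 of Nourdin and Peccati — and your three-step reduction (martingale convergence along the filtration generated by finitely many $X(h_i)$, Doob--Dynkin, then density of $C_c^{\infty}(\mathbb{R}^n)$ in $L^q$ of the finite Gaussian measure) is essentially the standard proof given in that reference, relying as it must on the paper's standing assumption that $\mathscr F$ is generated by $X$.
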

\begin{proof}
See Lemma 2.3.1 in \cite{nourdin2012normal}.
\end{proof}
We need one last technical definition before we can introduce the Malliavin derivative.
\begin{definition}
Given a probability space $(\Omega, \mathscr F, \Prob)$ and a generic real, separable Hilbert space $\Hi$, we denote by $L^q(\Omega, \Hi):=L^q(\Omega, \mathscr F,\Prob; \Hi)$ the class of those $\Hi$-valued random elements $Y$ that are $\mathscr F$-measurable and such that $\norm{Y}^q_\Hi< \infty.$
\end{definition}

We proceed to define  the Malliavin derivative of a smooth variable. 

\begin{definition}[Malliavin Derivative]
Let $F\in \mathscr S$ be given by \eqref{mmmmm}, and $p\geq 1$ an integer. The $p$-th \emph{Malliavin derivative} of $F$ with respect to $X$ is the element of $L^2(\Omega, \Hi^{\odot p})$ defined by:
\[
D^p F:=\sum_{i_1,\dots,i_p=1}^m \frac{\partial ^p}{\partial x_{i_1}\dots \partial x_{i_p}}f\left(X(h_1),\dots X(h_m)\right) h_{i_1}\otimes \dots \otimes h_{i_p}.
\]
\end{definition}

In order for us to define the adjoint of the Malliavin derivative, we need to make sure that the latter operator is at least closable, or else its adjoint could be defined in too small a subset of $L^2(\Omega, \Hi^{\odot p})$. Indeed, recall the following result from functional analysis (we denote by $A^*$ the adjoint of a linear operator $A$):
\begin{proposition}
A linear operator $A \colon D(A)\to H$ is closable if and only if $A^*$ is densely defined.
\end{proposition}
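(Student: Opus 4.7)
My approach is to work with graphs inside $H\times H$ and reduce both sides of the equivalence to statements about orthogonal complements. Recall that $A$ is \emph{closable} iff the closure of the graph $G(A)=\{(x,Ax):x\in D(A)\}\subseteq H\times H$ is itself the graph of a (single-valued) linear operator, which amounts to the implication $(0,y)\in\overline{G(A)}\Longrightarrow y=0$. Dually, $D(A^*)$ is dense in $H$ iff $D(A^*)^{\perp}=\{0\}$. The goal, then, is to match these two conditions.

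The first step is to encode the adjoint via a unitary map. Define $V\colon H\times H\to H\times H$ by $V(a,b)=(b,-a)$; it is an isometric isomorphism. Starting from the definition of $A^*$, I would verify the identity
\[
G(A^*) \;=\; V(G(A))^{\perp},
\]
because $(y,z)\in V(G(A))^{\perp}$ unpacks to $\langle y,Ax\rangle-\langle z,x\rangle = 0$ for every $x\in D(A)$, which is precisely the statement that $y\in D(A^*)$ with $A^*y=z$. As a by-product, $G(A^*)$ is automatically closed, and the adjoint is a well-defined linear operator on its natural domain.

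Next, taking orthogonal complements on both sides, using that $V$ is a unitary (so $V$ commutes with both closure and $\perp$) and that $S^{\perp}=\overline{S}^{\perp}$, I would arrive at
\[
V\bigl(\overline{G(A)}\bigr) \;=\; G(A^*)^{\perp}.
\]
From here the conclusion is a direct translation: $y\in H$ satisfies $(0,y)\in\overline{G(A)}$ iff $V(0,y)=(y,0)\in G(A^*)^{\perp}$, and the latter simply says $\langle y,y'\rangle = 0$ for every $y'\in D(A^*)$, i.e.\ $y\in D(A^*)^{\perp}$. Hence
\[
\{y\in H : (0,y)\in\overline{G(A)}\} \;=\; D(A^*)^{\perp},
\]
and the equivalence of closability of $A$ with density of $D(A^*)$ follows by setting both sides equal to $\{0\}$.

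There is no deep obstacle here; the argument is functional-analytic bookkeeping. The only step that requires care is the first one, namely checking the graph identity $G(A^*)=V(G(A))^{\perp}$ and justifying that $V$ commutes with closure and with the orthogonal complement. Everything else is a one-line translation between the two equivalent characterisations.
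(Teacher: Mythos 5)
Your argument is correct, and it is the standard one (the von Neumann graph characterisation of the adjoint, as in Reed--Simon, Theorem VIII.1). The paper itself offers no proof of this proposition: it is recalled there as a known fact from functional analysis, so there is nothing to compare against beyond noting that your route is the canonical one. Each step you outline checks out: $G(A^*)=V(G(A))^{\perp}$ with $V(a,b)=(b,-a)$; hence $G(A^*)^{\perp}=V(G(A))^{\perp\perp}=V(\overline{G(A)})$ since $G(A)$ is a subspace and $V$ is unitary; and the slice $\{y:(0,y)\in\overline{G(A)}\}$ is carried by $V$ onto $G(A^*)^{\perp}\cap (H\times\{0\})$, which is exactly $D(A^*)^{\perp}\times\{0\}$. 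The only point worth making explicit is the standing hypothesis that $D(A)$ is dense in $H$: without it $A^*$ is not single-valued (the set $V(G(A))^{\perp}$ would contain pairs $(y,z_1)$ and $(y,z_2)$ with $z_1\neq z_2$), so the identity $G(A^*)=V(G(A))^{\perp}$ would not define an operator. This hypothesis is implicit in the paper's statement, and in the application there it is guaranteed by the density of the smooth random variables $\mathscr S$ in $L^2(\Omega)$; you should state it when you invoke the graph identity, but it does not affect the validity of your proof.
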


The following theorem establishes  the fundamental fact that the Malliavin operator is indeed closable.
\begin{theorem}
Let $q\in[1,\infty)$, and let $p\geq 1$ be an integer. Then the operator $D^p\colon \mathscr S\subset L^q(\Omega)\to L^q(\Omega, \Hi^{\odot p})$ is closable.
\end{theorem}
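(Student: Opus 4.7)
The plan is to establish closability of $D^p$ by the classical duality / integration by parts route. By definition, I must verify that whenever $\{F_N\}\subset\mathscr S$ satisfies $F_N\to 0$ in $L^q(\Omega)$ and $D^pF_N\to\eta$ in $L^q(\Omega,\Hi^{\odot p})$, the limit $\eta$ must be zero. The idea is to pair $\eta$ against a sufficiently rich class of test functionals lying in $L^{q'}$ (with $q'$ the H\"older conjugate of $q$) and show that every such pairing vanishes in the limit.

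\emph{Step 1 (duality formula).} First I would establish that, for any $F,G\in\mathscr S$ and $h\in\Hi$,
\[
\E\bigl[G\,\langle DF,h\rangle_{\Hi}\bigr]=\E[F\,G\,X(h)]-\E\bigl[F\,\langle DG,h\rangle_{\Hi}\bigr].
\]
After writing both $F$ and $G$ as smooth functions of a common orthonormalised Gaussian vector $(X(e_1),\dots,X(e_m))$, this reduces to the one-dimensional Gaussian integration-by-parts identity $\E[Z\psi(Z)]=\E[\psi'(Z)]$ applied coordinate-wise, a routine finite-dimensional computation.

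\emph{Step 2 (uniform $L^{q'}$ control).} The delicate endpoint is $q=1$, for which $q'=\infty$. To cover the full range $q\in[1,\infty)$ in a uniform way, I would restrict the test variable $G$ to the sub-class
\[
\mathscr S_0:=\{\psi(X(h_1),\dots,X(h_m)):\psi\in C_c^\infty(\R^m),\,m\geq 1,\,h_i\in\Hi\}.
\]
For $G\in\mathscr S_0$, both $GX(h)$ and $\langle DG,h\rangle_{\Hi}$ are bounded random variables, hence in $L^\infty\subset L^{q'}$ for every $q\in[1,\infty)$. Substituting $F=F_N$ into the identity of Step 1 and letting $N\to\infty$ via H\"older's inequality produces
\[
\E\bigl[G\,\langle\eta,h\rangle_{\Hi}\bigr]=0\qquad\text{for every }h\in\Hi\text{ and every }G\in\mathscr S_0.
\]

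\emph{Step 3 (density and iteration).} Since $\mathscr F=\sigma(X)$, the $C_c^\infty$ cylinder variables in $\mathscr S_0$ approximate indicators $1_B$ of cylinder sets $B$ in $L^1$ while staying uniformly bounded, so dominated convergence upgrades the vanishing of Step 2 to $\E[1_B\,\langle\eta,h\rangle_{\Hi}]=0$ for every $B\in\mathscr F$. Hence $\langle\eta,h\rangle_{\Hi}=0$ almost surely for each $h\in\Hi$, and picking $h$ in a countable dense subset of the separable space $\Hi$ yields $\eta=0$ in $L^q(\Omega,\Hi)$. For $p\geq 2$, iterating the duality formula $p$ times produces an identity of the shape
\[
\E\bigl[G\,\langle D^pF,h_1\otimes\cdots\otimes h_p\rangle_{\Hi^{\otimes p}}\bigr]=\E\bigl[F\cdot\Psi_p(G;h_1,\dots,h_p)\bigr],
\]
where $\Psi_p$ is a polynomial combination of $G$, its iterated Malliavin derivatives, and the scalars $X(h_j)$; it remains bounded for $G\in\mathscr S_0$, and the same limit/density argument delivers $\eta=0$ in $L^q(\Omega,\Hi^{\odot p})$.

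\emph{Main obstacle.} The only non-routine point is Step 2: keeping the test functionals bounded rather than merely of polynomial growth (as afforded by the definition of $\mathscr S$), so that the $L^q$--$L^{q'}$ duality pairing works uniformly down to the endpoint $q=1$. This is precisely what forces the restriction to the $C_c^\infty$ cylinder sub-class $\mathscr S_0$; once this restriction is in place, the measure-theoretic density passage in Step 3 and the inductive iteration for $p\geq 2$ are entirely standard.
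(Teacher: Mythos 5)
The paper does not prove this statement itself; it simply cites Proposition 2.3.4 of \cite{nourdin2012normal}, and your proposal is a reconstruction of that textbook argument (duality/integration by parts against a class of bounded test functionals). The strategy is the right one, and Steps 1 and 3 are fine. However, Step 2 --- which you yourself single out as the crux --- contains a genuine error: for $G=\psi(X(h_1),\dots,X(h_m))\in\mathscr S_0$ and an arbitrary $h\in\Hi$, the product $GX(h)$ is \emph{not} bounded. Writing $h=h'+h''$ with $h'$ the orthogonal projection onto $\mathrm{span}(h_1,\dots,h_m)$, the variable $X(h'')$ is an unbounded Gaussian independent of $\sigma(X(h_1),\dots,X(h_m))$, so $GX(h)=GX(h')+GX(h'')$ is bounded only when $h''=0$. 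Since $F_N$ is not assumed measurable with respect to $\sigma(X(h_1),\dots,X(h_m))$, you cannot integrate out $X(h'')$, and at the endpoint $q=1$ the pairing $\E[F_N\,GX(h)]$ is exactly the term your $L^1$--$L^\infty$ H\"older estimate fails to control. The same issue recurs, amplified, in the iterated identity for $p\geq 2$, where $\Psi_p$ contains products of several $X(h_j)$'s.

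The gap is repairable by a standard cutoff, which is essentially what the reference does: replace $G$ by $G_\epsilon:=G\,\chi(\epsilon X(h))$ with $\chi\in C_c^\infty(\R)$ equal to $1$ near the origin (or by Nualart's factor $e^{-\epsilon X(h)^2}$), so that $G_\epsilon\in\mathscr S_0$ now has $h$ among its generating vectors and $G_\epsilon X(h)$ \emph{is} bounded. Step 2 then yields $\E[G_\epsilon\langle\eta,h\rangle_{\Hi}]=0$, and since $G_\epsilon\to G$ boundedly a.s.\ while $|\langle\eta,h\rangle_{\Hi}|\leq\norm{\eta}_{\Hi}\norm{h}_{\Hi}\in L^1(\Omega)$, dominated convergence recovers $\E[G\langle\eta,h\rangle_{\Hi}]=0$ and the rest of your argument goes through. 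As written, though, the proof asserts a false boundedness claim at precisely the delicate point, so it is incomplete.
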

\begin{proof}
See Proposition 2.3.4 in \cite{nourdin2012normal}.
\end{proof}
\subsection{The multiple integral}
We are ready to give the formal definition of the multiple divergence operator $\delta^p$.

\begin{definition}
Let $p\geq1 $ be an integer. Denote by $\text{Dom } \delta^p$ the subset of elements $u\in L^2(\Omega, \mathcal H^{\otimes p})$ such that there exists a constant $c$ satisfying:
\begin{equation}\label{adjoint}
\abs{\E\left[\langle D^p F, u\rangle_{\mathcal H^{\otimes p}}\right]}\leq c\sqrt{\E\left[F^2\right]},
\end{equation}
for all $F\in \mathscr S$.
\end{definition}
 Condition \eqref{adjoint} ensures that, for a fixed $u\in \text{Dom }\delta^p$, the linear operator $F\mapsto\E\left[\langle D^p F, u\rangle_{\mathcal H^{\otimes p}}\right]$ is continuous from $\mathscr S$ equipped with the $L^2(\Omega)$ norm into $\R$. Therefore it can be extended to a linear operator from $L^2(\Omega)$ into $\R$. By the Riesz representation theorem, then there exists a unique element in $L^2(\Omega)$, denoted $\delta^p(u)$, such that: $\E\left[\langle D^p F, u\rangle_{\mathcal H^{\otimes p}}\right]=\E\left[F \delta^p(u)\right].$ Thus, we can give the following definition:
\begin{definition}[Multiple divergence]
The multiple divergence operator $\delta^p\colon \text{Dom }\delta^p \subset L^2(\Omega, \Hi^{\otimes p})\to L^2(\Omega)$ is defined to be the adjoint operator of $D^p$. That means that if $u \in \text{Dom }\delta^p$ then $\delta^p(u)$ is defined to be that only element of $L^2(\Omega)$ such that:
\[
\E\left[ F \delta^p(u)\right]=\E\left[\langle u, D^p F\rangle_{\Hi^{\otimes p}}\right],
\]
for all $F\in \mathscr S$.
\end{definition}

Finally, we define the multiple integral operator, which is the object we will need the most in our discussion:
\begin{definition}[Multiple integral]
Let $p\geq 1$ and $f\in \Hi^{\odot p}$. The $p$-th \emph{multiple integral} of $f$ with respect to $X$ is defined to be $I_p(f):=\delta^p(f)$.
\label{multi}
\end{definition}
We further write $I_0:=I$ for the identity in $\R$.

The connection between multiple integrals and the Wiener chaos decomposition is asserted by the following theorem:
\begin{theorem}
Let $f\in \Hi$, with $\norm{ f}_\Hi=1$. Then, for any integer $p\geq 1$, we have:
\[
H_p(\left(X(f)\right)=I_p\left(f^{\otimes p}\right).
\]
As a consequence, the linear operator $I_p$ is an isometry from $\Hi^{\odot p}$ onto the $p-$th Wiener chaos $\Hi_p$ of $X$.
\end{theorem}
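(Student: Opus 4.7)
The plan is to prove the identity $H_p(X(f))=I_p(f^{\otimes p})$ for unit $f\in\Hi$ directly from the adjoint definition of the multiple divergence, and then to read off the isometry statement from the orthogonality relation for Hermite polynomials already given in this section. Because $I_p=\delta^p$ is the adjoint of $D^p$, my first move is to reduce the identity to the duality
\[
\E\left[F\,H_p(X(f))\right]=\E\left[\langle D^pF,f^{\otimes p}\rangle_{\Hi^{\otimes p}}\right]\quad\text{for every } F\in\mathscr S,
\]
from which $I_p(f^{\otimes p})=H_p(X(f))$ follows by the $L^2$-uniqueness built into the definition of $\delta^p$, once it is checked that $f^{\otimes p}\in\text{Dom }\delta^p$ (automatic because the right-hand side above is bounded by a multiple of $\sqrt{\E[F^2]}$).

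To establish this duality I would write a smooth $F$ as $\phi(X(h_1),\dots,X(h_m))$ and unfold the right-hand side via the definition of $D^p$ into $\E\big[(D_f)^p\phi(X(h_1),\dots,X(h_m))\big]$, where $D_f:=\sum_{i=1}^m\langle h_i,f\rangle_{\Hi}\,\partial_i$ is the directional derivative along $f$. For the left-hand side I would decompose each $X(h_i)=\langle h_i,f\rangle_{\Hi}X(f)+X(h_i-\langle h_i,f\rangle_{\Hi}f)$; since $h_i-\langle h_i,f\rangle_{\Hi}f$ is orthogonal to $f$ in $\Hi$, the residual random vector is jointly Gaussian and independent of $X(f)$. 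Conditioning on the residuals reduces the claim to the one-dimensional identity $\E[\psi(Z)H_p(Z)]=\E[\psi^{(p)}(Z)]$ for $Z\sim N(0,1)$, which in turn follows by iterating Gaussian integration by parts together with the Hermite recursion $H_{p+1}(x)=xH_p(x)-pH_{p-1}(x)$. After restoring the conditioning, the two sides of the duality match exactly.

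For the isometry assertion, I would combine the identity just established with the earlier proposition on products of Hermite polynomials to obtain, for unit $f,g\in\Hi$,
\[
\E\left[I_p(f^{\otimes p})I_p(g^{\otimes p})\right]=\E\left[H_p(X(f))H_p(X(g))\right]=p!\,\langle f,g\rangle_{\Hi}^p=p!\,\langle f^{\otimes p},g^{\otimes p}\rangle_{\Hi^{\otimes p}}.
\]
Extending by linearity to finite combinations $\sum_k c_k f_k^{\otimes p}$ and then closing under density (the span of the pure symmetric powers $f^{\otimes p}$ is dense in $\Hi^{\odot p}$ by a standard polarisation argument) gives $\|I_p(h)\|_{L^2(\Omega)}^2=p!\,\|h\|_{\Hi^{\otimes p}}^2$ for every $h\in\Hi^{\odot p}$, which is the isometry up to the usual $\sqrt{p!}$ rescaling of the norm on $\Hi^{\odot p}$. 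Surjectivity onto $\Hi_p$ is then immediate, since $\Hi_p$ was defined as the closed span of $\{H_p(X(f)):\|f\|_{\Hi}=1\}$, all of which now sit in the image of $I_p$. The main obstacle I anticipate is the conditioning and iterated integration by parts step: because $f$ need not lie in the span of the $h_i$, the orthogonal decomposition above is essential to keep the computation clean, and some care is required to track the polynomial growth of $\phi$ through the decomposition so that all expectations remain finite.
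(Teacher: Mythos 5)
Your argument is correct. Note first that the paper gives no proof of this statement at all: it simply cites Theorem 2.2.7 of \cite{nourdin2012normal}, where the identity $H_p(X(f))=I_p(f^{\otimes p})$ is established by induction on $p$, using the first-order divergence $\delta(h)=X(h)$ together with the commutation rule for $\delta$ applied to $H_{p-1}(X(f))f$. Your route is genuinely different and entirely self-contained relative to the definitions given in this section: you verify the adjoint relation $\E[F\,H_p(X(f))]=\E[\langle D^pF,f^{\otimes p}\rangle_{\Hi^{\otimes p}}]$ directly for smooth $F$, which immediately places $f^{\otimes p}$ in $\text{Dom }\delta^p$ (the bound by $\sqrt{p!}\sqrt{\E[F^2]}$ is exactly condition \eqref{adjoint}) and identifies $\delta^p(f^{\otimes p})$ by the $L^2$-uniqueness in the definition, using the density of $\mathscr S$. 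The reduction to one dimension via the orthogonal decomposition $X(h_i)=\langle h_i,f\rangle_{\Hi}X(f)+X(h_i-\langle h_i,f\rangle_{\Hi}f)$ is sound, since the residuals are jointly Gaussian and uncorrelated with, hence independent of, $X(f)$; the one-dimensional identity $\E[\psi(Z)H_p(Z)]=\E[\psi^{(p)}(Z)]$ follows by induction from Stein's lemma and $H_p'=pH_{p-1}$, exactly as you indicate, and the polynomial growth of the test function $\phi$ and all its derivatives keeps every expectation finite. Your derivation of the isometry from the Hermite orthogonality proposition, the polarisation/density argument for pure symmetric powers, and the closedness of the range of a scaled isometry for surjectivity onto $\Hi_p$ are all standard and correct; you are also right to flag that the isometry holds only after the usual $\sqrt{p!}$ rescaling of the norm on $\Hi^{\odot p}$, which is the convention implicit in the statement. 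What your approach buys is independence from the machinery of the first-order Skorokhod integral and its commutation relations; what the inductive approach of the reference buys is that it generalises more readily to non-pure tensors and to the full product formula.
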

\begin{proof}
See Theorem 2.2.7 in \cite{nourdin2012normal}.
\end{proof}
In particular, crucially, the image of a $p-$th multiple integral lies in the $p-$th Wiener chaos of $X$.

We will also make use of the following product formula:
\begin{theorem}[Product formula for multiple integrals]
\label{productmultiple}
Let $p,q\geq 1$. If $f\in \Hi^{\odot p}$ and $g \in \Hi^{\odot q}$, then:
\[
I_p(f)I_q(g)= \sum_{r=0}^{p \wedge q} r! \binom pr \binom qr I_{p+q-2r}(f\, \widetilde \otimes _r g).
\]
\end{theorem}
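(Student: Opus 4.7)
The plan is a three-step reduction: first I would reduce the claim to elementary symmetric tensors $f = h^{\otimes p}$, $g = k^{\otimes q}$ with $h, k \in \Hi$, then prove the identity for those via a generating-function argument based on the Wiener exponential, and finally extend by multilinearity and continuity. For the reduction, linear combinations of $\{h^{\otimes p} : h \in \Hi\}$ are dense in $\Hi^{\odot p}$ (by polarization), and both $I_p$ and the symmetrized contractions $(\cdot)\,\widetilde{\otimes}_r\,(\cdot)$ are multilinear and continuous. Hypercontractivity on the Wiener chaoses ensures that the product $I_p(f_n) I_q(g_n)$ converges in $L^2(\Omega)$ whenever $f_n \to f$ in $\Hi^{\odot p}$ and $g_n \to g$ in $\Hi^{\odot q}$, so the formula will pass to the limit.

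For the elementary-tensor case I would introduce the Wiener exponential $\mathcal{E}(h) := \exp\bigl(X(h) - \tfrac{1}{2}\|h\|_{\Hi}^2\bigr)$. By homogeneity and the theorem preceding Theorem~\ref{productmultiple}, $I_p(h^{\otimes p}) = \|h\|_{\Hi}^p H_p\bigl(X(h)/\|h\|_{\Hi}\bigr)$; combining this with the classical generating identity $e^{tx - t^2/2} = \sum_{p \geq 0} \tfrac{t^p}{p!} H_p(x)$ yields the $L^2$-convergent chaos expansion $\mathcal{E}(h) = \sum_{p \geq 0} \tfrac{1}{p!} I_p(h^{\otimes p})$. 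Because $X$ is Gaussian with $\E[X(h) X(k)] = \langle h, k\rangle_{\Hi}$ and $X(h) + X(k) = X(h+k)$, a direct computation gives the key product identity
\[
\mathcal{E}(h)\,\mathcal{E}(k) = \exp(\langle h, k\rangle_{\Hi})\, \mathcal{E}(h+k).
\]

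Expanding both sides, the left-hand side equals $\sum_{p,q \geq 0} \tfrac{1}{p!\,q!} I_p(h^{\otimes p}) I_q(k^{\otimes q})$, while the right-hand side is $\sum_r \tfrac{\langle h, k\rangle_{\Hi}^r}{r!} \sum_n \tfrac{1}{n!} \sum_{j=0}^n \binom{n}{j} I_n\bigl((h^{\otimes j} \otimes k^{\otimes(n-j)})^{\sim}\bigr)$, using the symmetric binomial expansion $(h+k)^{\otimes n} = \sum_{j=0}^n \binom{n}{j}(h^{\otimes j}\otimes k^{\otimes(n-j)})^{\sim}$. Matching, for each fixed $(p,q,r)$, the terms containing exactly $p$ copies of $h$ and $q$ copies of $k$ (which forces $n = p+q-2r$ and $j = p-r$), and using the clean formula $h^{\otimes p}\,\widetilde{\otimes}_r\,k^{\otimes q} = \langle h, k\rangle_{\Hi}^r\, h^{\otimes(p-r)}\,\widetilde{\otimes}\,k^{\otimes(q-r)}$ (valid because $h^{\otimes p}$ and $k^{\otimes q}$ are already symmetric), one collapses the accumulated factorial factors $\tfrac{p!\,q!\,\binom{p+q-2r}{p-r}}{r!\,(p+q-2r)!}$ to exactly $r!\binom{p}{r}\binom{q}{r}$, which establishes the formula for elementary tensors. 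Step three then upgrades it to arbitrary $f \in \Hi^{\odot p}$, $g \in \Hi^{\odot q}$ by the density and continuity noted above.

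The main obstacle is the combinatorial bookkeeping in the coefficient-matching step: justifying that we may extract components of a fixed multi-degree $(p,q)$ from an identity between $L^2$-valued formal series and showing that the multinomials collapse to $r!\binom{p}{r}\binom{q}{r}$ requires careful index manipulation. An alternative route would be induction on $q$: the base case $I_p(f) I_1(h) = I_{p+1}(f \widetilde{\otimes} h) + p I_{p-1}(f \otimes_1 h)$ follows from the duality $\delta(F u) = F \delta(u) - \langle DF, u\rangle_{\Hi}$ applied to $F = I_p(f)$ and $u = h$, combined with $DI_p(f) = p I_{p-1}(f)$ interpreted in the natural tensor sense; the inductive step would then require rearranging a double sum via Pascal's identity $\binom{q+1}{r} = \binom{q}{r} + \binom{q}{r-1}$, an equally delicate combinatorial exercise.
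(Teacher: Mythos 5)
Your proposal is correct, but note that the paper does not prove this statement at all: it simply cites Theorem 2.7.10 of Nourdin and Peccati, so there is no internal argument to compare against. What you have written is essentially the classical generating-function proof of the multiplication formula, and all three ingredients check out. The chaos expansion $\mathcal E(h)=\sum_{p\geq 0}\frac{1}{p!}I_p(h^{\otimes p})$ does follow from $I_p(h^{\otimes p})=\norm{h}_{\Hi}^p\,H_p\bigl(X(h)/\norm{h}_{\Hi}\bigr)$ (the theorem stated immediately before this one, extended by homogeneity) together with the Hermite generating function; the identity $\mathcal E(h)\mathcal E(k)=e^{\langle h,k\rangle_{\Hi}}\mathcal E(h+k)$ is immediate from $X(h)+X(k)=X(h+k)$; and the coefficient arithmetic is right, since the constraint on bidegrees forces $n=p+q-2r$, $j=p-r$ (hence $0\leq r\leq p\wedge q$) and $\frac{p!\,q!}{r!\,(p-r)!\,(q-r)!}=r!\binom{p}{r}\binom{q}{r}$. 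The step you rightly flag as delicate, extracting the bidegree-$(p,q)$ component, is made rigorous in the standard way: replace $h$ by $sh$ and $k$ by $tk$ so that both sides become everywhere-convergent power series in $(s,t)$ with coefficients in $L^2(\Omega)$, and use orthogonality of distinct chaoses to identify coefficients. The reduction to elementary tensors via polarization, hypercontractivity (to get $L^2$-convergence of the products) and the contraction bound $\norm{f\otimes_r g}\leq\norm{f}\,\norm{g}$ is likewise sound. Your alternative inductive route via $\delta(Fu)=F\delta(u)-\langle DF,u\rangle_{\Hi}$ and Pascal's identity is the other standard textbook argument; either would serve as a complete proof here.
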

\begin{proof}
See \cite{nourdin2012normal}, Theorem 2.7.10.
\end{proof}

\subsection{The Fourth Moment Theorem}

With our arsenal of technical tools, we can start to prepare the statement of the fourth moment theorem.
We begin by stating another very remarkable fact. For a vector of $L^2$-variables belonging to a fixed Wiener chaos, joint weak convergence to the Gaussian distribution is equivalent to marginal convergence. More precisely, we have the following theorem:
\begin{theorem}
\label{4thmoment2}
Let $d\geq 2$ and $q_d,\dots ,q_1 \geq 1$ be some fixed integers.
Consider vectors:
\[
\mathbf F_n:=(F_{1,n},\dots,F_{d,n})=(I_{q_1}(f_{1,n}),\dots,I_{q_d}(f_{d,n})), \qquad n\geq 1,
\]
with $f_{i,n}\in \Hi^{\odot q_i}$.  Let $C\in \mathscr M_d(\R)$ be a symmetric, nonnegative definite matrix, and let $\mathbf N \sim  \mathscr N _d(0,C)$. Assume that:
\begin{equation}
\label{fundamental}
\lim_{n\to \infty} \E\left[F_{r,n}F_{s,n}\right]= C(r,s), \qquad 1\leq r,s\leq d.
\end{equation}
Then, as $n\to \infty$ the following two conditions are equivalent:
\begin{itemize}
\item[a)] $\mathbf F_n$ converges in law to $\mathbf N$.
\item[b)] For every $1\leq r \leq d$, $F_{r,n}$ converges in law to $\mathscr{N}(0,C(r,r))$.
\end{itemize} 
\end{theorem}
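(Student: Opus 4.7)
The direction (a)$\Rightarrow$(b) is immediate, since joint weak convergence entails weak convergence of each coordinate, and the marginals of $\mathbf N\sim\mathscr N_d(0,C)$ are $\mathscr N(0,C(r,r))$. The content of the theorem is therefore the reverse implication, which is the celebrated Peccati--Tudor theorem.

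My plan for (b)$\Rightarrow$(a) is to combine the univariate Fourth Moment Theorem with the multivariate Malliavin--Stein method. First, by the univariate Fourth Moment Theorem (Thm.~5.2.7 of \cite{nourdin2012normal}) together with \eqref{fundamental}, condition (b) is equivalent to the statement
\[
\norm{f_{r,n}\otimes_p f_{r,n}}_{\Hi^{\otimes 2(q_r-p)}}\longrightarrow 0,\qquad n\to\infty,
\]
for every $r\in\{1,\dots,d\}$ and every $p\in\{1,\dots,q_r-1\}$. Tightness of $\{\mathbf F_n\}_{n\ge1}$ is immediate from the uniformly bounded second moments guaranteed by \eqref{fundamental}, so it suffices to identify any subsequential weak limit as $\mathscr N_d(0,C)$.

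To this end, for a test function $\phi\in C_b^2(\R^d)$ I would use the Stein equation for $\mathscr N_d(0,C)$, writing $\phi(\mathbf x)-\E[\phi(\mathbf N)]=\sum_r x_r\partial_r U_\phi(\mathbf x)-\sum_{r,s}C(r,s)\partial^2_{rs}U_\phi(\mathbf x)$ for a bounded $U_\phi$, and substituting $\mathbf x=\mathbf F_n$. Applying the Malliavin integration by parts formula componentwise through the identity $F_{r,n}=-\delta DL^{-1}F_{r,n}$, where $L$ denotes the Ornstein--Uhlenbeck generator, yields the quantitative bound
\[
\left|\E[\phi(\mathbf F_n)]-\E[\phi(\mathbf N)]\right|\;\le\;K\sum_{r,s=1}^{d}\E\left[\left|C(r,s)-\tfrac{1}{q_r}\langle DF_{r,n},-DL^{-1}F_{s,n}\rangle_{\Hi}\right|\right],
\]
for a constant $K$ depending only on $\phi$ and $C$.

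The main obstacle is the off-diagonal terms $r\neq s$ with possibly $q_r\neq q_s$, where one cannot directly invoke the univariate Fourth Moment Theorem since a linear combination $\sum_k\lambda_k F_{k,n}$ lives in a direct sum of Wiener chaoses rather than a single one. I would overcome this by expanding each inner product $\langle DF_{r,n},-DL^{-1}F_{s,n}\rangle_{\Hi}$ via the product formula of Theorem \ref{productmultiple} into a finite sum of multiple integrals whose kernels are cross-contractions $f_{r,n}\otimes_p f_{s,n}$, $p\in\{1,\dots,q_r\wedge q_s\}$, plus a constant term equal to $q_r!\,\langle f_{r,n},f_{s,n}\rangle$ when $q_r=q_s$ and zero otherwise. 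The $L^2$-norms of the cross-contractions are controlled by Cauchy--Schwarz in terms of the diagonal contractions $\norm{f_{r,n}\otimes_{p'}f_{r,n}}$ and $\norm{f_{s,n}\otimes_{p''}f_{s,n}}$, all of which vanish by the reduction in the second paragraph, while the constant term collapses to $C(r,s)$ thanks to \eqref{fundamental}. Passing to the limit yields $\E[\phi(\mathbf F_n)]\to\E[\phi(\mathbf N)]$ for all $\phi\in C_b^2(\R^d)$, which together with tightness identifies the weak limit of any convergent subsequence as $\mathscr N_d(0,C)$, completing the proof.
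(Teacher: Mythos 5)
The paper does not prove this statement at all: it simply cites Theorem 6.2.3 of \cite{nourdin2012normal}, i.e.\ the Peccati--Tudor theorem. Your sketch is, in outline, a correct reconstruction of the standard Malliavin--Stein proof given in that reference: reduce (b) to the vanishing of the diagonal contractions via the univariate Fourth Moment Theorem, get tightness from the bounded second moments, and then control $\E[\phi(\mathbf F_n)]-\E[\phi(\mathbf N)]$ by the $L^1$-distance between $C(r,s)$ and the Malliavin inner products, which are expanded by the product formula into cross-contractions $f_{r,n}\otimes_p f_{s,n}$ and a constant term. Two small points deserve care. First, your quantitative bound double-counts the normalisation: since $-DL^{-1}F_{s,n}=\tfrac{1}{q_s}DF_{s,n}$, the relevant quantity is $\tfrac{1}{q_s}\langle DF_{r,n},DF_{s,n}\rangle_{\Hi}$ (one factor, not $\tfrac{1}{q_r}\langle DF_{r,n},-DL^{-1}F_{s,n}\rangle_{\Hi}$). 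Second, in the Cauchy--Schwarz step $\norm{f_{r,n}\otimes_p f_{s,n}}^2=\langle f_{r,n}\otimes_{q_r-p}f_{r,n},\,f_{s,n}\otimes_{q_s-p}f_{s,n}\rangle$, when $q_r\neq q_s$ and $p=q_r\wedge q_s$ one of the two diagonal contractions is $f\otimes_0 f$, whose norm is $\norm{f}^2$ and does \emph{not} vanish; the product still tends to zero because the other factor does, but the blanket claim that ``all of which vanish'' is imprecise. You should also note that when $q_r\neq q_s$ orthogonality of the chaoses forces $C(r,s)=0$ via \eqref{fundamental}, which is what makes the off-diagonal constant terms match, and that the degenerate case $C(r,r)=0$ is handled by $\norm{f_{r,n}}\to 0$. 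With these repairs the argument is complete and is essentially the proof of the theorem the paper cites.
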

\begin{proof}
See Theorem 6.2.3 in \cite{nourdin2012normal}.
\end{proof}

We can finally present the statement of the fourth moment theorem, which gives us equivalent conditions for convergence in law when the sequence of variables belongs to a fixed Wiener chaos. 
\begin{theorem}[Fourth moment theorem]
\label{powerfulfriend}
Let $F_n = I_q ( f_n), n \geq 1,$ be a
sequence of random variables belonging to the $q$-th chaos of X, for some fixed
integer $q \geq 2$ (so that $f_n \in \mathcal H^{\odot q}$). Assume, moreover, that $\E[F^2_n] \to \sigma^2 > 0$
as $n\to\infty$. Then, as $n\to\infty$, the following assertions are equivalent:
\begin{enumerate}
\item $F_n \overset{\mathscr L}{\rightarrow}N(0,\sigma^2)$,
\item $\lim_{n\to\infty}\E[F_n^4]= 3\sigma^2$,
\item $\norm{f_n \otimes_r f_n}_{H^{\otimes (2q-2r)}} \to 0$, for all $r=1,\dots,q-1$.
\end{enumerate}
\end{theorem}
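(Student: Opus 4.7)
The plan is to prove the cyclic chain $(1) \Rightarrow (2) \Rightarrow (3) \Rightarrow (1)$. The first implication is a standard uniform integrability argument, the second is a combinatorial calculation based on the product formula (Theorem~\ref{productmultiple}), and the third is the deep step, requiring the combination of Stein's method with Malliavin calculus.

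For $(1) \Rightarrow (2)$, I would invoke the classical hypercontractivity estimate on a fixed Wiener chaos: for every $F \in \Hi_q$ and every $p \geq 2$ there exists a constant $c_{p,q}$ with $\E[|F|^p] \leq c_{p,q}\, \E[F^2]^{p/2}$. Since $\E[F_n^2] \to \sigma^2$, the family $\{F_n^4\}$ is uniformly integrable, so convergence in law automatically upgrades to convergence of fourth moments, with limiting value $3\sigma^4$.

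For $(2) \Rightarrow (3)$, the key input is the product formula applied to $F_n^2 = I_q(f_n)^2$, which yields
\begin{align*}
F_n^2 = \sum_{r=0}^{q} r! \binom{q}{r}^2 I_{2q-2r}\bigl(f_n \,\widetilde{\otimes}_r f_n\bigr).
\end{align*}
Taking a second expectation and using the orthogonality of chaoses of distinct order together with the isometry $\E[I_m(f)^2] = m! \norm{f}_{\Hi^{\otimes m}}^2$, one obtains an explicit identity of the form
\begin{align*}
\E[F_n^4] = 3 \bigl(\E[F_n^2]\bigr)^2 + \sum_{r=1}^{q-1} c_{q,r} \,\norm{f_n \,\widetilde{\otimes}_r f_n}_{\Hi^{\otimes (2q-2r)}}^2,
\end{align*}
with strictly positive constants $c_{q,r}$ depending only on $q$ and $r$. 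Since every term on the right is nonnegative, combining $\E[F_n^2]\to \sigma^2$ with $\E[F_n^4]\to 3\sigma^4$ forces each symmetrised contraction norm to tend to zero, and a short comparison argument then upgrades this to the unsymmetrised contractions $\norm{f_n \otimes_r f_n} \to 0$.

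The hard part is $(3) \Rightarrow (1)$, where the full Stein-Malliavin machinery must be deployed. One starts from the bound
\begin{align*}
d_{TV}(F_n, N) \leq \frac{2}{\sigma^2}\sqrt{\Var\!\left(\frac{1}{q}\norm{DF_n}_{\Hi}^2\right)},
\end{align*}
with $N \sim \mathscr N(0,\sigma^2)$ and $D$ the Malliavin derivative. Using $DF_n = q\, I_{q-1}(f_n)$ and the product formula once more, $q^{-1}\norm{DF_n}_{\Hi}^2$ expands as $\E[F_n^2]$ plus a finite sum of chaoses of order $2q-2r$ whose kernels are exactly multiples of $f_n \otimes_r f_n$ for $r=1,\ldots,q-1$. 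Hypothesis (3) therefore forces this variance to zero, whence $d_{TV}(F_n, N) \to 0$ and in particular convergence in law. The principal obstacle is assembling the Stein-Malliavin estimate itself: it requires solving Stein's equation for the normal law, applying Malliavin integration by parts to rewrite the relevant expectation in terms of $DF_n$, and then identifying the combinatorial coefficients so that the contractions reappear as the kernels of the non-constant chaos components. The two combinatorial computations (for $(2) \Rightarrow (3)$ and for the expansion of $\norm{DF_n}_{\Hi}^2$) are parallel applications of Theorem~\ref{productmultiple}; it is really the Stein bound that supplies the novel analytic content.
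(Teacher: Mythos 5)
The paper offers no proof of this result: its ``proof'' is a one-line citation of Theorem 5.2.7 in Nourdin and Peccati's monograph, and your outline is, in substance, exactly the Stein--Malliavin argument given there, so you have reconstructed the intended proof rather than diverged from it. One step, however, would fail as literally described. In $(2)\Rightarrow(3)$ you derive that the \emph{symmetrised} contraction norms $\norm{f_n\,\widetilde\otimes_r f_n}$ vanish and then claim that ``a short comparison argument'' upgrades this to the unsymmetrised contractions. The triangle inequality applied to the symmetrisation gives $\norm{f_n\,\widetilde\otimes_r f_n}\leq\norm{f_n\otimes_r f_n}$, i.e.\ the only cheap comparison goes in the wrong direction, so vanishing of the symmetrised norms does not by itself force vanishing of the unsymmetrised ones. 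The standard remedy is to use the sharper form of the fourth-moment identity,
\[
\E[F_n^4]-3\bigl(\E[F_n^2]\bigr)^2=\sum_{r=1}^{q-1}(q!)^2\binom{q}{r}^2\left(\norm{f_n\otimes_r f_n}^2_{\Hi^{\otimes(2q-2r)}}+\binom{2q-2r}{q-r}\norm{f_n\,\widetilde\otimes_r f_n}^2_{\Hi^{\otimes(2q-2r)}}\right),
\]
in which the unsymmetrised norms already appear with strictly positive coefficients; with that identity your argument closes immediately. (Inside $(3)\Rightarrow(1)$ the inequality $\norm{f_n\,\widetilde\otimes_r f_n}\leq\norm{f_n\otimes_r f_n}$ goes the right way, so the variance bound for $q^{-1}\norm{DF_n}^2_{\Hi}$ is fine as written.) Finally, your limiting value $3\sigma^4$ is the correct fourth moment of $N(0,\sigma^2)$; the value $3\sigma^2$ in item 2 of the theorem as stated is a typo.
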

\begin{proof}
This is a simplified version of Theorem 5.2.7 in \cite{nourdin2012normal}.
\end{proof}

\section{A central limit theorem for the realised covariation of the Gaussian core}\label{S4}

This section focusses on the Gaussian core ${\bf G}$ as defined in Definition \ref{otioti}; we will use the  notation from Subsection \ref{DOSF} in the following.

Since 
$\mathbf G$ is a Gaussian process, we can apply the Hilbert-space  techniques depicted above, using the Hilbert space of $L^2$-Gaussian variables. To this end, let $\Hi$ be the Hilbert space generated by the random variables given by the scaled increments of the Gaussian core:
\[
\left( \frac{\Delta_i^n G\tot j}{\tau \tot j_n}\right)_{n\geq 1, 1\leq i \leq \floor {nt}, j \in \{1,2\}},
\]
equipped with the scalar product $\langle \cdot, \cdot \rangle_{\Hi}$ induced by $L^2(\Omega, \mathscr F, \Prob)$, i.e., for $X,Y\in \Hi$, we have
$\langle X, Y\rangle_{\Hi}=\E\left[ XY\right]$.

Denoting by $I_d$ the multiple  integral of order $d$, acting on $\Hi^{\odot d}$, with values in $L^2(\Omega),$ (see Definition \ref{multi}),  we can write:
\[
\frac{\DD G\tot1}{\tau\tot1_n}= I_1\left(\frac{\DD G\tot1}{\tau\tot1_n}\right), \qquad  \frac{\DD G\tot2}{\tau\tot2_n}= I_1\left(\frac{\DD G\tot2}{\tau\tot2_n}\right).
\]
Recall the definition of the symmetrisation of the tensor product:
$x \widetilde\otimes y := \frac 12 \left(x\otimes y+ y\otimes x\right)$.
Using the product formula \eqref{productmultiple}, the product of two multiple integrals becomes:
\[
\begin{split}
\frac{\DD G\tot1}{\tau\tot1_n} \frac{\DD G\tot2}{\tau\tot2_n}&=I_1\left(\frac{\DD G\tot1}{\tau\tot1_n}\right)I_1\left(\frac{\DD G\tot2}{\tau\tot2_n}\right)= \sum_{r=0}^1 r! \binom 1r \binom 1r I_{2-2r}\left(\frac{\DD G\tot1}{\tau\tot1_n} \widetilde{\otimes}_r \frac{\DD G\tot2}{\tau\tot2_n}
\right)\\
&=I_2\left(\frac{\DD G\tot1}{\tau\tot1_n} \widetilde{\otimes} \frac{\DD G\tot2}{\tau\tot2_n}\right)+ \E\left[\frac{\DD G\tot1}{\tau\tot1_n} \frac{\DD G\tot2}{\tau\tot2_n}\right].
\end{split}
\]
Rearranging, this yields:
\[
\frac{\DD G\tot1}{\tau\tot1_n} \frac{\DD G\tot2}{\tau\tot2_n}-\E\left[\frac{\DD G\tot1}{\tau\tot1_n} \frac{\DD G\tot2}{\tau\tot2_n}\right]=I_2\left(\frac{\DD G\tot1}{\tau\tot1_n} \widetilde{\otimes} \frac{\DD G\tot2}{\tau\tot2_n}\right).
\]
Let us hence define the function $f\colon L^2(\Omega)\times L^2(\Omega)\to \R$ given by
$f(X,Y)=XY-\E[XY]$, 
and the process:
\[
Z^n_t=\frac{1}{\sqrt n} \sum_{i=1}^{\floor{nt} }f\left(\frac{\DD G\tot1}{\tau\tot1_n}, \frac{\DD G\tot2}{\tau\tot2_n}\right)= \frac{1}{\sqrt n}  \sum_{i=1}^{\floor{nt} } I_2\left( \frac{\DD G\tot1}{\tau\tot1_n} \widetilde{\otimes} \frac{\DD G\tot2}{\tau\tot2_n}\right).
\]

\subsection{A uniform bound for the covariance}
We can now formulate a uniform bound for the covariance term 
 $r^{(n)}_{i,j}(k):=\E\left[ \frac{ \Delta^n_1 G\tot i}{\tau_n\tot i} \frac{\Delta^n_{1+k} G\tot j}{\tau_n\tot j}\right]$, for $i, j \in \{1,2\}$.
\begin{theorem}
\label{ohfinallyyes}
Let $\epsilon>0$, with $\epsilon<1-\delta\tot i -\delta \tot j$, for $i, j\in \{1, 2\}$. Define:
\[
r_{i,j}(k):=(k-1)^{\delta\tot i+ \delta \tot j +\epsilon -1}, \quad \text{if $k> 1$},
\]
and $r_{i,j}(0)=r_{i,j}(1)=1$.
Under Assumptions  \ref{squareassumm} and \ref{assss},  there exists  {a positive constant $C<\infty$ and } a natural number $n_0(\epsilon)$ such that:
\begin{equation}
\label{bound}
\left| r^{(n)}_{i,j}(k)\right| \leq { C } r_{i,j}(k), \quad \text{for $k \geq 0$},
\end{equation}
for all $n\geq n_0(\epsilon)$.
Moreover, define  {  $\rho\tot{i,j}_{\theta}(0) = \rho H$ for $i\ne j$   and $\rho\tot{i,j}_{\theta}(0)=1$ for $i=j$, and for any $i, j \in \{1,2\}$ set}
\begin{align}\label{rhonotation}
\rho\tot{i,j}_{\theta}(k)=\frac 12 \rho_{i,j} H\tot{i,j}\left((k-1)^{\theta} - 2k^{\theta}+(k+1)^{\theta}\right), \quad \text{ for } k\geq 1.
\end{align}
 Then it holds that:
\begin{equation}
\label{thisconverges}
\lim_{n\to\infty}r\tot n_{i,j}(k)= \rho\tot{i,j}_{\delta\tot i+\delta \tot j+1}(k), \quad \text{for all } k\geq 0,\ i, j \in \{1,2\}.
\end{equation}
\end{theorem}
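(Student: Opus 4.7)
The starting point is to rewrite $r\tot n_{i,j}(k)$ as a normalised second difference of $\bar R\tot{i,j}$. Since the Gaussian core is stationary, the identity $\E[G\tot i_s G\tot j_t]=\tfrac12(\|g\tot i\|_{L^2}^2+\|g\tot j\|_{L^2}^2-\bar R\tot{i,j}(|t-s|))$ leads, for $k\ge 1$, to
\begin{equation*}
\E\left[\Delta^n_1 G\tot i\,\Delta^n_{1+k}G\tot j\right]=\tfrac12\bigl[\bar R\tot{i,j}((k+1)\Delta_n)-2\bar R\tot{i,j}(k\Delta_n)+\bar R\tot{i,j}((k-1)\Delta_n)\bigr].
\end{equation*}
From Remark~\ref{reass} (equivalently, \eqref{mfkjfkfid} with $i=j$) I read off $\tau\tot i_n\tau\tot j_n=\Delta_n^{\delta\tot i+\delta\tot j+1}\tilde L_0\tot{i,j}(\Delta_n)$. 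The cases $k=0,1$ require separate treatment: $r\tot n_{i,i}(0)=1$ is immediate, and for $i\ne j$ I use the direct identity $\E[\Delta^n_1 G\tot 1\,\Delta^n_1 G\tot 2]=\rho\,c(\Delta_n)$ together with Lemma~\ref{terrific}.

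\textbf{Pointwise limit.} For $k\ge 1$ I substitute \eqref{mfkjfkfid} into the second-difference representation; the constants $C_{i,j}$ cancel, and pulling out the common factor $\Delta_n^{\theta}$ with $\theta=\delta\tot i+\delta\tot j+1$ yields
\begin{equation*}
r\tot n_{i,j}(k)=\frac{\rho_{i,j}}{2\tilde L_0\tot{i,j}(\Delta_n)}\sum_{m\in\{k-1,k,k+1\}} c_m\,m^{\theta}L_0\tot{i,j}(m\Delta_n),
\end{equation*}
with $c_{k-1}=c_{k+1}=1$ and $c_k=-2$. Applying \eqref{ewiue} with $\lambda=m$ to each ratio $L_0\tot{i,j}(m\Delta_n)/\tilde L_0\tot{i,j}(\Delta_n)\to H\tot{i,j}$ sends the right-hand side to $\tfrac12\rho_{i,j}H\tot{i,j}((k-1)^{\theta}-2k^{\theta}+(k+1)^{\theta})=\rho\tot{i,j}_\theta(k)$. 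The case $k=0$, $i\ne j$ follows immediately from \eqref{pgkfjf} and \eqref{terrific2}, producing the limit $\rho H$.

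\textbf{Uniform bound.} This is the delicate step. For $k\ge 2$, I use the second-derivative part of Assumption~\ref{assss} together with the integral representation
\begin{equation*}
\bar R\tot{i,j}((k+1)\Delta_n)-2\bar R\tot{i,j}(k\Delta_n)+\bar R\tot{i,j}((k-1)\Delta_n)=\int_{-\Delta_n}^{\Delta_n}(\Delta_n-|u|)(\bar R\tot{i,j})''(k\Delta_n+u)\,du.
\end{equation*}
Since $\delta\tot i+\delta\tot j-1<0$, the power $y\mapsto y^{\delta\tot i+\delta\tot j-1}$ is non-increasing, so the integral is dominated by $\Delta_n^{\theta+1}\cdot 2|\rho_{i,j}|(k-1)^{\delta\tot i+\delta\tot j-1}\sup_{y\in[(k-1)\Delta_n,(k+1)\Delta_n]}|L_2\tot{i,j}(y)|$; after dividing by $2\tau\tot i_n\tau\tot j_n$ the whole problem reduces to bounding $\sup_{y}|L_2\tot{i,j}(y)|/\tilde L_0\tot{i,j}(\Delta_n)$.

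\textbf{Main obstacle.} Controlling this slowly varying ratio uniformly in $k$ is the crux. I split into two regimes: when $(k+1)\Delta_n\le\Delta_n^{b}$ the uniform estimate \eqref{ass4} applies directly and delivers a constant; when $(k+1)\Delta_n>\Delta_n^{b}$, I invoke the Potter bounds for slowly varying functions from \cite{bingham1989regular}, which produce, for $n$ large enough depending on the prescribed $\epsilon$, a factor of order at most $k^{\epsilon}$. Combining the two regimes absorbs the slowly varying contribution into the extra $\epsilon$ and yields the desired bound $|r\tot n_{i,j}(k)|\le C(k-1)^{\delta\tot i+\delta\tot j+\epsilon-1}$; the hypothesis $\epsilon<1-\delta\tot i-\delta\tot j$ keeps this exponent strictly negative. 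The cases $k=0,1$ are bounded by their finite limits from the previous paragraph, which completes the proof.
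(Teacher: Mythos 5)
Your proposal is correct and follows essentially the same route as the paper: both rewrite $r\tot n_{i,j}(k)$ as a normalised second difference of $\bar R\tot{i,j}$, express that difference through $(\bar R\tot{i,j})''$ (the paper via a Lagrange mean-value lemma, you via the equivalent exact integral representation), obtain the pointwise limits from \eqref{mfkjfkfid}, \eqref{ewiue} and Lemma \ref{terrific}, and prove the uniform bound by the same two-regime split at $k\asymp n^{1-b}$ using \eqref{ass4} near the origin and Potter bounds elsewhere. The only blemish is the exponent $\Delta_n^{\theta+1}$ in your bound on the integral, which should read $\Delta_n^{\theta}$ (so that dividing by $\tau\tot i_n\tau\tot j_n$ leaves exactly the ratio $\sup_y|L_2\tot{i,j}(y)|/\tilde L_0\tot{i,j}(\Delta_n)$ as you then correctly assert).
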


\subsection{Convergence of the finite dimensional distributions of  the Gaussian core}

In order to look at the convergence of the finite-dimensional distributions, let $\{a_k\}, \{b_k\}$ be two increasing sequences of positive  real numbers, with $a_k<b_k<a_{k+1}$, and consider, for any $d\in \N$ the vector:
\[
\begin{pmatrix}
Z^n_{b_1}-Z^n_{a_1}, \dots , Z^n_{b_d}-Z^n_{a_d}  
\end{pmatrix}^\top,
\]
whose generic $k-$th component is:
\[
\frac{1}{\sqrt n} \sum_{i=\floor {n a_k}+1}^{\floor {n b_k}} I_2 \left( \frac{\DD G\tot1}{\tau\tot1_n} \widetilde{\otimes} \frac{\DD G\tot2}{\tau\tot2_n}\right)=I_2\left(\frac{1}{\sqrt n}  \sum_{i=\floor {n a_k}+1}^{\floor {n b_k}}\frac{\DD G\tot1}{\tau\tot1_n} \widetilde{\otimes} \frac{\DD G\tot2}{\tau\tot2_n}\right).
\]

\begin{theorem}[Convergence of the finite dimensional distributions]
\label{finitedisttheo}
Take a Gaussian core as defined in Definition \ref{otioti}. Let Assumptions  \ref{squareassumm} and  \ref{assss}  be satisfied and suppose that 
 $\delta\tot1\in(-\frac 12,\frac 14)\setminus\{0\}, \delta\tot2\in(-\frac 12,\frac 14)\setminus\{0\}$.  
 Consider $f\colon L^2(\Omega)\times L^2(\Omega)\to \R$ given by $f(X,Y)=XY-\E[XY]$, 
and the process:
\[
Z^n_t=\frac{1}{\sqrt n} \sum_{i=1}^{\floor{nt} }f\left(\frac{\DD G\tot1}{\tau\tot1_n} ,\frac{\DD G\tot2}{\tau\tot2_n} \right)= \frac{1}{\sqrt n}  \sum_{i=1}^{\floor{nt} } I_2\left( \frac{\DD G\tot1}{\tau\tot1_n} \widetilde{\otimes} \frac{\DD G\tot2}{\tau\tot2_n}\right).
\]
Let $\{a_k\}, \{b_k\}$ be two increasing sequences of positive  real numbers, with $a_k<b_k<a_{k+1}$, and consider, for any $d\in \N$ the vector:
\[
\mathbf{Z}^n_t:=\begin{pmatrix}
Z^n_{b_1}-Z^n_{a_1}, \dots , Z^n_{b_d}-Z^n_{a_d}  \end{pmatrix}^\top=\begin{pmatrix}F_{1,n}\dots, F_{d,n}
\end{pmatrix}^\top.
\]
Then $\mathbf Z ^n_t\Rightarrow \boldsymbol{ N}\sim \mathscr N_d(\mathbf 0,\mathbf C)$,
where $C_{i,j}=\lim_{n\to \infty} \E\left[F_{i,n}F_{j,n}\right],  1\leq i,j\leq d.$
Finally, the matrix $\mathbf C$ is diagonal, and the general $j$-th diagonal element is equal to $C(1,1)(b_j-a_j)$, with 
\begin{multline}\label{mknmknm}
C(1,1):=2\sum_{k=1}^{\infty} \left(  \rho\tot{1,1}_{2\delta\tot 1}(k) \rho\tot{2,2}_{2\delta \tot 2}(k)+ \left(\rho\tot{1,2}_{\delta\tot 1 + \delta \tot 2}(k)\right)^2\right)+(1+\rho^2 { H^2}) < \infty.
\end{multline}
\end{theorem}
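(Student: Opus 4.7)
The plan is to apply the multivariate convergence result of Theorem~\ref{4thmoment2} to the vector $\mathbf Z^n_t$. Each component $F_{k,n}$ lies in the second Wiener chaos $\mathcal H_2$, being a normalised sum of second-order multiple integrals of the increments of $\mathbf G$. Hence it suffices to (i) establish convergence of the covariance matrix $\E[F_{r,n}F_{s,n}]\to C(r,s)$, and (ii) verify marginal convergence $F_{k,n}\Rightarrow\mathscr N(0,C(k,k))$ for each $k$.

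\textbf{Covariance analysis.} Starting from the isometry $\E[I_2(f)I_2(g)] = 2\langle\tilde f,\tilde g\rangle_{\mathcal H^{\otimes 2}}$ together with the identity $2\langle a\widetilde\otimes b, c\widetilde\otimes d\rangle = \langle a,c\rangle\langle b,d\rangle + \langle a,d\rangle\langle b,c\rangle$, and using the notation $r^{(n)}_{i,j}(k)$ from Theorem~\ref{ohfinallyyes}, one computes
\[
\E[F_{r,n}F_{s,n}] = \frac{1}{n}\sum_{i,j}\bigl[r^{(n)}_{1,1}(i-j)r^{(n)}_{2,2}(i-j) + r^{(n)}_{1,2}(i-j)r^{(n)}_{2,1}(i-j)\bigr],
\]
where $i$ ranges over $\{\lfloor na_r\rfloor+1,\dots,\lfloor nb_r\rfloor\}$ and $j$ over the analogous $s$-window. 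For $r\neq s$ the windows are disjoint, so the minimum value of $|i-j|$ grows linearly in $n$; the polynomial decay bound of Theorem~\ref{ohfinallyyes} together with $\delta^{(1)}+\delta^{(2)}<1/2$ then forces $C(r,s)=0$. For $r=s$ I would reindex by $k=i-j$, note that the number of pairs with fixed $k$ equals $(\lfloor nb_r\rfloor-\lfloor na_r\rfloor)-|k|$, and apply dominated convergence justified by the summable majorant $r_{i,j}$ and the pointwise limit \eqref{thisconverges}. The symmetry $k\mapsto -k$ yields the factor~$2$ in \eqref{mknmknm}, whereas the term $1+\rho^2 H^2$ corresponds to the $k=0$ contribution.

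\textbf{Marginal CLT via the Fourth Moment Theorem.} Write $F_{k,n}=I_2(f_n^k)$ with kernel $f_n^k := n^{-1/2}\sum_i(\Delta_i^n G^{(1)}/\tau_n^{(1)})\,\widetilde\otimes\,(\Delta_i^n G^{(2)}/\tau_n^{(2)})$, the index $i$ ranging over the $k$-th window. Since $F_{k,n}\in\mathcal H_2$ with variance converging to $C(k,k)$, Theorem~\ref{powerfulfriend} reduces marginal convergence to the single contraction condition $\|f_n^k\otimes_1 f_n^k\|_{\mathcal H^{\otimes 2}}\to 0$. Expanding the symmetrised tensor product and using $(a\otimes b)\otimes_1(c\otimes d)=\langle a,c\rangle\,b\otimes d$, the squared norm becomes a quadruple sum, indexed by $i_1,i_2,i_3,i_4$, of products of four correlations of the type $r^{(n)}_{\cdot,\cdot}(\cdot)$, the arguments being the pairwise differences arising from the four possible contractions between symmetrised tensors. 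Bounding each factor via Theorem~\ref{ohfinallyyes} and reorganising the quadruple sum as a double convolution in lag differences reduces the task to verifying $o(1)$-bounds on sums of the form $n^{-1}\sum_{k_1,k_2}\prod_\ell r_{\cdot,\cdot}(k_\ell)$; the polynomial decay exponents combine to yield vanishing precisely under the sharper constraint $\delta^{(1)},\delta^{(2)}<1/4$.

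\textbf{Main obstacle.} The central technical difficulty is the contraction estimate above. Each individual correlation $r^{(n)}_{i,j}(k)$ decays only polynomially as $k^{\delta^{(i)}+\delta^{(j)}-1+\epsilon}$, which fails to be absolutely summable when $\delta^{(i)}+\delta^{(j)}$ approaches $1/2$; the required bound must therefore exploit the full quadruple cross-correlation structure rather than pairwise $\ell^1$-bounds. The bookkeeping of the four indices, keeping track of which pairs contract against which in a two-component vector setting, requires a careful case split. This interplay of the four factors is precisely why the CLT requires the stronger condition $\delta^{(j)}<1/4$, in contrast to the $\delta^{(j)}<1/2$ that is sufficient for the corresponding law of large numbers in \cite{GranelliVeraart2017a}.
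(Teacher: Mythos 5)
Your proposal follows essentially the same route as the paper: reduction of joint to marginal convergence via Theorem \ref{4thmoment2}, computation of the covariance matrix through the correlation functions $r^{(n)}_{i,j}(k)$ of Theorem \ref{ohfinallyyes} (reindexing by the lag, dominated convergence, the $k=0$ term giving $1+\rho^2H^2$), and verification of the contraction condition of Theorem \ref{powerfulfriend} by bounding the resulting quadruple sum of correlations, with square-summability of the $r^{(n)}_{a,b}$ (hence $\delta^{(1)},\delta^{(2)}<\frac14$) as the decisive ingredient. The one place you take a shortcut is the off-diagonal covariance: the paper's Cesaro-type estimate also covers adjacent windows ($b_r=a_s$, the case actually used for the process-level fidis), whereas your ``minimum lag grows linearly in $n$'' argument relies on a strict gap $b_r<a_s$ — immaterial for the theorem as literally stated, but worth noting.
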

In order to compute $C(1,1)$ we remark that the definition of the terms of the form  $\rho_{\theta}^{(i,j)}(k)$ was given in equation  
 \eqref{rhonotation}. 
 
The series in \eqref{mknmknm} converges absolutely, thanks to Theorem \ref{ohfinallyyes}, as it is bounded by:
\[
4\sum_{k=1}^\infty \left(k-1\right)^{2\delta\tot1+2\delta\tot2+2\epsilon -2},
\]
which converges if and only if $2\delta\tot1+2\delta\tot2+2\epsilon -2 < -1 \iff  \delta\tot1+\delta\tot2+\epsilon  < \frac12$, which is implied by our assumption that $\delta\tot1\in(-\frac 12,\frac 14)\setminus\{0\}, \delta\tot2\in(-\frac 12,\frac 14)\setminus\{0\}$.  
\subsection{Tightness of the law of the realised covariation for the Gaussian core}
As customary when proving weak convergence, we also need a tightness result for the law of the realised covariation process. This turns out to be a lot simpler than the convergence of the finite dimensional distributions. 
\begin{theorem}[Tightness]
\label{tightnesstheo}
Let the assumptions as in Theorem \ref{finitedisttheo} hold.
For all $n\in \N$, let $\mathbb P^n$ be the law of the process:
\[
Z^n_\cdot=\frac{1}{\sqrt n} \sum_{i=1}^{\floor{n\cdot} }f\left(\frac{\DD G\tot1}{\tau\tot1_n} ,\frac{\DD G\tot2}{\tau\tot2_n} \right)= \frac{1}{\sqrt n}  \sum_{i=1}^{\floor{n\cdot} } I_2\left( \frac{\DD G\tot1}{\tau\tot1_n} \widetilde{\otimes} \frac{\DD G\tot2}{\tau\tot2_n}\right),
\]
on the Skorokhod space $\mathcal D[0,T]$.
Then, the sequence $\{\mathbb P^n\}_{n\in\N}$ is tight.
\end{theorem}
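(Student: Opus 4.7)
The plan is to apply a standard moment criterion for tightness in the Skorokhod space $\mathcal D[0,T]$. Specifically, by Theorem~13.5 of \cite{billingsley2009convergence} it is enough to find constants $\beta,\alpha>0$ and $K<\infty$ such that
\[
\E\left[|Z^n_t-Z^n_s|^{2\beta}|Z^n_u-Z^n_t|^{2\beta}\right]\leq K(u-s)^{1+\alpha}
\]
for every $0\leq s\leq t\leq u\leq T$ and every $n$ large enough. Taking $\beta=1$ and using Cauchy--Schwarz, the task reduces to estimating $\E[(Z^n_b-Z^n_a)^4]$ for a generic subinterval $[a,b]\subseteq[0,T]$. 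The crucial observation is that $Z^n_b-Z^n_a$ belongs to the second Wiener chaos of the underlying isonormal process, as it is a finite linear combination of second multiple integrals. Consequently, the hypercontractive inequality on a fixed Wiener chaos (a classical consequence of Nelson's theorem, see \cite[Corollary~2.8.14]{nourdin2012normal}) gives
\[
\E\bigl[(Z^n_b-Z^n_a)^4\bigr]\leq C\bigl(\E[(Z^n_b-Z^n_a)^2]\bigr)^2,
\]
so everything boils down to proving a bound of the form $\E[(Z^n_b-Z^n_a)^2]\leq C(b-a)$.

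To establish such an estimate I would expand the square and apply Isserlis' (Wick's) theorem to the jointly centred Gaussian quadruples $(X_i,Y_i,X_j,Y_j)$: for $f(X,Y)=XY-\E[XY]$,
\[
\E[f(X_i,Y_i)f(X_j,Y_j)]=\E[X_iX_j]\E[Y_iY_j]+\E[X_iY_j]\E[Y_iX_j].
\]
Applied to our setting this produces
\[
\E[(Z^n_b-Z^n_a)^2]=\frac{1}{n}\sum_{i,j=\lfloor na\rfloor+1}^{\lfloor nb\rfloor}\bigl(r^{(n)}_{1,1}(|j-i|)r^{(n)}_{2,2}(|j-i|)+r^{(n)}_{1,2}(j-i)r^{(n)}_{2,1}(j-i)\bigr).
\]
Reindexing the double sum by the lag $k=|j-i|$ and invoking the uniform covariance bound $|r^{(n)}_{i,j}(k)|\leq Cr_{i,j}(k)$ from Theorem~\ref{ohfinallyyes} yields
\[
\E[(Z^n_b-Z^n_a)^2]\leq \frac{C(\lfloor nb\rfloor-\lfloor na\rfloor)}{n}\sum_{k=0}^\infty\bigl(r_{1,1}(k)r_{2,2}(k)+r_{1,2}(k)^2\bigr).
\]
For $\epsilon$ sufficiently small the series on the right is finite, because $\delta\tot1,\delta\tot2\in(-\tfrac12,\tfrac14)\setminus\{0\}$ forces $2(\delta\tot1+\delta\tot2+\epsilon)-2<-1$; this is exactly the summability argument already carried out after Theorem~\ref{finitedisttheo}. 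Hence $\E[(Z^n_b-Z^n_a)^2]\leq C(b-a)+C'/n$.

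Combining the two estimates gives $\E[|Z^n_t-Z^n_s|^2|Z^n_u-Z^n_t|^2]\leq C(u-s)^2$ whenever $u-s\geq 1/n$, corresponding to $\beta=1,\alpha=1$ in the tightness criterion. The residual case $u-s<1/n$ is handled by observing that $Z^n_\cdot$ is constant on intervals of length $1/n$, so at most one of the two increments is nonzero and that one is a single term of size $O(n^{-1/2})$, causing no difficulty. The principal technical obstacle in this plan is the bookkeeping at the Wick-expansion step: keeping track of the four cross-covariance contributions and checking that the lag-indexed double sum can be dominated, uniformly in $n$, by $(b-a)$ times a convergent series. Fortunately this is precisely what the bounds of Theorem~\ref{ohfinallyyes} are tailored for, so the remaining work beyond invoking hypercontractivity is essentially a reorganisation of already-established estimates.
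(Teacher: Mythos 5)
Your proposal is correct and follows essentially the same route as the paper: the paper likewise reduces everything to the second-moment bound $\E[(Z^n_t-Z^n_s)^2]\leq C\,\frac{\floor{nt}-\floor{ns}}{n}$ via the covariance estimates of Theorem \ref{ohfinallyyes}, and then invokes Theorem 7 of \cite{Corcuera2012New} together with Theorem 13.5 of \cite{billingsley2009convergence} to conclude. The hypercontractivity and piecewise-constancy arguments you spell out are precisely the content of that cited step, so your write-up simply makes explicit what the paper delegates to the reference.
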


\subsection{The central limit theorem for the Gaussian core}
With Theorem \ref{finitedisttheo} and \ref{tightnesstheo} at our disposal, it is immediate to prove the fundamental theorem stating weak convergence of the realised covariation of the Gaussian core:
\begin{theorem}[Weak Convergence of the Gaussian Core] \label{weakconGC} With the same  setting and assumptions of Theorem \ref{finitedisttheo},   
we obtain:
\label{weakconv}
\begin{equation}
\left(\frac{1}{\sqrt n}\sum_{i=1}^{\floor {nt}} \left(\incf ni1G \incf ni2G-\E\left[\incf ni1G \incf ni2G\right]\right) \right)_{t\in[0,T]}\Rightarrow \left(\sqrt{\beta} B_t\right)_{t\in[0,T]},
\label{mbkfjdfl}
\end{equation}
where $B_t$ is a Brownian motion independent of the processes $G\tot1$, $G\tot2$, $\beta=C(1,1)$ from \eqref{mknmknm} and the convergence is in the Skorokhod space $\mathcal \mathcal D[0,T]$ equipped with the Skorokhod topology.
\end{theorem}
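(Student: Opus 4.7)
The plan is to invoke the standard Billingsley--Prokhorov characterisation of weak convergence in the Skorokhod space $\mathcal{D}[0,T]$: convergence of finite-dimensional distributions (Theorem \ref{finitedisttheo}) combined with tightness of the sequence of laws $\{\Prob^n\}$ (Theorem \ref{tightnesstheo}) yields weak convergence of $Z^n$ to some limit $Z^{\infty}$ whose law is uniquely determined by its finite-dimensional distributions. Since this equivalence is a classical result, the bulk of the work is already packaged in the two preceding theorems, and the remaining task is to identify $Z^{\infty}$.

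The next step is precisely this identification. Applying Theorem \ref{finitedisttheo} with $a_k = t_{k-1}$ and $b_k = t_k$ for an arbitrary partition $0 = t_0 < t_1 < \dots < t_d \leq T$ of $[0,T]$, the joint limit law of the increments $(Z^{\infty}_{t_k} - Z^{\infty}_{t_{k-1}})_{k=1,\dots,d}$ is centered Gaussian with diagonal covariance whose $k$-th diagonal entry equals $\beta(t_k - t_{k-1})$. Hence $Z^{\infty}$ is a centered process with independent Gaussian increments of variance $\beta(t-s)$ over each interval $(s,t]$. This characterises it uniquely in law as $\sqrt{\beta}\,B_t$ for a standard Brownian motion $B$, and the Kolmogorov continuity criterion gives a continuous modification, which is the version that sits in the support of any Skorokhod limit of the continuous‐in‐probability sequence $Z^n$.

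The remaining point is the independence of $B$ from the Gaussian core $(G^{(1)},G^{(2)})$. Here I would enlarge the finite-dimensional vectors appearing in Theorem \ref{finitedisttheo} by appending auxiliary first-chaos entries of the form $G^{(j)}_{s_\ell}$ and re-apply the multivariate Fourth Moment Theorem (Theorem \ref{4thmoment2}), which admits summands of different chaos orders. The increments of $Z^n$ lie in the second Wiener chaos $\Hi_2$, while $G^{(j)}_{s_\ell}\in \Hi_1$, so orthogonality of distinct chaoses gives $\E[G^{(j)}_{s_\ell}(Z^n_{b_k}-Z^n_{a_k})]=0$ identically in $n$. The marginal CLT for the $G^{(j)}_{s_\ell}$ components is trivial since they are already Gaussian, while the marginal CLT for the $Z^n$ components has been established in Theorem \ref{finitedisttheo}. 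The resulting joint Gaussian limit thus has block-diagonal covariance, so the limiting Brownian motion is independent of $(G^{(1)},G^{(2)})$ on the relevant extended space.

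The main obstacle in this closing step is a purely bookkeeping one: checking that the covariance conditions \eqref{fundamental} required by Theorem \ref{4thmoment2} are preserved when the first-chaos entries are appended. Everything that was difficult, namely the delicate contraction bounds yielding Gaussianity of the second-chaos components and the absolute summability in \eqref{mknmknm}, has been absorbed into Theorems \ref{finitedisttheo} and \ref{tightnesstheo}; what remains is the genuinely straightforward combinatorics described above.
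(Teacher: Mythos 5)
Your proposal is correct and follows essentially the same route as the paper: finite-dimensional convergence (Theorem \ref{finitedisttheo}) plus tightness (Theorem \ref{tightnesstheo}) combined via Billingsley's criterion, with independence of $B$ from the Gaussian core obtained by appending first-chaos components to the vectors in Theorem \ref{4thmoment2} and using orthogonality of distinct Wiener chaoses. The only cosmetic difference is that the paper augments with increments $G^{(j)}_{b_k}-G^{(j)}_{a_k}$ rather than point values $G^{(j)}_{s_\ell}$, which changes nothing.
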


\section{A central limit theorem for the realised covariation of the Brownian semistationary process}\label{S5}

The weak convergence result for the Gaussian core obtained in the previous section is the cornerstone needed to obtain the general central limit theorem for a Brownian semistationary process ${\bf Y}$, which includes stochastic volatility in each component, recall Definition \ref{2dimbss}.

We will need two additional assumptions:
\begin{assumption}
\label{uetsygvs}
We require that, for $k\in\{1,2\}$, the quantity:
\[
\frac{\sqrt{\E\left[\left(\int_{-\infty}^{(i-1)\Delta_n} \Delta g\tot k \sigma\tot k_s\,dW\tot k_s\right)^2\right]}}{\tau\tot k_n}=\frac{\sqrt{\int_0^\infty \left(g\tot k(s+\Delta_n)-g\tot k(s)\right)^2\E\left[\left(\sigma_{(i-1)\Delta_n-s}\tot k\right)^2\right]\,ds}}{\tau\tot k_n}
\]
is uniformly bounded in $n\in \N $ and $i \in\{1,\dots, n\}$.
\end{assumption}
\begin{example}
Assumption \ref{uetsygvs} is easily satisfied in many cases of interests, for example, if the stochastic volatility processes are second-order stationary.
\end{example}

\begin{assumption}
\label{hsgsgsfs}
The stochastic volatility process $\sigma\tot 1$ (resp. $\sigma\tot 2$)  has $\alpha\tot1$-\Holder  (resp. $\alpha\tot2$) continuous sample paths, for $\alpha\tot1 \in\left(\frac12,1\right)$.
Furthermore, both the kernel functions $g \tot 1$ and $g \tot 2$ satisfy the following property:
For $j \in \{1,2\}$, write:
\[
\pi\tot j_n (A):=\frac{\int_A \left(g\tot j(x+\Delta_n)-g \tot j (x)\right)^2\,ds}{\int_0^\infty \left(g\tot j(x+\Delta_n)-g \tot j (x)\right)^2\,ds}
\]
and note that $\pi_n\tot j$ are probability measures. We ask that there exists a constant $\lambda <-1 $ such that for any $\epsilon_n=O(n^{-\kappa})$, it holds that:
\[
\pi\tot j_n\left((\epsilon_n,\infty)\right)=O\left(n^{\lambda (1-\kappa)}\right).
\]
\end{assumption}
\subsection{Some remarks on stable convergence}
Before stating our central limit theorem result, we need to briefly introduce the notion of \emph{stable convergence}, which is the type of convergence that we will encounter, and which is typically used in inference for semimartingales. 
In this section we take definitions and results from   \cite{aldousmixing}  and from the survey on  uses and properties of stable convergence in \cite{podolskij2010understanding}.

\begin{definition}[Stable convergence]
Let a probability space $(\Omega, \mathscr F, \Prob)$ be fixed.
Suppose the sequence of variables $Y\tot n$ converges weakly to $Y$, denoted by: $$Y\tot n \Rightarrow Y.$$
We say that $Y\tot n$ \emph{converges stably} to $Y$ and write $Y\tot n \overset{st.}{\Rightarrow} Y$ if, for any $\mathscr F-$measurable set $B$, we have:
\[
\lim_{n\to \infty} \Prob\left( \{Y\tot n \leq x\} \cap B\right) = \Prob\left(\{Y\leq x\} \cap B\right),
\]
for a countable, dense set of points $x$.
\end{definition}

It is easy to see that $Y\tot n \overset{st.}{\Rightarrow} Y$, if and only if for any $f$ bounded Borel function, and for any $\mathscr F-$measurable \emph{fixed} variable $Z$:
\[
\lim_{n\to \infty}\E\left[ f\left(Y\tot n\right) Z\right]=\E\left[ f(Y) Z\right].
\]
Yet another characterisation is the following:$$Y\tot n \overset{st.}{\Rightarrow} Y \iff (Y\tot n, Z)\Rightarrow (Y,Z),$$  for any $\mathscr F-$measurable \emph{fixed} variable $Z$.

An obvious consequence of the previous characterisation is the following \emph{continuous mapping theorem} for stable convergence:
\begin{theorem}[Continuous mapping theorem]
\label{continuousmapping}
Suppose that $Y_n\overset{st.}{\Rightarrow} Y$, that $\sigma$ is any fixed $\mathscr F$-measurable random variable and that $g(x,y)$ is a continuous function of two variables. Then:
\[
g(Y_n,\sigma)\overset{st.}{\Rightarrow} g(Y,\sigma).
\]
\end{theorem}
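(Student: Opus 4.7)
The plan is to reduce the statement to the standard continuous mapping theorem for weak convergence by exploiting the joint-convergence characterisation of stable convergence stated immediately before the theorem, namely that $Y^{(n)}\stable Y$ is equivalent to $(Y^{(n)},Z)\Rightarrow (Y,Z)$ for every fixed $\mathscr F$-measurable random variable $Z$.

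First, I would unpack what needs to be verified: by that characterisation, proving $g(Y_n,\sigma)\stable g(Y,\sigma)$ amounts to showing that for every fixed $\mathscr F$-measurable random variable $Z$, one has the joint weak convergence
\[
\bigl(g(Y_n,\sigma),\,Z\bigr)\Rightarrow \bigl(g(Y,\sigma),\,Z\bigr).
\]
So the target is to upgrade the one-dimensional stable convergence of $Y_n$ to a joint statement involving both $\sigma$ and $Z$.

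Next, I would fix an arbitrary $\mathscr F$-measurable $Z$ and consider the pair $\widetilde Z:=(\sigma,Z)$, which is itself a fixed $\mathscr F$-measurable (vector-valued) random variable. Applying the joint characterisation of $Y_n\stable Y$ with $\widetilde Z$ in place of $Z$ yields
\[
(Y_n,\sigma,Z)\Rightarrow (Y,\sigma,Z).
\]
Now the map $\Phi\colon (y,s,z)\mapsto (g(y,s),z)$ is continuous by hypothesis on $g$, so the standard continuous mapping theorem for weak convergence applies and gives
\[
\bigl(g(Y_n,\sigma),Z\bigr)=\Phi(Y_n,\sigma,Z)\Rightarrow \Phi(Y,\sigma,Z)=\bigl(g(Y,\sigma),Z\bigr).
\]
Since $Z$ was arbitrary, invoking the joint-convergence characterisation in the reverse direction delivers $g(Y_n,\sigma)\stable g(Y,\sigma)$, as required.

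There is essentially no hard technical step; the only thing to be careful about is that the characterisation of stable convergence quoted in the text must hold for vector-valued fixed variables, not just scalar ones. This is standard (and follows, for instance, from the portmanteau-type equivalence with $\E[f(Y_n)Z']\to\E[f(Y)Z']$ for bounded Borel $f$ by approximating indicators of product sets), so the argument above is rigorous.
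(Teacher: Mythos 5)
Your proposal is correct and follows exactly the route the paper intends: the paper offers no written proof, simply asserting the theorem as ``an obvious consequence'' of the joint-convergence characterisation $Y_n\stable Y \iff (Y_n,Z)\Rightarrow (Y,Z)$, and your argument is precisely the careful spelling-out of that reduction (including the harmless extension to vector-valued $Z$). Nothing further is needed.
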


When the limiting variable $Y$ can be taken to be independent of $\mathscr F$, we say that the stable convergence is \emph{mixing}, and we write:
\[
Y\tot n\Rightarrow T \qquad \text{(mixing).}
\]

Finally, there is a useful criterion that can be used to establish  mixing convergence:
\begin{proposition}
\label{dsknskrnavrkn}
Suppose that $Y\tot n \Rightarrow Y$. Then the following are equivalent:
\begin{enumerate}
\item
$Y\tot n \Rightarrow Y \qquad \text{(mixing)},$
\item
For all fixed $k\in \N$ and $B\in \sigma \left(Y\tot 1\dots, Y\tot k\right)$ such that $\Prob (B)>0$,
\[ \lim_{n\to \infty} \Prob\left(Y\tot n\leq x \Big | B\right) = F_Y(x).\]
\end{enumerate}
\end{proposition}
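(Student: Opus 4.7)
The plan is to prove the two implications separately. The direction (1) $\Rightarrow$ (2) follows almost immediately by unwinding the definitions, while (2) $\Rightarrow$ (1) rests on the standard approximation of an arbitrary $\mathscr{F}$-measurable set by events from the algebra $\mathcal{A} := \bigcup_{k \in \N} \sigma(Y\tot 1, \ldots, Y\tot k)$.

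For (1) $\Rightarrow$ (2), first fix $k \in \N$ and $B \in \sigma(Y\tot 1, \ldots, Y\tot k) \subseteq \mathscr{F}$ with $\Prob(B) > 0$. Since $B$ is $\mathscr{F}$-measurable, the defining property of stable convergence gives, for every continuity point $x$ of $F_Y$,
\[
\lim_{n \to \infty} \Prob(\{Y\tot n \leq x\} \cap B) = \Prob(\{Y \leq x\} \cap B).
\]
The mixing hypothesis $Y \ci \mathscr{F}$ then factorises the right-hand side as $F_Y(x) \Prob(B)$, and dividing through by $\Prob(B)$ delivers exactly (2).

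For (2) $\Rightarrow$ (1), I would first rewrite (2) by multiplying through by $\Prob(B)$ to obtain
\[
\lim_{n \to \infty} \Prob(\{Y\tot n \leq x\} \cap B) = F_Y(x) \Prob(B), \qquad B \in \mathcal{A},
\]
with the $\Prob(B) = 0$ case being trivial. The core of the argument is then to extend this identity from the algebra $\mathcal{A}$ to the generated $\sigma$-algebra $\sigma(\mathcal{A}) = \mathscr{F}$. Given $A \in \mathscr{F}$ and $\epsilon > 0$ I would invoke the standard density result to pick $B_\epsilon \in \mathcal{A}$ with $\Prob(A \triangle B_\epsilon) < \epsilon$, and then use the triangle-inequality bound
\[
\bigl| \Prob(\{Y\tot n \leq x\} \cap A) - F_Y(x) \Prob(A) \bigr| \leq 2 \epsilon + \bigl| \Prob(\{Y\tot n \leq x\} \cap B_\epsilon) - F_Y(x) \Prob(B_\epsilon) \bigr|
\]
to reduce to the already-established convergence on $\mathcal{A}$. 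Letting $n \to \infty$ and then $\epsilon \to 0$ shows that $Y\tot n$ converges stably to $Y$ and that the limit has the factorised form $F_Y(x) \Prob(A)$ for every $A \in \mathscr{F}$, which is precisely mixing.

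The main obstacle is the density step: extending from $\mathcal{A}$ to $\mathscr{F}$ requires the convention $\mathscr{F} = \sigma(Y\tot n : n \in \N)$ so that $\mathcal{A}$ is an algebra generating $\mathscr{F}$; under this convention the symmetric-difference approximation follows from the monotone class theorem (equivalently, Carath\'eodory extension). If the ambient $\sigma$-algebra happens to be strictly larger---as in the CLT sections below, where an independent Brownian motion on an extended sample space is eventually introduced---then the proposition must be applied relative to $\sigma(Y\tot n : n \in \N)$ rather than the full $\mathscr{F}$. No further probabilistic subtlety is hidden; the argument is essentially measure-theoretic, which explains why \cite{aldousmixing} is cited rather than reproved here.
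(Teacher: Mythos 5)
The paper does not actually prove this proposition: it cites Proposition 2 of \cite{aldousmixing} and moves on, so there is no in-text argument to compare yours against. Your proof is essentially the standard argument underlying that reference and is correct as far as it goes: the direction (1) $\Rightarrow$ (2) is immediate from the definitions of stable and mixing convergence, and for (2) $\Rightarrow$ (1) the approximation in symmetric difference of sets of $\sigma(\mathcal A)$ by sets of the algebra $\mathcal A=\bigcup_{k}\sigma(Y\tot1,\dots,Y\tot k)$, combined with your $2\epsilon$ triangle-inequality bound, correctly yields $\Prob(\{Y\tot n\leq x\}\cap A)\to F_Y(x)\Prob(A)$ for all $A\in\sigma(Y\tot m\colon m\in\N)$, which is mixing relative to that $\sigma$-algebra.

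The one place where your treatment is weaker than the cited result is the closing caveat. You do not need to restrict the conclusion to the case $\mathscr F=\sigma(Y\tot m\colon m\in\N)$: the factorisation extends to every $A$ in a possibly larger ambient $\sigma$-algebra by the tower property. Writing $\mathcal G:=\sigma(Y\tot m\colon m\in\N)$ and noting that $1_{\{Y\tot n\leq x\}}$ is $\mathcal G$-measurable, one has
\[
\Prob\left(\{Y\tot n\leq x\}\cap A\right)=\E\left[1_{\{Y\tot n\leq x\}}\,\E\left[1_A\,\big|\,\mathcal G\right]\right],
\]
and since your argument, extended by linearity and uniform approximation from indicators to bounded $\mathcal G$-measurable integrands, gives $\E[1_{\{Y\tot n\leq x\}}Z]\to F_Y(x)\E[Z]$ for every bounded $\mathcal G$-measurable $Z$, the right-hand side converges to $F_Y(x)\Prob(A)$. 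This refinement is not cosmetic for the present paper: Proposition \ref{C2} invokes mixing relative to $\mathscr G$, the $\sigma$-algebra generated by the whole Gaussian core, which is strictly larger than the $\sigma$-algebra generated by the sequence of normalised sums, so the restricted version you propose would not suffice there.
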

\begin{proof}
See Proposition 2 in \cite{aldousmixing}.
\end{proof}
\subsection{The central limit theorem}
We are now in the position to formulate our key result: the central limit theorem for the suitably centred and scaled realised covariation of a bivariate Brownian semistationary process.

\begin{theorem}[Central limit theorem]
\label{CLT}
Let $\mathscr G$ be the sigma algebra generated by the Gaussian core $\mathbf G$, and let $\sigma\tot1$ and $\sigma\tot2$ be $\mathscr G-$measurable.
For the bivariate \BSS process, provided that Assumptions  \ref{squareassumm}, \ref{assss},   \ref{uetsygvs} and \ref{hsgsgsfs}   are satisfied with 
 $\delta\tot1,\delta\tot2\in(-\frac 12,\frac 14)\setminus\{0\}$, the following $\mathscr G$-stable convergence holds:
\begin{multline}
\left(\frac{1}{\sqrt n} \sum_{i=1}^{\floor {nt}} \incf ni1Y \incf ni2Y-\sqrt n\, \E\left[ \incf n11G \incf n12G\right]\int_0^t \sigma\tot1_s\sigma\tot2_s\,ds\right)_{t\in[0,T]}\\\underset{n\to\infty}{\overset{st.}{\Longrightarrow}} \left(\sqrt \beta \int_0^t \sigma_s \tot1 \sigma_s\tot 2\,dB_s\right)_{t\in[0,T]},\label{CLTFormula}
\end{multline} in the Skorokhod space $ \mathcal D[0,T]$, where $\beta={ C(1,1)}$, see equation  \eqref{mknmknm}.
Also, $B$ is Brownian motion, independent of $\mathscr F$ and defined on  an extension of the filtered probability space $(\Omega,\mathscr F,\mathscr F_t,\Prob)$.
\end{theorem}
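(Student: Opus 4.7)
The strategy is to reduce the bivariate \BSS case to the Gaussian core case by freezing the volatility on each high-frequency interval, and then to transfer the stable convergence from Theorem \ref{weakconGC} through the $\mathscr G$-measurability of $\sigma\tot 1, \sigma\tot 2$. First, split the increment along the natural ``memory'' vs.\ ``innovation'' decomposition
\begin{equation*}
\inc{n}{i}{j}{Y}=\int_{-\infty}^{\qaz}\left(g\tot j(i\Delta_n - s)-g\tot j(\qaz-s)\right)\sigma\tot j_s\,dW\tot j_s +\DDint g\tot j(i\Delta_n-s)\sigma\tot j_s\,dW\tot j_s,
\end{equation*}
and introduce the frozen-volatility surrogate $\widetilde{\inc{n}{i}{j}{Y}}:=\sigma\tot j_{\qaz}\inc{n}{i}{j}{G}$. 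The first step is to show that, for $j=1,2$,
\begin{equation*}
\frac{1}{\sqrt n}\sum_{i=1}^{\lfloor nt\rfloor}\left(\frac{\inc{n}{i}{1}{Y}}{\tau_n\tot 1}\frac{\inc{n}{i}{2}{Y}}{\tau_n\tot 2}-\sigma\tot1_{\qaz}\sigma\tot 2_{\qaz}\frac{\inc{n}{i}{1}{G}}{\tau_n\tot 1}\frac{\inc{n}{i}{2}{G}}{\tau_n\tot 2}\right)\xrightarrow{\;\Prob\;}0,
\end{equation*}
uniformly in $t\in[0,T]$. The innovation piece is handled by \Holder regularity of $\sigma\tot j$ (Assumption \ref{hsgsgsfs}, which gives $\alpha\tot j>\tfrac12$), yielding an $L^2$-error of order $\Delta_n^{2\alpha\tot j}\int_0^{\Delta_n}\bigl(g\tot j(s)\bigr)^2\,ds$. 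The memory piece is more delicate: using the probability measure $\pi_n\tot j$ of Assumption \ref{hsgsgsfs}, I split the integration region into $((i-1)\Delta_n-\epsilon_n,(i-1)\Delta_n)$ and its complement; on the near part \Holder regularity gives an $\epsilon_n^{\alpha\tot j}$ factor, on the far part Assumption \ref{hsgsgsfs} provides the decay $\pi_n\tot j((\epsilon_n,\infty))=O(n^{\lambda(1-\kappa)})$, and Assumption \ref{uetsygvs} ensures the normalising $\tau_n\tot j$ is the right scale. Choosing $\kappa$ appropriately and applying Cauchy--Schwarz to the resulting sum provides the negligibility.

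The next step is to rewrite the surviving principal part, using the centring from the statement, as
\begin{equation*}
\frac{1}{\sqrt n}\sum_{i=1}^{\lfloor nt\rfloor}\sigma\tot 1_{\qaz}\sigma\tot 2_{\qaz}\left(\frac{\inc{n}{i}{1}{G}}{\tau_n\tot 1}\frac{\inc{n}{i}{2}{G}}{\tau_n\tot 2}-\E\!\left[\frac{\inc{n}{1}{1}{G}}{\tau_n\tot 1}\frac{\inc{n}{1}{2}{G}}{\tau_n\tot 2}\right]\right)
\end{equation*}
plus a Riemann-sum error $\sqrt{n}\,\E[\cdot]\sum_i\sigma\tot 1_{\qaz}\sigma\tot2_{\qaz}\Delta_n-\sqrt n\,\E[\cdot]\int_0^t\sigma\tot1_s\sigma\tot 2_s\,ds$; the latter is $o_\Prob(1)$ because $\sigma\tot 1\sigma\tot 2$ has \Holder continuous paths of order $>\tfrac12$ and $\sqrt n \cdot \Delta_n^{\alpha}\to 0$ when combined with $\E[\cdot]$ being $O(1)$ by Theorem \ref{ohfinallyyes}.

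The final step is the stable convergence. Conditionally on $\mathscr G$ the factors $\sigma\tot1_{\qaz}\sigma\tot2_{\qaz}$ are deterministic, so one recognises the remaining sum as a Riemann-sum approximation in which the integrand is $\mathscr G$-measurable and the integrator $Z^n_t$ satisfies $Z^n_\cdot\Rightarrow\sqrt\beta\,B_\cdot$ in $\mathcal D[0,T]$ (Theorem \ref{weakconGC}). Since $B$ is independent of $\mathscr F\supseteq\mathscr G$, the joint convergence $(Z^n_\cdot,\mathbf G)\Rightarrow(\sqrt\beta B_\cdot,\mathbf G)$ is in fact $\mathscr G$-stable. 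Applying the continuous mapping theorem for stable convergence (Theorem \ref{continuousmapping}) to the integration functional, as in the standard argument for stable limit theorems of It\^o integrals against semimartingales driving sequences, identifies the limit as $\int_0^t\sigma\tot1_s\sigma\tot 2_s\,\sqrt\beta\,dB_s$. To carry out this last step rigorously one replaces $Z^n$ by its piecewise-constant version, integrates $\sigma\tot1\sigma\tot 2$ against the resulting simple integrator (for which the product is literally the Riemann sum above), and uses a functional continuous-mapping-type argument together with uniform tightness in Theorem \ref{tightnesstheo}.

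The main obstacle I expect is the memory error $A\tot j_i-\widetilde{A\tot j_i}$: controlling it requires the delicate balance between Assumptions \ref{uetsygvs} and \ref{hsgsgsfs} and the \Holder regularity of $\sigma\tot j$, and is the place where the condition $\alpha\tot j>\tfrac12$ becomes essential. All subsequent steps then follow standard stable-convergence machinery once the approximation is in place.
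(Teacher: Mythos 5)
Your reduction to the Gaussian core (the analogue of the paper's term $A^n_t$) and your treatment of the Riemann-sum error (the analogue of $D^n_t$) follow the same lines as the paper and are fine in outline. The genuine gap is in your final step. You freeze the volatility on the \emph{fine} grid and then claim that
\[
\frac{1}{\sqrt n}\sum_{i=1}^{\lfloor nt\rfloor}\sigma\tot 1_{\qaz}\sigma\tot 2_{\qaz}\left(\incf ni1G \incf ni2G-\E\left[\incf n11G \incf n12G\right]\right)
\]
converges to $\sqrt\beta\int_0^t\sigma\tot1_s\sigma\tot2_s\,dB_s$ by ``applying the continuous mapping theorem to the integration functional.'' No such continuous mapping argument is available: the map $(f,z)\mapsto\int f\,dz$ is not continuous on $\mathcal D[0,T]$, and weak convergence $Z^n_\cdot\Rightarrow\sqrt\beta B_\cdot$ does not imply convergence of Riemann sums $\sum_i f(t_{i-1})(Z^n_{t_i}-Z^n_{t_{i-1}})$ when the mesh of the weights shrinks at the \emph{same} rate $n$ at which $Z^n$ converges. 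Your suggested fix --- replace $Z^n$ by a piecewise-constant version so that ``the product is literally the Riemann sum above'' --- is circular: that Riemann sum is exactly the object whose limit is in question. This is precisely the trap the paper flags: the analogous step in \cite{corcuera2006power} (an invalid application of the mean value theorem) was wrong and was only repaired years later in \cite{corcuera2014asymptotics} using fractional-integration techniques.

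The paper avoids this by a two-scale blocking argument: it introduces a second, coarser grid of mesh $1/l$, freezes the volatility at the start of each coarse block, lets $n\to\infty$ for fixed $l$ (so that within each block the weight is a single fixed $\mathscr G$-measurable random variable, and Theorem \ref{continuousmapping} together with the mixing property from Proposition \ref{dsknskrnavrkn} legitimately applies to the finitely many blockwise sums), and only then lets $l\to\infty$. The price is an extra error term $A^{'''n,l}_t$ measuring the discrepancy between fine-grid and coarse-grid freezing, and controlling it (Proposition \ref{A'''}) is the hard analytic core of the proof, handled by invoking the weighted-random-sums result of \cite{corcuera2014asymptotics}. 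Your proposal omits both the coarse grid and this error term, so as written the argument does not close.
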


We note that the central limit theorem implies a weak law of large numbers, which we present next, cf.~also
\cite{GranelliVeraart2017a}. 
\begin{proposition}\label{LLN}
Assume that the conditions of Theorem \ref{CLT} hold.
Then 
\begin{align*}
\frac{\Delta_n}{c(\Delta_n)} \sum_{i=1}^{\floor{nt}} \Delta^n_iY\tot1 \Delta_i^nY\tot2 \overset{\Prob}{\to} \rho\int_0^t \sigma\tot1_s\sigma\tot2_s\,ds, \qquad \text{ as } n\to \infty.
\end{align*}
\end{proposition}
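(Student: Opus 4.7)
The plan is to derive the law of large numbers as an immediate consequence of the central limit theorem in Theorem \ref{CLT}. Namely, the CLT asserts the stable convergence of
\[
M_n(t) := \frac{1}{\sqrt n} \sum_{i=1}^{\floor {nt}} \incf ni1Y \incf ni2Y-\sqrt n\, \E\left[ \incf n11G \incf n12G\right]\int_0^t \sigma\tot1_s\sigma\tot2_s\,ds
\]
to a tight limit, so $M_n(t)=O_P(1)$ for each fixed $t$ and therefore $M_n(t)/\sqrt n \overset{\Prob}{\to} 0$. Using $\Delta^n_i Y^{(j)} = \tau^{(j)}_n\,\incf nijY$ together with $\Delta_n = 1/n$, this rewrites as
\[
\frac{\Delta_n}{\tau_n^{(1)}\tau_n^{(2)}}\sum_{i=1}^{\floor{nt}} \Delta_i^n Y^{(1)} \Delta_i^n Y^{(2)} = \E\left[ \incf n11G \incf n12G\right]\int_0^t \sigma\tot1_s\sigma\tot2_s\,ds + o_P(1).
\]

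Next, I would compute $\E[\Delta_1^n G^{(1)} \Delta_1^n G^{(2)}]$ in closed form. Splitting $\Delta^n_1 G^{(j)}$ into the two pieces corresponding to $s<0$ and $s\in[0,\Delta_n]$ as in Section \ref{DOSF}, applying the It\^o isometry with $dW^{(1)}_s dW^{(2)}_s = \rho\,ds$ and changing variables $u=-s$ in the first piece, one obtains
\[
\E\bigl[\Delta_1^n G^{(1)} \Delta_1^n G^{(2)}\bigr] = \rho \left\{\int_0^\infty\bigl(g^{(1)}(u+\Delta_n)-g^{(1)}(u)\bigr)\bigl(g^{(2)}(u+\Delta_n)-g^{(2)}(u)\bigr)\,du + \int_0^{\Delta_n} g^{(1)}(u) g^{(2)}(u)\,du\right\} = \rho\,c(\Delta_n),
\]
where $c$ is the function introduced in Lemma \ref{terrific}. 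Hence the coefficient on the right-hand side above equals $\E[\incf n11G \incf n12G] = \rho\,c(\Delta_n)/(\tau_n^{(1)}\tau_n^{(2)})$.

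Finally, I would switch from the scaling $\Delta_n/(\tau_n^{(1)}\tau_n^{(2)})$ to $\Delta_n/c(\Delta_n)$. Remark \ref{reass} gives $\tau_n^{(1)}\tau_n^{(2)} = \Delta_n^{\delta^{(1)}+\delta^{(2)}+1}\tilde L_0^{(1,2)}(\Delta_n)$, while Lemma \ref{terrific} gives $c(\Delta_n) = \Delta_n^{\delta^{(1)}+\delta^{(2)}+1} L_4^{(1,2)}(\Delta_n)$ with $L_4^{(1,2)}(x)/\tilde L_0^{(1,2)}(x) \to H$ as $x\to 0^+$, so the ratio $\tau_n^{(1)}\tau_n^{(2)}/c(\Delta_n) \to 1/H$ is bounded. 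Multiplying both sides of the previous displayed equation by this bounded factor turns the left-hand side into $(\Delta_n/c(\Delta_n))\sum_{i=1}^{\floor{nt}}\Delta^n_i Y^{(1)}\Delta^n_i Y^{(2)}$, the main term on the right into $\rho\int_0^t \sigma^{(1)}_s\sigma^{(2)}_s\,ds$, and leaves the remainder $o_P(1)$, which is precisely the claimed convergence.

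I do not foresee any serious obstacle, since the argument is a short corollary of the CLT. The two technical points to verify are the explicit identity $\E[\Delta_1^n G^{(1)} \Delta_1^n G^{(2)}]=\rho\,c(\Delta_n)$, a bookkeeping exercise with the It\^o isometry for the correlated Brownian measures, and the fact that $H\ne 0$ under the stated hypotheses so that $\tau_n^{(1)}\tau_n^{(2)}/c(\Delta_n)$ has a finite nonzero limit; both are immediate from Lemma \ref{terrific} and Remark \ref{reass}.
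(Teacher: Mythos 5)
Your proposal is correct and follows essentially the same route as the paper's own proof: extract tightness (hence an $o_P(1)$ remainder after dividing by $\sqrt n$) from the stable convergence in Theorem \ref{CLT}, identify $\E[\Delta^n_1 G^{(1)}\Delta^n_1 G^{(2)}]=\rho\,c(\Delta_n)$ via the It\^o isometry, and rescale from $\tau^{(1)}_n\tau^{(2)}_n$ to $c(\Delta_n)$ using Lemma \ref{terrific} and Remark \ref{reass}. If anything, you are slightly more explicit than the paper about why the final change of normalisation is harmless, namely that $\tau^{(1)}_n\tau^{(2)}_n/c(\Delta_n)$ stays bounded because $L_4^{(1,2)}/\tilde L_0^{(1,2)}\to H$.
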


So Theorem \ref{CLT} implies a weak law of large numbers. 
It is to be stressed though, that the law of large numbers can be formulated in a more general way, modulo some different assumptions on the volatility processes.  We refer to the discussion in \cite{GranelliVeraart2017a} for the details. 
In particular, for the weak law of large numbers to hold,  we do not need the restriction that $\delta\tot 1, \delta\tot 2\in\left(-\frac 12,\frac 14\right)\setminus\{0\}$, but we can have the whole range $\delta\tot 1, \delta\tot 2\in\left(-\frac 12,\frac 12\right)\setminus\{0\}$. 
On the other hand, we remark that the weak law of large numbers formulated in  \cite{GranelliVeraart2017a} required the kernel functions to be decreasing, and we do not have such a restriction for the central limit theorem.

\section{Conclusion}\label{S6}
In this article we have employed techniques that were successfully used in the univariate case for the power, multipower, and bipower variation of the \BSS process and of Gaussian processes, (as appearing in \cite{barndorff2011multipower}, \cite{barndorff2009power}, \cite{barndorff2008bipower}, \cite{corcuera2013asymptotic}) to show a central limit theorem for the realised covariation of the bivariate Gaussian core and the \BSS process. 

This result, apart from being interesting from a purely mathematical point of view, can be viewed as the starting point of the use of multivariate \BSS processes in stochastic modelling. The central limit theorem  unlocks inference on the dependence parameter for the multivariate \BSS process.
There are still parts  of such a multivariate theory that  need to be developed in the future. For instance, one interesting aspect would be to   allow for  the correlation coefficient to be stochastic. Another direction of future research would  include extending our results from the realised covariation to more general functionals, obtaining a fully multidimensional theory of multipower variation of the \BSS process.
 Also, one could investigate whether similar results can be obtained for other forms of volatility modulated Gaussian processes outside the semimartingale setting.

\section{Proofs for the Gaussian core}\label{S7}
\subsection{Proof of Lemma \ref{terrific}}
We start off by proving the very useful Lemma \ref{terrific}.
\begin{proof}[Proof of Lemma \ref{terrific}]
Note that we can express $c(x)$ as follows:
\begin{multline*}
c(x) =\int_0^{x} g\tot1(s)g\tot2(s)\,ds+\int_0^\infty g\tot1(s+x)g\tot2(s+x)\,ds \\- \int_0^\infty g\tot1(s)g\tot2(s+x)\,ds-\int_0^\infty g\tot1(s+x)g\tot2(s)\,ds + \int_0^\infty g\tot 1 (s) g\tot 2 (s)\,ds.
\end{multline*}
After a change of variable, we can write the second integral as: $ \int_x^\infty g\tot1(s)g\tot2(s)\,ds$, and therefore we can simplify the expression as:
\begin{align}\begin{split}
\label{carlotta}
c(x) &=2\int_0^{\infty} g\tot1(s)g\tot2(s)\,ds - \int_0^\infty g\tot 1(s) g\tot 2(s+x)\,ds - \int_0^\infty g\tot 1 (s+x) g \tot 2 (s)\,ds\\
&= \int_0^{\infty} g\tot1(s)(g\tot2(s)-g\tot2(s+x))\,ds
+\int_0^{\infty} g\tot2(s)(g\tot1(s)-g\tot1(s+x))\,ds.
\end{split}\end{align}
 Assumption \ref{assss} implies that 
\begin{align*}
c(x) =  x^{\delta\tot 1+\delta\tot 2+1}\frac{1}{2}\left(L_0\tot {1,2}(x) +  L_0\tot {2,1}(x) \right).
\end{align*}
Note that $L_4\tot {1,2}(x):=\frac{1}{2}\left(L_0\tot {1,2}(x) +  L_0\tot {2,1}(x)\right)$ is itself a slowly varying function  and  the constant $H=\frac{1}{2}\left(H\tot{1,2}+H\tot {2,1}\right)$.
\end{proof}
Let us next provide the details of the computation of $H$ for the case of two Gamma kernels, as discussed in Example \ref{gammagamma}.
\subsection{Proof of Example \ref{gammagamma}}
\label{proofgammagamma}
\begin{proof}[Proof of Example \ref{gammagamma}]
We start with the expression for $c(x)$ given in \eqref{carlotta}:
\[
c(x) =2\int_0^{\infty} g\tot1(s)g\tot2(s)\,ds - \int_0^\infty g\tot 1(s) g\tot 2(s+x)\,ds - \int_0^\infty g\tot 1 (s+x) g \tot 2 (s)\,ds.
\]
If we plug in the explicit epression for the Gamma kernel, we obtain:
\begin{multline}
\label{grett}
c(x) =2\int_0^{\infty} s^{\delta \tot 1+\delta \tot 2} e^{-(\lambda\tot1+\lambda\tot 2)s}\,ds - \int_0^\infty (s^{\delta\tot1}e^{-\lambda\tot1 s} (s+x)^{\delta\tot 2}e^{-\lambda\tot2 (s+x)}\,ds \\- \int_0^\infty (s+x)^{\delta\tot1}e^{-\lambda\tot1 (s+x)} s^{\delta\tot 2}e^{-\lambda\tot2 s}\,ds.
\end{multline}
The first integral can be easily evaluated:
\[
2\int_0^{\infty} g\tot1(s)g\tot2(s)\,ds = 2\frac{ \Gamma (\delta\tot1+\delta\tot2+1)}{(\lambda\tot 1+\lambda\tot2)^{\delta\tot 1+\delta\tot2 +1}}.
\]
The other two integrals can be computed analytically in terms of a power series using formula (12) in \cite{bateman1954tables}[p. 234].
We will use the notation:  $(a)_n=a(a+1)\dots(a+n-1):=\prod_{k=0}^{n-1} (a+k)=\frac{\Gamma(a+n)}{\Gamma(a)}$, with $(a)_0:=1$.

For the first one of the two, for example, the final result is:
\begin{multline}
\label{beastsolved}
K\tot1_1 e^{-\lambda\tot 2 t} x^{\delta \tot 1 +\delta\tot 2 +1}  \sum_{k=0}^\infty \frac{(1+\delta \tot 1)_k}{(\delta \tot 1+ \delta \tot 2+2)_k} \frac {\left((\lambda\tot 1 + \lambda \tot 2)x\right)^k}{k!} \\+ K_2 e^{-\lambda\tot 2 x}\sum_{k=0}^\infty \frac{(\delta \tot 2)_k}{(\delta\tot 1+\delta \tot 2)_k} \frac{((\lambda \tot i+\lambda \tot j)x)^k}{k!},
\end{multline}
for  constants $K_1\tot1,K_2$:
\[
K\tot1_1=\frac{\Gamma(\delta\tot 1+1)\Gamma(-1-\delta\tot 1 -\delta \tot 2)}{\Gamma(-\delta\tot1)}, \qquad
K_2=\frac{ \Gamma (\delta\tot1+\delta\tot2+1)}{(\lambda\tot 1+\lambda\tot2)^{\delta\tot 1+\delta\tot2 +1}}.
\]
Swapping the variables $\delta\tot 1, \delta \tot2$, we obtain the result for the second integral. Summing up, we conclude that \eqref{grett} equals:
\[
\begin{split}
c(x)&=2 K_2
- x^{\delta \tot 1 +\delta\tot 2 +1} \left(K\tot1_1 e^{-\lambda\tot 1 x} f\tot 1(x) +K\tot2_1 e^{-\lambda\tot 2 x} f\tot 2(x) \right)\\ &- K_2\left(e^{-\lambda\tot 1 x }f\tot 3(x)+ e^{-\lambda\tot 2 x}f\tot4(x) \right) ,
\end{split}
\]
where $f\tot 1, f\tot 2$ are power series such that $\lim_{x\to 0} f\tot 1(x) =  \lim_{x\to 0} f\tot 2(x) = 1$, while:
\[
f\tot3 (x)= \sum_{k=0}^\infty \frac{(\delta\tot 1)_k}{(\delta\tot 1+\delta\tot 2)_k}\frac{\left((\lambda\tot1+\lambda\tot 2)x\right)^k}{k!},\qquad f\tot4 (x)= \sum_{k=0}^\infty \frac{(\delta\tot 2)_k}{(\delta\tot 1+\delta\tot 2)_k}\frac{\left((\lambda\tot1+\lambda\tot 2)x\right)^k}{k!}.
\]
Using the Taylor expansion: $e^{-\lambda\tot i x}= 1-\lambda\tot i x +o(x)$, some of the terms simplify to give:
\[
\begin{split}
c(x) &= - x^{\delta \tot 1 +\delta\tot 2 +1} \left(K\tot1_1 e^{-\lambda\tot 1 x} f\tot 1(x) +K\tot2_1 e^{-\lambda\tot 2 x} f\tot 2(x) \right)+ O(x^2)\\
&   =x^{\delta \tot 1 +\delta\tot 2 +1} \left(-K\tot1_1 e^{-\lambda\tot 1 x} f\tot 1(x) -K\tot2_1 e^{-\lambda\tot 2 x} f\tot 2(x)+f\tot 5(x) \right),
\end{split}
\]
and we know that $f\tot 5(x) = O\left( x^{1-\delta\tot1-\delta\tot2 }\right)$.
If we call $L_4\tot{1,2}(x)=-K\tot1_1 e^{-\lambda\tot 1 x} f\tot 1(x) -K\tot2_1 e^{-\lambda\tot 2 x} f\tot 2(x)+f\tot 5(x)$, then $L_4\tot{1,2}(x)$ is continuous and we also have:
\[
\lim_{x\to 0+} L_4\tot{1,2}(x) = -K\tot1_1 - K\tot2_1,
\]
which in particular implies that $L_4\tot{1,2}(x)$ is slowly varying at zero.

We know by \cite{barndorff2011multipower} that:
\[
\lim_{x\to 0+}L_0\tot {i,i}(x)=2^{-1-2\delta\tot i}\frac{\Gamma \left(\frac 12- \delta\tot i\right)}{\Gamma\left(\frac 32+\delta\tot i\right)},
\]
and so:
\[
\lim_{x\to 0+}\tilde L_0\tot{1,2}(x) = K_0 := 2^{-1-\delta\tot1-\delta\tot2}\sqrt{\frac{\Gamma(\frac 12-\delta\tot i)\Gamma(\frac 12-\delta\tot j)}{\Gamma(\frac 32+\delta\tot i)\Gamma(\frac 32+\delta\tot j)}}.
\]
Finally, we can then find an expression for $H$:
\begin{multline*}
H=\frac {-K\tot1_1-K\tot2_1}{K_0}=\left( -\frac{\Gamma(\delta\tot 1+1)\Gamma(-1-\delta\tot 1 -\delta \tot 2)}{\Gamma(-\delta\tot1)}-\frac{\Gamma(\delta\tot 2+1)\Gamma(-1-\delta\tot 1 -\delta \tot 2)}{\Gamma(-\delta\tot2)}\right)\\2^{1+\delta\tot1+\delta\tot2}\sqrt{\frac{\Gamma(\frac 32+\delta\tot i)\Gamma(\frac 32+\delta\tot j)}{\Gamma(\frac 12-\delta\tot i)\Gamma(\frac 12-\delta\tot j)}}.
\end{multline*}
\end{proof}

\subsection{Proof of Theorem \ref{ohfinallyyes}}\label{SectProof_ohfinallyyes}

The uniform bound on the covariances $r^{(n)}_{i,j}(k)$ that we prove on Theorem \ref{ohfinallyyes} is a fundamental analytical result that allows us to sit within the reach of some powerful results of Malliavin calculus. In this section we give the proof of that theorem.
Let us start off with an elementary result.
\begin{lemma}\label{TaylorLemma}
For a $C^2$ function  $u$, and $h>0$:
\[
u(x+h)-2u(x)+u(x-h)=h^2 u''(\zeta),
\]
where $\zeta \in (x-h,x+h)$.
\end{lemma}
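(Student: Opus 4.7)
The plan is to reduce the identity to the Lagrange form of Taylor's theorem applied to a symmetric second-difference function. First I would introduce the auxiliary map $F\colon[0,h]\to\R$ defined by $F(t):=u(x+t)-2u(x)+u(x-t)$. A short computation shows that $F$ is $C^2$ with $F(0)=0$, that $F'(t)=u'(x+t)-u'(x-t)$ so $F'(0)=0$, and that $F''(t)=u''(x+t)+u''(x-t)$, since $u\in C^2$.

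Next I would apply Taylor's theorem with the Lagrange remainder to $F$ expanded around the origin: there exists $\eta\in(0,h)$ with $F(h)=F(0)+hF'(0)+\tfrac{h^2}{2}F''(\eta)=\tfrac{h^2}{2}\bigl(u''(x+\eta)+u''(x-\eta)\bigr)$. Thus the left-hand side of the claimed identity equals $h^2$ times the arithmetic average of $u''$ evaluated at the two symmetric points $x\pm\eta$.

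To collapse this two-point average into a single evaluation, I would invoke the intermediate value theorem. Since $u''$ is continuous on the closed interval $[x-\eta,x+\eta]$, the average $\tfrac{1}{2}\bigl(u''(x+\eta)+u''(x-\eta)\bigr)$ lies between $\min_{[x-\eta,x+\eta]}u''$ and $\max_{[x-\eta,x+\eta]}u''$, so there exists $\zeta\in[x-\eta,x+\eta]$ with $u''(\zeta)=\tfrac{1}{2}\bigl(u''(x+\eta)+u''(x-\eta)\bigr)$. Because $\eta<h$, this $\zeta$ lies in the open interval $(x-h,x+h)$, and substituting back yields $u(x+h)-2u(x)+u(x-h)=h^2u''(\zeta)$, which is the statement.

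There is no real obstacle here: the only step that goes beyond a mechanical application of Taylor's formula is the use of the intermediate value theorem to pass from the symmetric average of $u''$ to a single value, which in turn is what forces the $C^2$ assumption rather than mere twice-differentiability of $u$.
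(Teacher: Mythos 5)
Your proof is correct and follows essentially the same route as the paper: Taylor's formula with Lagrange remainder produces the average $\tfrac12\bigl(u''(x+\eta)+u''(x-\eta)\bigr)$, which the intermediate value theorem then collapses to a single value $u''(\zeta)$ with $\zeta\in(x-h,x+h)$. The only cosmetic difference is that you package the two expansions into the auxiliary function $F(t)=u(x+t)-2u(x)+u(x-t)$ and expand once, whereas the paper expands $u(x\pm h)$ separately and adds; the substance is identical.
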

\begin{proof}[Proof of Lemma \ref{TaylorLemma}]
Simply write Taylor's formula twice, with Lagrange remainder:
\[
\begin{cases}
u(x+h)=u(x)+hu'(x)+\frac12 h^2u''(\zeta^+), \quad \text{$\zeta^+\in (x,x+h)$},\\
u(x-h)=u(x)-hu'(x)+\frac12 h^2u''(\zeta^-), \quad \text{$\zeta^-\in (x-h,x)$}.
\end{cases}
\]
Adding the two equations:
\[
u(x+h)-2u(x)+u(x-h)= h^2\left(\frac{u''(\zeta^+)+u''(\zeta^-)}{2}\right).
\]
By continuity of $u''$ and the intermediate value theorem: $$\frac{u''(\zeta^+)+u''(\zeta^-)}{2}\in u''\left((\zeta^-,\zeta^+))\subseteq u''\left((x-h,x+h\right)\right),$$ which implies the result.
\end{proof}
We have now  the tools to tackle the proof of Theorem \ref{ohfinallyyes}.
\begin{proof}[Proof of Theorem \ref{ohfinallyyes}]

The objective in the section is to show that we can bound:
\begin{equation}
\label{cuas}
\abs{r^{(n)}_{i,j}(k)}\leq r(k),
\end{equation}
uniformly in $n$, for all choices of $i,j$.
In order to do so, recall the functions mapping $\R^+$ into $\R^+$, with 
$i,j\in\{1,2\}$:
\[
\bar R\tot {i,j} (t) :=\E\left[\left(G\tot j _{t}-G \tot i _0\right)^2\right].
\]
We need to show that this function is well defined. More generally, note that for the Gaussian core, we have for any $u\in \R$:
\[
\begin{split}
&\E\left[\left(G\tot j_{u+t}-G\tot i_u\right)^2\right]=\E\left[\left(\int_{-\infty}^{u+t} g \tot j (u+t-s) \, dW\tot j_s- \int_{-\infty}^t g\tot i(t-s)\,dW\tot i_s\right)^2\right]\\
&=\int_{-\infty}^{u+t}\left(g\tot j (u+t-s)\right)^2\,ds+\int_{-\infty}^t\left(g\tot i(t-s)\right)^2\,ds-2\int_{-\infty}^t g\tot i (t-s)g\tot j (u+t-s)\rho_{i,j}\,ds\\
&= \int_0^\infty \left(g\tot j(y)\right)^2\,dy+ \int_0^\infty \left(g\tot i (y)\right)^2\,dy-2 \int_0^\infty g\tot i (y) g\tot j (y+t)\rho_{i,j}\,dy\\
&=\norm{g\tot i}_{L^2}^2+\norm{g\tot j}_{L^2}^2-2\E\left[G \tot i_0G\tot j_{t}\right],
\end{split}
\]
 which is indeed a function of $t$ only.
It is straightforward to find the connection between $r^{(n)}_{i,j}(k)$ and $\bar R\tot {i,j}(k)$, when $k\in \N$: 
\begin{align}
\nonumber r^{(n)}_{i,j}(k)&=\E\left[\frac{\Delta_1^n G\tot i}{\tau_n \tot i} \frac{\Delta^n_{1+k} G \tot j}{\tau_n\tot j}\right]=\frac{1}{\tau\tot i_n\tau \tot j _n}\E\left[\left(G\tot i _{\frac 1n}- G\tot i_0\right)\left(G\tot j_{\frac{1+k}{n}}-G\tot j_{\frac kn}\right)\right]\\
&\nonumber=\frac{1}{\tau\tot i_n\tau \tot j _n}\left(\E\left[ G\tot i_{\frac 1n}G\tot j_{\frac{1+k}{n}}\right]-\E\left[G\tot i_{\frac 1n} G \tot j_{\frac kn}\right] - \E\left[G \tot i _0G \tot j_{\frac{1+k}{n}}\right]+ \E\left[G\tot i_0 G\tot j_{\frac kn}\right]\right)\\
&\label{connection}=\frac{1}{\tau\tot i_n\tau \tot j _n}\left(-\bar R \tot{i,j} \left(\frac kn\right) +\frac 12 \bar R \tot{i,j}\left(\frac{k-1}{n}\right)+\frac 12 \bar R \tot {i,j}\left(\frac{k+1}{n}\right)\right)\\
\nonumber &= \frac{1}{2\tau\tot i_n\tau \tot j _n}\left(-2\bar R \tot{i,j} \left(\frac kn\right) + \bar R \tot{i,j}\left(\frac{k-1}{n}\right)+\bar R \tot {i,j}\left(\frac{k+1}{n}\right)\right)\\
\label{connection2}&= \frac{1}{2n^2\tau\tot i_n\tau \tot j _n}\left(\bar R \tot{i,j}\right)''\left(\frac kn+ \frac{\theta^n_k}{n}\right),
\end{align}
for some $\abs{\theta^n_k}<1$, thanks to the  elementary result stated in Lemma \ref{TaylorLemma}.

The connection between $r\tot n_{i,j}$ and $\bar R\tot{i,j}(t)$ was derived in \eqref{connection} and \eqref{connection2}:
\begin{equation}
\label{ppopl}
r\tot n_{i,j}(k)=\frac{-2\bar R\tot{i,j}(\frac kn)+\bar R\tot{i,j}(\frac{k+1}{n})+\bar R\tot{i,j}(\frac{k-1}{n})}{2\sqrt{ \bar R\tot{i,i}(\frac 1n)  \bar R \tot{j,j}(\frac 1n) }}= \frac{1}{2n^2\tau\tot i_n\tau \tot j _n}\left(\bar R \tot{i,j}\right)''\left(\frac kn+ \frac{\theta^n_k}{n}\right),
\end{equation}
as well as: 
\[
\tau_n\tot i =\sqrt{\bar R\tot{i,i}\left(\frac 1n\right)}= \sqrt{\E\left[\left(G\tot i_{\frac 1n}-G\tot i_0\right)^2\right]}=\left(\frac 1n\right)^{
\frac12\left(2\delta\tot i+1\right)}\sqrt{L_0\tot i \left(\frac 1n\right)}.
\]

Let us now show  the uniform bound \eqref{bound} and the limit result for the case when $k\in \N$. 
 For $k\in \mathbb{N}$,  we go back to the second equality in \eqref{ppopl}, and deduce that: {
\[
\abs{r\tot n_{i,j}(k)}=\left|\rho_{i,j}\frac{ \left(k+\theta^n_k\right)^{\delta\tot i+\delta\tot j-1}L_2^{(i,j)}\left(\frac kn + \frac{\theta^n_k }{n}\right)}{\tilde L_0^{(i,j)}(\frac 1n)}\right|\leq \left|\frac{ \left(k+\theta^n_k\right)^{\delta\tot i+\delta\tot j-1}L_2^{(i,j)}\left(\frac kn + \frac{\theta^n_k }{n}\right)}{\tilde L_0^{(i,j)}(\frac 1n)}\right|.
\]
Note that for $k>1$, we deduce from $\theta_k^n\in (-1,1)$ and $\delta\tot i+\delta\tot j-1<-\epsilon <0$ that
\begin{align*}
\left(k+\theta^n_k\right)^{\delta\tot i+\delta\tot j-1}\leq (k-1)^{\delta\tot i+\delta\tot j-1}.
\end{align*}
}

Now, if  $2\leq k< \floor{n^{1-b}},$    then $\frac kn + \frac{\theta^n_k }{n} \in \left(\frac 2n+  \frac{\theta^n_k }{n} ,\frac{\floor{n^{1-b}}-1}{n}
+\frac{\theta^n_k }{n}\right)\subset \left(\frac 1n,\frac{\floor{n^{1-b}}}{n}\right)\subset  \left(\frac 1n,\frac{1}{n^b}\right) $ and hence, 
the bound \eqref{ass4} in Assumption \ref{assss} applies and we obtain that
\[
\frac{L_2^{(i,j)}\left(\frac kn + \frac{\theta^n_k }{n}\right)}{\tilde L_0^{(i,j)}(\frac 1n)}
\]
is bounded close to the origin for $n$ big enough. 

{
If instead $\floor{n^{1-b}} \leq k \leq n$, then, for all $\epsilon >0$, and any $\delta<\epsilon(1-b)$ there exists a constant $C(\delta)>0$ such that 
\begin{align}\begin{split}
\label{jkgfhdfk}
\abs{r\tot n_{i,j}(k+1)}&=\left|\frac{ \left(k+1+\theta^n_{k+1}\right)^{\delta\tot i+\delta\tot j-1}L_2^{(i,j)}\left(\frac{k+1}{n} + \frac{\theta^n_{k+1} }{n}\right)}{\tilde L_0^{(i,j)}(\frac 1n)}\right|\\
&\leq  C(\delta) \frac {k^{\delta\tot i+\delta\tot j-1+\epsilon-\delta}}{n^{\epsilon(1-b)-\delta}} \frac{1}{\tilde L_0^{(i,j)}(\frac 1n)}.
\end{split}
\end{align}
We used the fact that for any $\delta, t > 0$, there exists a constant $C$ depending on $\delta$ (and $t$) only such that  $\abs{L_2(x)}\leq C(\delta)x^{-\delta}$, in a neighborhood $x\in(0,t]$.
}

Observe now that $M^{(i,j)}(n):=\frac{1}{\tilde L_0^{(i,j)}(\frac 1n)}$ is a slowly varying function at $\infty$. Indeed, for any $\lambda>0$:
\[
\lim_{n\to\infty} \frac{M^{(i,j)}(\lambda n)}{M^{(i,j)}(n)}=\lim_{n\to \infty} \frac{\tilde L_0^{(i,j)}(\frac {1}{\lambda n})}{\tilde L_0^{(i,j)}(\frac {1}{ n})}=1.
\]
But since $M^{(i,j)}$ is slowly varying, there exists a constant $\tilde C$ such that,  by Potter's bound:
\[
M(n)\leq \tilde C n^{-\epsilon(1-b)+\delta} \iff  \frac{1}{n^{\epsilon(1-b)-\delta}\tilde L_0^{(i,j)}(\frac 1n)}\leq \tilde C,
\]
that gives us
\[
\abs{r\tot n_{i,j}(k+1)}\leq \tilde C { C(\delta)} k^{\delta\tot i+\delta\tot j-1+\epsilon}.
\]
As $\delta$ is arbitrary, {set $C_1=\tilde C C(\delta)$ for any $\delta>0$. }

Next, let us prove the limit result. To this end, we will use the first equality in \eqref{ppopl} to show the convergence in \eqref{thisconverges}. Using the expression \eqref{mfkjfkfid} from Assumptions \ref{assss}, we get for $k\in \N$:
\begin{align}
\label{klsdlask}
&r\tot n_{i,j}(k)\\&=\rho_{i,j}\frac{-2 k^{\delta\tot i+\delta\tot j+1}L_0^{(i,j)}(\frac kn)+(k-1)^{\delta\tot i+\delta\tot j+1}L_0^{(i,j)}(\frac{k-1}{n})+(k+1)^{\delta\tot i+\delta\tot j+1}L_0^{(i,j)}(\frac{k+1}{n})}{2\tilde L_0^{(i,j)}(\frac 1n)}.\nonumber
\end{align}
Because of \eqref{ewiue}, we get in the limit:
\[
\lim_{n\to \infty} r\tot n_{i,j}(k) =\rho_{i,j}  H^{(i,j)} \frac{\left(-2k ^{\delta\tot i+\delta\tot j+1}+(k-1)^{\delta\tot i+\delta\tot j+1}+(k+1)^{\delta\tot i+\delta\tot j+1} \right)}{2}.
\]

{ Let us now consider the case when $k=0$. We need to show that $\lim_{n\to \infty} r\tot n_{i,j}(0)=\rho H$ for $i\ne j$ and $\lim_{n\to \infty} r\tot n_{i,j}(0)=1$ for $i=j$.
First, suppose that $i=j$. Then  
$r\tot n_{i,j}(0)= 1$, and hence $\lim_{n\to \infty}r\tot n_{i,j}(0)=1$.
Next, assume that $i\ne j$. Then 
\begin{align*}
r\tot n_{i,j}(0)= \rho \frac{\zeta_n}{\xi_n},
\end{align*}
where
\begin{align}\begin{split}\label{zetaxi}
\zeta_n &=\int_0^{\Delta_n} g\tot1(s)g\tot2(s)\,ds+ \int_0^{\infty} \left(g\tot1(s+\Delta_n)-g\tot1(s)\right)\left(g\tot2(s+\Delta_n)-g\tot2(s)\right)\,ds,
\\
\xi_n &=\left[\left(\int_0^{\Delta_n} \left(g\tot1(s)\right)^2\,ds+\int_0^{\infty}\left( g\tot1(s+\Delta_n)-g\tot1(s)\right)^2\,ds\right) \right.
\\
& \qquad \cdot \left. \left(\int_0^{\Delta_n} \left(g\tot2(s)\right)^2\,ds+\int_0^{\infty}\left( g\tot2(s+\Delta_n)-g\tot2(s)\right)^2\,ds\right)\right]^{1/2}.
\end{split}
\end{align}
Using Lemma \ref{terrific}, we get:
\begin{align*}
\zeta_n &=\Delta_n^{\delta^{(1)}+\delta^{(2)}+1}L_4^{(1,2)}(\Delta_n).
\end{align*}
Also, Remark \ref{reass} implies that
\begin{align*}
\xi_n=\Delta_n^{\delta^{(1)}+\delta^{(2)}+1}\sqrt{L_0^{(1)}(\Delta_n)L_0^{(2)}(\Delta_n)}=\Delta_n^{\delta^{(1)}+\delta^{(2)}+1} \tilde L_0^{(1,2)}(\Delta_n).
\end{align*}
Then equation \eqref{terrific2} in 
Lemma \ref{terrific} ensures that
$\lim_{n\to\infty}\zeta_n/\xi_n=H$ and hence 
$\lim_{n\to\infty}r_{i,j}^n(0)=\rho H$ for $i\ne j$.
Finally, we remark that since $r\tot n_{i,j}(0)$ converges, there exists a positive constant $C_2$ such that $|r\tot n_{i,j}(0)|\leq C_2$ for all $n\in \N$. 
So, we can conclude that \eqref{bound} holds with $C=\max\{1,C_1,C_2\}$. }

\end{proof}

\subsection{Limiting covariance}
Our strategy for proving the central limit theorem for the Gaussian core relies on 
Theorem \ref{4thmoment2}, which  gives us the fundamental tool for proving convergence in distribution to a Gaussian variable in this setting. 

In order to be able to apply Theorem \ref{4thmoment2} to prove the central limit theorem later on, we must first 
compute the limiting covariance: i.e.~we need to compute $\lim_{n\to\infty} 
\E\left[I_2(f_{r,n})I_2(f_{s,n})\right]$, where: $$f_{r,n}:=\frac{1}{\sqrt 
n}  \sum_{i=\floor {n a_r}+1}^{\floor {n b_r}}\frac{\DD G\tot1}{\tau\tot1_n} 
\widetilde{\otimes} \frac{\DD G\tot2}{\tau\tot2_n}.$$
We start with the case $r\ne s$:
\begin{multline}
\E\left[I_2(f_{r,n})I_2(f_{s,n})\right]=2\langle f_{r,n},f_{s,n}\rangle_{\Hi^{\otimes 2}}\\=2 \langle \frac{1}{\sqrt n}  \sum_{i=\floor {n a_r}+1}^{\floor {n b_r}}\frac{\DD G\tot1}{\tau\tot1_n} \widetilde{\otimes} \frac{\DD G\tot2}{\tau\tot2_n}, \frac{1}{\sqrt n}  \sum_{j=\floor {n a_s}+1}^{\floor {n b_s}}\frac{\Delta^n_j G\tot1}{\tau\tot1_n} \widetilde{\otimes} \frac{\Delta_j^n G\tot2}{\tau\tot2_n} \rangle_{\Hi^{\otimes 2}}.
\end{multline}
Without loss of generality, we will choose $r=1,s=2,a_1=0,b_1=a_2=1, b_2=2$, obtaining:
\begin{equation}
\label{uywfqjf}
\frac 2n \langle \si \frac{\DD G\tot1}{\tau\tot1_n} \widetilde{\otimes} \frac{\DD G\tot2}{\tau\tot2_n}, \sum_{j=n+1}^{2n} \frac{\Delta_j^n G\tot1}{\tau\tot1_n} \widetilde{\otimes} \frac{\Delta_j^n G\tot2}{\tau\tot2_n}\rangle_{\Hi^{\otimes 2}}.
\end{equation}
Now, let  $k=j-i$.
Also recall the definition $r^{(n)}_{a,b}(k):=\E\left[ \frac{ \Delta^n_1 G\tot a}{\tau_n\tot a} \frac{\Delta^n_{1+k} G\tot b}{\tau_n\tot b}\right]$.
Then, the single scalar product equals:
\begin{align*}
&\quad\langle  \frac{\DD G\tot1}{\tau\tot1_n} \widetilde{\otimes} \frac{\DD G\tot2}{\tau\tot2_n}, \frac{\Delta^n_j G\tot1}{\tau\tot1_n} \widetilde{\otimes} \frac{\Delta^n_j G\tot2}{\tau\tot2_n}\rangle_{\Hi^{\otimes 2}}\\
= &\frac14 \langle \incratio1i \otimes\incratio2i+\incratio2i\otimes\incratio1i,\incratio1j\otimes\incratio2j+\incratio2j\otimes\incratio1j\rangle_{\Hi^{\otimes2}}\\
=&\frac 14  \langle \incratio 1i\otimes\incratio2i,\incratio1j\otimes\incratio2j\rangle_{\Hi^{\otimes2}}+ \frac 14  \langle \incratio 2i\otimes\incratio1i,\incratio1j\otimes\incratio2j\rangle_{\Hi^{\otimes2}}\\
+&\frac 14 \langle \incratio 1i\otimes\incratio2i,\incratio2j\otimes\incratio1j\rangle_{\Hi^{\otimes2}}+\frac 14  \langle \incratio 2i\otimes\incratio1i,\incratio2j\otimes\incratio1j\rangle_{\Hi^{\otimes2}}\\
=&\frac 14 \E\left[ \incratio 1i \incratio1j\right]\E\left[\incratio2i\incratio2j\right]+\frac14 \E\left[ \incratio 2i \incratio1j\right]\E\left[\incratio1i\incratio2j\right]\\
+& \frac 14 \E\left[ \incratio 1i \incratio2j\right]\E\left[\incratio2i\incratio1j\right] + \frac 14 \E\left[ \incratio 2i \incratio2j\right]\E\left[\incratio1i\incratio1j\right]\\
=&\frac 12r\tot n_{1,1}(k)r\tot n_{2,2}(k)+\frac 12 r\tot n_{2,1}(k)r\tot n_{1,2}(k).
\end{align*}
Thus, we have that expression \eqref{uywfqjf} becomes:
\begin{multline}
\label{sbaskbu}
\frac {1}{n}\sum_{k=1}^n k \left(r\tot n_{1,1}(k)r\tot n_{2,2}(k)+ r\tot n_{2,1}(k)r\tot n_{1,2}(k)\right)\\+ \frac {1}{n} \sum_{k=n+1}^{2n-1} \left(2n-k\right)\left(r\tot n_{1,1}(k)r\tot n_{2,2}(k)+ r\tot n_{2,1}(k)r\tot n_{1,2}(k)\right).
\end{multline}
By Cesaro's theorem, if:
\begin{equation}
\label{widbj}
\lim_{k\to\infty} k\left( r\tot n_{1,1}(k)r\tot n_{2,2}(k)+ r\tot n_{2,1}(k)r\tot n_{1,2}(k) \right)=0,
\end{equation}
then the first sum in \eqref{sbaskbu} will converge to zero.
Theorem \ref{ohfinallyyes} 
gives us:
\begin{equation}
\label{crucial}
 \left| r\tot n_{1,1}(k)r\tot n_{2,2}(k)+ r\tot n_{2,1}(k)r\tot n_{1,2}(k) \right| \leq 2(k-1)^{2(\delta\tot1+\delta\tot2) + 2\epsilon-2}.
\end{equation}
 Hence, we have the limit in \eqref{widbj} provided that $${2(\delta\tot1+\delta\tot2)+2\epsilon-2}<-1\iff(\delta\tot1+\delta\tot2)+\epsilon-1<-\frac12 \iff \epsilon < \frac 12 - (\delta\tot1+\delta\tot2),$$ which, in order for $\epsilon >0$ to hold, implies that we must ask:
 \begin{equation}
 \label{finalcondition}
\delta\tot1+\delta\tot2 < \frac 12.
\end{equation}

Applying Theorem \ref{ohfinallyyes} again shows that  the absolute value of the second sum in \eqref{sbaskbu} can be bounded by: {
\begin{align*}
&\quad \frac 1n \sum_{k=n+1}^{2n-1} (2n-k) 2(k-1)^{2\delta\tot 1+2\delta\tot 2+2\epsilon-2}\\
&=4\sum_{k=n}^{2n-2} k^{{2\delta\tot 1+2\delta\tot 2+2\epsilon-2}}-\frac 2n \sum_{k=n+1}^{2n-1} k (k-1)^{{2\delta\tot 1+2\delta\tot 2+2\epsilon-2}}\\
&=4\sum_{k=n}^{2n-2} k^{{2\delta\tot 1+2\delta\tot 2+2\epsilon-2}}-\frac 2n \sum_{k=n}^{2n-2}  k^{{2\delta\tot 1+2\delta\tot 2+2\epsilon-1}}
-\frac 2n \sum_{k=n}^{2n-2}  k^{{2\delta\tot 1+2\delta\tot 2+2\epsilon-2}}\\
&\leq 4\sum_{k=n}^{2n-2} k^{{2\delta\tot 1+2\delta\tot 2+2\epsilon-2}}+\frac 2n \sum_{k=n}^{2n-2}  k^{{2\delta\tot 1+2\delta \tot 2+2\epsilon -1}}+\frac 2n \sum_{k=n}^{2n-2}  k^{{2\delta\tot 1+2\delta\tot 2+2\epsilon-2}}
 \\
 &\leq 4\sum_{k=n}^{2n-2} k^{{2\delta\tot 1+2\delta\tot 2+2\epsilon-2}}+\frac 4n \sum_{k=n}^{2n-2}  k^{{2\delta\tot 1+2\delta \tot 2+2\epsilon -1}}.
\end{align*}
The first sum goes to zero whenever the summand is summable, thus we get 
{ $\delta^{(1)}+\delta^{(2)}<1$, which is clearly satisfied under condition \eqref{finalcondition}. }
For the second sum, we have in particular that $k<2n \iff \frac 1n < \frac 2k$, so we can write:
\[
\frac 4n \sum_{k=n}^{2n-2}  k^{{2\delta\tot i+2\delta \tot j+2\epsilon -1}}<8\sum_{k=n}^{2n-2} k^{{2\delta\tot i+2\delta\tot j+2\epsilon-2}}.
\]
Condition \eqref{finalcondition} again ensures convergence to zero.


\subsection{Limiting variance}
Now we consider the  case when $r=s$ in \eqref{fundamental}, as we have to find the limiting variance. Again, take, by simplicity, $r=s=1,a_1=0,b_1=1$, and this time, $k=\abs{i-j}$:
\begin{align}
\begin{split}
\label{jkasfgjk}
 \E\left[ I_2\left(f_{1,n}\right)I_2\left(f_{1,n}\right)\right]&=2\norm{f_{1,n}}^2_{\Hi^{\otimes2}}\\
&= 2\langle \frac{1}{\sqrt n} \si \frac{\DD G\tot 1}{\tau^{(1)}_n} \tilde\otimes \frac{\DD G\tot 2}{\tau^{(2)}_n}, \frac{1}{\sqrt n} \sum_{j=1}^n \frac{\JJ G\tot 1}{\tau^{(1)}_n} \tilde\otimes \frac{\JJ G\tot 2}{\tau^{(2)}_n}\rangle\\
&=\frac{1}{n}\sum_{i=1}^n  \sum_{j=1}^n \left(r\tot n_{1,1}(k)r\tot n_{2,2}(k)+ r\tot n_{2,1}(k)r\tot n_{1,2}(k)\right).
\end{split}
\end{align}
Now write $r\tot n_{1,1}(k)r\tot n_{2,2}(k)+ r\tot n_{2,1}(k)r\tot n_{1,2}(k)=p_n(\abs{i-j})$ (note that, if $j<i,  r\tot n_{a,b}(j-i)=r\tot n _{b,a}(i-j)$), so that:
\begin{align}\begin{split}
\frac{1}{n}\sum_{i=1}^n  \sum_{j=1}^n p_n(\abs{i-j})&=\frac 2n \si \sum_{j=1}^{i-1} p_n(i-j) + \frac {1}{n} \si p_n(0)= \frac 2n \si \sum_{k=1}^{i-1} p_n(k)+p_n(0)\\
&=\frac 2n \sum_{k=1}^{n-1} \sum_{i=k+1}^n p_n(k)+p_n(0)=\frac 2n \sum_{k=1}^{n-1} (n-k)p_n(k) +  p_n(0)\\
&=2\sum_{k=1}^{n-1}\left(1-\frac kn \right)p_n(k) + p_n(0). \label{iadugvs}
\end{split}\end{align}
Thanks to \eqref{thisconverges}, we see that, for $k\geq1$:
\[
p_n(k)=r\tot n_{1,1}(k)r\tot n_{2,2}(k)+  r\tot n_{2,1}(k)r\tot n_{1,2}(k)\to \rho\tot{1,1}_{2\beta\tot 1}(k) \rho\tot{2,2}_{2\beta \tot 2}(k)+ \left(\rho\tot{1,2}_{\beta\tot 1 + \beta \tot 2}(k)\right)^2,
\]
In the case when $k=0$, we have 
\[
\begin{split}
p_n(0)&=1+\frac{1}{(\tau\tot 1_n \tau\tot 2_n)^2}\left(\E\left[\Delta_1^n G\tot1 \Delta_1^n G\tot 2 \right]\right)^2 
1+\rho^2\left(\frac{\zeta_n}{\xi_n}\right)^2,
\end{split}
\]
where $\zeta_n$ and $\xi_n$ are defined as in \eqref{zetaxi}.
As above, using Assumption \ref{assss} and Lemma \ref{terrific}, 
 ensures that 
$\lim_{n\to\infty}\zeta_n/\xi_n=H^2$ and hence 
$\lim_{n\to\infty}p_n(0)=1+\rho^2 H^2$.

By the bound \eqref{bound} in Theorem \ref{ohfinallyyes} and the bounded convergence theorem, \eqref{iadugvs} converges to \begin{multline}
C(1,1):=\lim_{n\to \infty}\E\left[ I_2\left(f_{1,n}\right)I_2\left(f_{1,n}\right)\right]\\=2\sum_{k=1}^{\infty} \left(  \rho\tot{1,1}_{2\beta\tot 1}(k) \rho\tot{2,2}_{2\beta \tot 2}(k)+ \left(\rho\tot{1,2}_{\beta\tot 1 + \beta \tot 2}(k)\right)^2\right)+(1+\rho^2 { H^2}) < \infty.\end{multline}

\subsection{Proof of Theorem \ref{finitedisttheo}}

\begin{proof}[Proof of Theorem \ref{finitedisttheo}]

We start from the last statement of the theorem, i.e.~the limiting covariance matrix. The limit: $\lim_{n\to\infty}\E\left[F_{i,n}F_{j,n}\right]$ has been computed in the last few sections, where we picked intervals $[a_k,b_k]$ of length 1 and showed that the matrix is diagonal, with diagonal elements all equal to $C(1,1)$. It is straightforward to change the summation indices in \eqref{jkasfgjk} from $\sum_{i=1}^n$ to $\sum_{i=\floor{na_k}+1}^{\floor {nb_k}}$. Since $\lim_{n\to\infty} \frac {\floor{nb_k}-\floor{na_k}}{n}=\lim_{n\to\infty}\frac{nb_k-\{nb_k\}-na_k+\{na_k\}}{n}=b_k-a_k$, we get the limit as in the statement.

The weak convergence  is now implied by an application of Theorem \ref{4thmoment2}. 
In order to show that condition (b) there is satisfied, we need to check one of the equivalent  conditions provided by Theorem \ref{powerfulfriend}.
Employing condition 3 in our case accounts to verifying that, for $1\leq k\leq d$:
\[
\norm{\left(\frac{1}{\sqrt n}  \sum_{i=\floor {n a_k}+1}^{\floor {n b_k}}\frac{\DD G\tot1}{\tau\tot1_n} \widetilde{\otimes} \frac{\DD G\tot2}{\tau\tot2_n}\right)\otimes_1\left( \frac{1}{\sqrt n}  \sum_{i=\floor {n a_k}+1}^{\floor {n b_k}}\frac{\DD G\tot1}{\tau\tot1_n} \widetilde{\otimes} \frac{\DD G\tot2}{\tau\tot2_n}\right)}_{\Hi^{\otimes 2}}\to0.
\]
Without loss of generality, we look at $d=1$ and assume $a_1=0, b_1=1$:
\[
\frac1n\norm{\sum_{i,j=1}^n  \left(\frac{\DD G\tot1}{\tau\tot1_n} \widetilde{\otimes} \frac{\DD G\tot2}{\tau\tot2_n}\right)\otimes_1  \left(\frac{\Delta^n_j G\tot1}{\tau\tot1_n} \widetilde{\otimes} \frac{\Delta_j^n G\tot2}{\tau\tot2_n}\right)}_{\Hi^{\otimes 2}}.
\]
Let us examine the following: 
\begin{align*}
& \left(\frac{\DD G\tot1}{\tau\tot1_n} \widetilde{\otimes} \frac{\DD G\tot2}{\tau\tot2_n}\right)\otimes_1  \left(\frac{\Delta^n_j G\tot1}{\tau\tot1_n} \widetilde{\otimes} \frac{\Delta_j^n G\tot2}{\tau\tot2_n}\right)\\
&= \left(\frac 12   \frac{\DD G\tot1}{\tau\tot1_n} {\otimes} \frac{\DD G\tot2}{\tau\tot2_n}+\frac 12 \frac{\DD G\tot2}{\tau\tot2_n} {\otimes} \frac{\DD G\tot1}{\tau\tot1_n}\right) \otimes_ 1   \left(\frac 12   \frac{\JJ G\tot1}{\tau\tot1_n} {\otimes} \frac{\JJ G\tot2}{\tau\tot2_n}+\frac 12 \frac{\JJ G\tot2}{\tau\tot2_n} {\otimes} \frac{\JJ G\tot1}{\tau\tot1_n}\right)\\
&=\frac14 \E\left[ \frac{\DD G\tot1}{\tau\tot1_n} \frac{\JJ G\tot1}{\tau\tot1_n}\right]  \frac{\DD G\tot2}{\tau\tot2_n}\otimes  \frac{\JJ G\tot2}{\tau\tot2_n}+\frac14 \E\left[ \frac{\DD G\tot1}{\tau\tot1_n} \frac{\JJ G\tot2}{\tau\tot2_n}\right]  \frac{\DD G\tot2}{\tau\tot2_n}\otimes  \frac{\JJ G\tot1}{\tau\tot1_n}
\\&+\frac14 \E\left[ \frac{\DD G\tot2}{\tau\tot2_n} \frac{\JJ G\tot1}{\tau\tot1_n}\right]  \frac{\DD G\tot1}{\tau\tot1_n}\otimes  \frac{\JJ G\tot2}{\tau\tot2_n}+\frac14 \E\left[ \frac{\DD G\tot2}{\tau\tot2_n} \frac{\JJ G\tot2}{\tau\tot2_n}\right]  \frac{\DD G\tot1}{\tau\tot1_n}\otimes  \frac{\JJ G\tot1}{\tau\tot1_n} \\
&=\frac14\sum_{\substack{\{a,a'\}=\{1,2\}\\ \{b,b'\}=\{1,2\}}} r\tot n_{a,b} (j-i)  \frac{\DD G\tot{a'}}{\tau\tot{a'}_n} \otimes \frac{\JJ G\tot{b'}}{\tau\tot{b'}_n}.
\end{align*}

We hence obtain:
\begin{align}
&\frac{1}{n^2}\norm{\sum_{i,j=1}^n  \left(\frac{\DD G\tot1}{\tau\tot1_n} \widetilde{\otimes} \frac{\DD G\tot2}{\tau\tot2_n}\right)\otimes_1  \left(\frac{\Delta^n_j G\tot1}{\tau\tot1_n} \widetilde{\otimes} \frac{\Delta_j^n G\tot2}{\tau\tot2_n}\right)}_{\Hi^{\otimes 2}}^2 \nonumber\\
 &=\frac{1}{16n^2} \norm{\sum_{i,j=1}^n \left(\sum_{\substack{\{a,a'\}=\{1,2\}\\ \{b,b'\}=\{1,2\}}} r\tot n_{a,b} (j-i)  \frac{\DD G\tot{a'}}{\tau\tot{a'}_n} \otimes \frac{\JJ G\tot{b'}}{\tau\tot{b'}_n}\right) }^2_{\Hi^{\otimes 2}} \nonumber\\
&=\frac {1}{16n^2} \sum_{i,j,i',j'=1}^n \Biggl\langle \sum_{\substack{\{a,a'\}=\{1,2\}\\ \{b,b'\}=\{1,2\}}} r\tot n_{a,b} (j-i)  \frac{\DD G\tot{a'}}{\tau\tot{a'}_n} \otimes \frac{\JJ G\tot{b'}}{\tau\tot{b'}_n},\\&
~~~~~~~~~~~~~~~~~~~~~~ \sum_{\substack{\{\alpha,\alpha'\}=\{1,2\}\\ \{\beta,\beta'\}=\{1,2\}}} r\tot n_{\alpha,\beta} (j'-i')  \frac{\Delta^n_{i'} G\tot{\alpha'}}{\tau\tot{\alpha'}_n} \otimes \frac{\Delta^n_{j'} G\tot{\beta'}}{\tau\tot{\beta'}_n}\Biggr\rangle_{\Hi^{\otimes 2}} \nonumber\\
&= \frac{1}{16n^2}   \sum_{\substack{\{a,a'\}=\{1,2\}\\ \{b,b'\}=\{1,2\}\\\{\alpha,\alpha'\}=\{1,2\}\\ \{\beta,\beta'\}=\{1,2\}} }  \sum_{i,j,i',j'=1}^n r\tot n_{a,b} (j-i) r\tot n_{\alpha,\beta} (j'-i')\left\langle   \frac{\DD G\tot{a'}}{\tau\tot{a'}_n} \otimes \frac{\JJ G\tot{b'}}{\tau\tot{b'}_n}     ,    \frac{\Delta^n_{i'} G\tot{\alpha'}}{\tau\tot{\alpha'}_n} \otimes \frac{\Delta^n_{j'} G\tot{\beta'}}{\tau\tot{\beta'}_n}\right\rangle_{\Hi^{\otimes 2}} \nonumber\\
&= \frac{1}{16n^2}    \sum_{\substack{\{a,a'\}=\{1,2\}\\ \{b,b'\}=\{1,2\}\\\{\alpha,\alpha'\}=\{1,2\}\\ \{\beta,\beta'\}=\{1,2\}} }  \sum_{i,j,i',j'=1}^n r\tot n_{a,b} (j-i) r\tot n_{\alpha,\beta} (j'-i') r\tot n_{a',\alpha'}(i'-i) r\tot n_{b',\beta'}(j'-j) . \label{iyuyuyi}
\end{align}
We need to show that the quantity in \eqref{iyuyuyi} converges to zero. It is sufficient to show that the sum of the absolute values converges to zero.
If we apply H\"older inequality, we get:
\[
\begin{split}
& \frac{1}{16n^2}    \sum_{\substack{\{a,a'\}=\{1,2\}\\ \{b,b'\}=\{1,2\}\\\{\alpha,\alpha'\}=\{1,2\}\\ \{\beta,\beta'\}=\{1,2\}} }  \sum_{i,j,i',j'=1}^n\left| r\tot n_{a,b} (j-i) r\tot n_{\alpha,\beta} (j'-i') r\tot n_{a',\alpha'}(i'-i) r\tot n_{b',\beta'}(j'-j) \right|\\
&\leq \frac {1}{16n^2}   \sum_{\substack{\{a,a'\}=\{1,2\}\\ \{b,b'\}=\{1,2\}\\\{\alpha,\alpha'\}=\{1,2\}\\ \{\beta,\beta'\}=\{1,2\}} }  \sum_{i,j,i',j'=1}^n  \left|  r\tot n_{a,b} (j-i)  r\tot n_{a',\alpha'}(i'-i) \left[\left(r\tot n_{\alpha,\beta} (j'-i')\right)^2+\left(r\tot n_{b',\beta'} (j'-i')\right)^2\right] \right|. \\
\end{split}
\]
So we can split the sum into two components. Let us perform the substitution $$(i,j,i',j')\to(i,j,i',l):=(i,j,i',i'-j').$$ We have:
\begin{align*}
&\frac {1}{16n^2}   \sum_{\substack{\{a,a'\}=\{1,2\}\\ \{b,b'\}=\{1,2\}\\\{\alpha,\alpha'\}=\{1,2\}\\ \{\beta,\beta'\}=\{1,2\}} }  \sum_{i,j,i',j'=1}^n   \left| r\tot n_{a,b} (j-i)  r\tot n_{a',\alpha'}(i'-i) \right|\left(r\tot n_{\alpha,\beta} (j'-i')\right)^2\\
=&\frac {1}{16n^2}   \sum_{\substack{\{a,a'\}=\{1,2\}\\ \{b,b'\}=\{1,2\}\\\{\alpha,\alpha'\}=\{1,2\}\\ \{\beta,\beta'\}=\{1,2\}} }  \sum_{i,j,i'=1}^n  \sum_{\abs l<n} \left| r\tot n_{a,b} (j-i)  r\tot n_{a',\alpha'}(i'-i) \right|\left(r\tot n_{\alpha,\beta} (l)\right)^2\\
&\leq  \frac {1}{16n^2}    \sum_{\substack{\{a,a'\}=\{1,2\}\\ \{b,b'\}=\{1,2\}\\\{\alpha,\alpha'\}=\{1,2\}\\ \{\beta,\beta'\}=\{1,2\}} }  \sum_{l\in \mathbb Z} \left(r\tot n_{\alpha,\beta}(l)\right)^2 \sum_{i,j,i'=1}^n \left| r\tot n_{a,b} (j-i)  r\tot n_{a',\alpha'}(i'-i)  \right|\\
&=  \frac {1}{16n^2}    \sum_{\substack{\{a,a'\}=\{1,2\}\\ \{b,b'\}=\{1,2\}\\\{\alpha,\alpha'\}=\{1,2\}\\ \{\beta,\beta'\}=\{1,2\}} }  \sum_{l\in \mathbb Z} \left(r\tot n_{\alpha,\beta}(l)\right)^2 n 
\sum_{\abs{i}<n}\left| r\tot n _{a,b} (i)\right| \sum_{\abs{j}<n}\left| r\tot n _{a',\alpha'} (j) \right|\\
&= \frac {1}{16n}    \sum_{\substack{\{a,a'\}=\{1,2\}\\ \{b,b'\}=\{1,2\}\\\{\alpha,\alpha'\}=\{1,2\}\\ \{\beta,\beta'\}=\{1,2\}} }  \sum_{l\in \mathbb Z} \left(r\tot n_{\alpha,\beta}(l)\right)^2 
\sum_{\abs{i}<n}\left| r\tot n _{a,b} (i)\right| \sum_{\abs{j}<n}\left| r\tot n _{a',\alpha'} (j) \right|\\
& =\frac{1}{16}   \sum_{\substack{\{a,a'\}=\{1,2\}\\ \{b,b'\}=\{1,2\}\\\{\alpha,\alpha'\}=\{1,2\}\\ \{\beta,\beta'\}=\{1,2\}} }  \sum_{k\in \mathbb Z} \left(r\tot n_{\alpha,\beta}(k)\right)^2 \left[  \frac{1}{\sqrt n}  \sum_{\abs{i}<n} \left|r\tot n _{a,b} (i)\right|  \right] \left[  \frac{1}{\sqrt n}   \sum_{\abs{j}<n} \left|r\tot n _{a',\alpha'} (j)\right|\right].
\end{align*}
Now, fix $\delta>0:$
\[
\frac{1}{\sqrt n} \sum_{\abs i <n }\left|r \tot n_{a,b}(i)\right|= \frac {1}{\sqrt n} \sum_{\abs i \leq \floor{n\delta}}\left| r \tot n_{a,b}(i)\right|+\frac {1}{\sqrt n} \sum_{\floor{n\delta} < \abs{i} < n}\left|r \tot n_{a,b}(i)\right|.
\]
Thanks to  H\"older's inequality, the first term is bounded by: 
\[
\frac {1}{\sqrt n} \sqrt{2 \floor {n\delta} +1} \sqrt{ \sum_{i \in \mathbb Z} \left| r\tot n_{a,b}(i)\right|^2},
\]
and the second one by:
\[
\frac{1}{\sqrt n} \sqrt{ 2(n-\floor {n\delta}-1)} \sum_{\floor{n\delta }<\abs i < n} \left|r \tot n_{a,b}(i)\right|^2.
\]
For a fixed $\delta$,  the second one converges to $0$ as $n$ tends to infinity. The first one is bounded by $K \sqrt{\delta}$ (for a positive constant $K<\infty$), thus letting $\delta\to 0$ we have:
\[
\lim_{n\to \infty} \frac{1}{\sqrt n} \sum_{\abs i <n }\left|r \tot n_{a,b}(i)\right|=\lim_{\delta\to 0}\lim_{n\to \infty} \frac{1}{\sqrt n} \sum_{\abs i <n }\left|r \tot n_{a,b}(i)\right|=0,
\]
provided that the following series
\[
\sum_{i\in \mathbb Z} \left(r\tot n_{\alpha,\beta}(i)\right)^2, \qquad \sum_{i\in \mathbb Z} \left(r\tot n_{a,b}(i)\right)^2, \qquad\sum_{i\in \mathbb Z} \left(r\tot n_{a',b'}(i)\right)^2,
\]
converge.
In a completely analogous way we can show that the second component of the original sum converges to zero, provided that also $ \sum_{i\in \mathbb Z} \left(r\tot n_{b',\beta'}(i)\right)^2$ is finite.
But since from Theorem \ref{ohfinallyyes} we have:
\[
\left| r^{(n)}_{i,j}(k)\right|^2 \leq C(k-1)^{2\delta\tot i+ 2\delta \tot j +2\epsilon -2}, \quad \text{for $k \geq 2$},
\]
it is sufficient to ask that $\delta\tot i +\delta \tot j < \frac 12$ for all possible choices of $i,j$. Explicitly, it is sufficient to ask that:
$\delta \tot 1 <\frac 14 $ and $\delta \tot2 < \frac 14$.

The statement of the theorem is proved.

\end{proof}

\subsection{Proof of Theorem \ref{tightnesstheo}}
\begin{proof}[Proof of Theorem \ref{tightnesstheo}]
\[
\begin{split}
&\E\left[(Z^n_t-Z^n_s)^2\right]=\E\left[(Z^n_{t-s})^2\right]= \E\left[\frac 1n \left(I_2\left(\sum_{i=1}^{\floor{nt}-\floor{ns}}\frac{\Delta^n_i G \tot 1}{\tau\tot 1_n} \otimes \frac{\Delta^n_i G \tot 2}{\tau\tot 2_n} \right)\right)^2\right]\\
&=\frac 1n \langle \sum_{i=1}^{\floor{nt}-\floor{ns}} \frac{\Delta^n_i G \tot 1}{\tau\tot 1_n} \otimes \frac{\Delta^n_i G \tot 2}{\tau\tot 2_n}  ,\sum_{j=1}^{\floor{nt}-\floor{ns}}\frac{\Delta^n_j G \tot 1}{\tau\tot 1_n} \otimes\frac{\Delta^n_j G \tot 2}{\tau\tot 2_n} \rangle_{\Hi^{\otimes 2}}\\
&=\frac {1}{2n}\sum_{i=1}^{\floor{nt}-\floor{ns} }\sum_{j=1}^{\floor{nt}-\floor{ns} }\left( r\tot n_{1,1}(|i-j|)r\tot n_{2,2}(|i-j|)+\frac 12 r\tot n_{2,1}(|i-j|)r\tot n_{1,2}(|i-j|) \right).
\end{split}
\]
Multiplying and dividing by $\floor{nt}-\floor{ns}$ yields:
\[
\frac{\floor{nt}-\floor{ns}}{n} \left(   \frac{1}{\floor{nt}-\floor{ns}}\sum_{k=1}^{\floor{nt}-\floor{ns}-1} \left(1-\frac kn\right) p_n(k) +p_n(0) \right),
\]
thanks to the same arguments as in equation \eqref{iadugvs}. We now know that the quantity in brackets is convergent, hence bounded. Tightness now follows as in the proof of Theorem 7 in \cite{Corcuera2012New}, invoking Theorem 13.5 in \cite{billingsley2009convergence}.
\end{proof}

\subsection{Proof of Theorem \ref{weakconGC}}
\begin{proof}[Proof of Theorem \ref{weakconGC}]
The fact that the finite dimensional distributions of the realised covariation converge to those of Brownian motion is the content of Theorem \ref{finitedisttheo}: the limiting finite dimensional distributions we had there coincide with those on the right hand side of \eqref{mbkfjdfl}.
The fact that the limiting Brownian motion $B_t$ is independent of $G\tot1$ and $G\tot2$ follows from the fact that 
\begin{align*}
\begin{pmatrix}G\tot1_{b_k}-G\tot1_{a_k}, G\tot2_{b_k}-G\tot2_{a_k},\frac{1}{\sqrt n}\sum_{i=\floor{na_k}+1}^{\floor {nb_k}} \left(\incf ni1G \incf ni2G-\E\left[\incf ni1G \incf ni2G\right]\right) \end{pmatrix}_{n\in \N}
\end{align*}
 converges to a multivariate Gaussian, and, for all $n\in\N$ the third component is orthogonal to the first two, as it belongs to a different Wiener chaos.
Given the  tightness result in Theorem \ref{tightnesstheo}, an application of Theorem 13.1 in \cite{billingsley2009convergence} allows to conclude.
\end{proof}

\section{Proofs for the Brownian semistationary process}\label{S8}
\subsection{Strategy and outline of the proof}

In order to prove the central limit theorem for the bivariate Brownian semistationary process we will introduce a \emph{blocking technique}, see \cite{Bernstein1927}, whereby, alongside the original time-grid indexed by $n$, we introduce a coarser grid with a new index $l$, and we freeze the volatility processes at the start of each $l-$interval.
Heuristically, letting $n$ go to infinity, for a fixed $l$, allows us invoke the weak convergence of the Gaussian core we have  proven in the previous section, as the volatilities are ``frozen''. A further limit in $l$ gives us the final result where the volatilities are integrated against the limiting Brownian motion.

Let us now show how the blocking technique will be introduced. 
We define $$\mu_n:=r_{1,2}^{(n)}(0)=\E\left[ \incf n11G \incf n12G\right],$$ which is bounded by 1. For any $ l\leq n$ we have the decomposition:
\begin{align*}
&\frac{1}{\sqrt n} \sum_{i=1}^{\floor {nt}} \incf ni1Y \incf ni2Y-\sqrt n\, \mu_n\int_0^t \sigma\tot1_s\sigma\tot2_s\,ds\\
=&\underbrace{\frac{1}{\sqrt n} \sum_{i=1}^{\floor {nt}}\left(\incf ni1Y \incf ni2Y - \sigma\tot1_{(i-1)\Delta_n}\sigma\tot2_{(i-1)\Delta_n} \incf ni1G \incf ni2G\right)}_{A^n_t}\\
+& \underbrace{\frac{1}{\sqrt n} \sum_{i=1}^{\floor {nt}} \sigma \tot1_{(i-1)\Delta_n}\sigma\tot2_{(i-1)\Delta_n} \incf ni1G\incf ni2G-\frac{1}{\sqrt n}\sum_{j=1}^{\floor{lt}} \sigma\tot1_{(j-1)\Delta l}\sigma \tot 2_{(j-1)\Delta_l} \sum_{i\in I_l(j)} \incf ni1G \incf ni2G}_{A^{'n,l}_t}\\
+&\underbrace{\frac{\sqrt n}{l} \mu_n \sum_{j=1}^{\floor{lt}} \sigma \tot1_{(j-1)\Delta_l}\sigma\tot2_{(j-1)\Delta_l}- \frac {1}{\sqrt n} \mu_n  \sum_{j=1}^{\floor{nt}} \sigma \tot1_{(j-1)\Delta_n}\sigma\tot2_{(j-1)\Delta_n}}_{A^{''n,l}_t}  \\
+& \underbrace{\frac{1}{\sqrt n} \sum_{j=1}^{\floor {lt}} \sigma\tot1_{(j-1)\Delta_l}\sigma\tot2_{(j-1)\Delta_l} \sum_{i\in I_l(j)} \incf ni1G \incf ni2G - \frac{\sqrt n}{l} \mu_n \sum_{j=1}^{\floor{lt}} \sigma\tot1_{(j-1)\Delta_l}\sigma\tot2_{(j-1)\Delta_l}}_{C^{n,l}_t}\\
+&\underbrace{\frac {1}{\sqrt n} \mu_n  \sum_{j=1}^{\floor{nt}} \sigma \tot1_{(j-1)\Delta_n}\sigma\tot2_{(j-1)\Delta_n} -\sqrt n \mu_n \int_0^t \sigma\tot1_s\sigma\tot2_s\,ds}_{D^n_t}.
\end{align*}
The term denoted by  $C^{n,l}_t$ will give us the stable convergence to a non-zero limit, while the terms $A^n_t, A^{'''n,l}_t:=A^{'n,l}_t+A^{''n,l}_t, D^n_t$ will converge to zero (in a way that will be made precise below.)

We will divide the proof into four parts, each one dealing separately with one of the terms.
\subsection{Convergence of the term $A^n_t$}
\begin{proposition}\label{A} Assume that the assumptions of Theorem \ref{CLT} hold. Then 
 $A^n_t$ given by 
\[
A^n_t=\frac{1}{\sqrt n} \sum_{i=1}^{\floor {nt}}\left(\incf ni1Y \incf ni2Y - \sigma\tot1_{(i-1)\Delta_n}\sigma\tot2_{(i-1)\Delta_n} \incf ni1G \incf ni2G\right)
\]
converges to zero  uniformly on compacts in probability (u.c.p.).
\end{proposition}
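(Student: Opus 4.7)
The plan is to freeze the volatility inside the stochastic integrals defining $\Delta_i^n Y^{(j)}$, show that the remainder is of lower order than $\tau_n^{(j)}$, and then conclude via a bound on $\E\bigl[\sup_{t\le T}|A^n_t|\bigr]$ (or via the standard sufficient condition $\sum_i \E[|\cdot|] \to 0$). First, using the representation of the increments, I would write, for $j\in\{1,2\}$,
\begin{align*}
\Delta^n_i Y^{(j)} = \sigma^{(j)}_{(i-1)\Delta_n}\,\Delta^n_i G^{(j)} + U^{(j)}_{i,n},
\end{align*}
where the remainder decomposes as $U^{(j)}_{i,n}=U^{(j),\text{past}}_{i,n}+U^{(j),\text{cur}}_{i,n}$, with
\begin{align*}
U^{(j),\text{past}}_{i,n} &= \int_{-\infty}^{(i-1)\Delta_n} \!\bigl(g^{(j)}(i\Delta_n-s)-g^{(j)}((i-1)\Delta_n-s)\bigr)\bigl(\sigma^{(j)}_s-\sigma^{(j)}_{(i-1)\Delta_n}\bigr)dW^{(j)}_s,\\
U^{(j),\text{cur}}_{i,n} &= \int_{(i-1)\Delta_n}^{i\Delta_n}\! g^{(j)}(i\Delta_n-s)\bigl(\sigma^{(j)}_s-\sigma^{(j)}_{(i-1)\Delta_n}\bigr)dW^{(j)}_s.
\end{align*}
Expanding the product $\Delta^n_i Y^{(1)} \Delta^n_i Y^{(2)}$ produces three cross terms involving at least one $U^{(j)}_{i,n}$, and by Cauchy--Schwarz the problem reduces to controlling $\E[(U^{(j)}_{i,n}/\tau_n^{(j)})^2]$ uniformly in $i$ at a rate better than $n^{-1}$ (so that, after summing $\floor{nt}$ such terms and dividing by $\sqrt n$, the bound vanishes).

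For $U^{(j),\text{cur}}_{i,n}$, Itô's isometry and the \Holder continuity of $\sigma^{(j)}$ give
\begin{align*}
\E\bigl[(U^{(j),\text{cur}}_{i,n})^2\bigr] \le C\,\Delta_n^{2\alpha^{(j)}}\int_0^{\Delta_n}(g^{(j)}(u))^2\,du\le C\,\Delta_n^{2\alpha^{(j)}}(\tau_n^{(j)})^2,
\end{align*}
so that, after normalisation, the contribution is $O(\Delta_n^{2\alpha^{(j)}})$, and the resulting scaled sum is $O(n^{1/2-\alpha^{(j)}})\to 0$ by $\alpha^{(j)}>1/2$. For $U^{(j),\text{past}}_{i,n}$, I would split the domain of integration at $(i-1)\Delta_n-\epsilon_n$ for an appropriately chosen $\epsilon_n=O(n^{-\kappa})$. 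On the near-past piece $[(i-1)\Delta_n-\epsilon_n,(i-1)\Delta_n)$, the \Holder bound $|\sigma^{(j)}_s-\sigma^{(j)}_{(i-1)\Delta_n}|\le L\,\epsilon_n^{\alpha^{(j)}}$ combined with Itô's isometry yields
\begin{align*}
\E\Bigl[(U^{(j),\text{past},\text{near}}_{i,n})^2\Bigr] \le C\,\epsilon_n^{2\alpha^{(j)}}(\tau_n^{(j)})^2,
\end{align*}
and $\kappa$ can be tuned so that $\sqrt n\,\epsilon_n^{\alpha^{(j)}}\to 0$. On the far-past piece, I cannot exploit the smallness of $\sigma^{(j)}_s-\sigma^{(j)}_{(i-1)\Delta_n}$, so I would instead bound it by $2\sup_{s\le T}|\sigma^{(j)}_s|$ and use Assumption \ref{hsgsgsfs} in its probabilistic-measure form: dividing by $(\tau_n^{(j)})^2$ converts the integral of $(g^{(j)}(x+\Delta_n)-g^{(j)}(x))^2$ over $(\epsilon_n,\infty)$ into the mass $\pi^{(j)}_n((\epsilon_n,\infty))=O(n^{\lambda(1-\kappa)})$, which decays polynomially with $\lambda<-1$; choosing $\kappa$ appropriately makes $\sqrt n$ times this quantity vanish.

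Combining these three bounds, one obtains $\sum_{i=1}^{\floor{nT}} \E\bigl[|\incf n i 1 Y\, \incf n i 2 Y - \sigma^{(1)}_{(i-1)\Delta_n}\sigma^{(2)}_{(i-1)\Delta_n} \incf n i 1 G\, \incf n i 2 G|\bigr]=o(\sqrt n)$, whence $A^n\to 0$ u.c.p. (in fact in $L^1$ uniformly on $[0,T]$). The main obstacle is the far-past contribution to $U^{(j),\text{past}}_{i,n}$: there $\sigma^{(j)}$ is not close to the frozen value, so one must rely on the kernel's probabilistic-mass decay in Assumption \ref{hsgsgsfs} and balance the two rates simultaneously through the single parameter $\kappa$, making sure it is compatible with both $\alpha^{(j)}>1/2$ and $\lambda<-1$ so that all three pieces are $o(n^{-1/2})$.
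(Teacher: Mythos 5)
Your proposal is correct and follows essentially the same route as the paper: freeze the volatility at $(i-1)\Delta_n$, reduce via Cauchy--Schwarz to second-moment bounds on the normalised remainders, and control these by \Holder continuity of $\sigma\tot j$ on the current/near-past part and by the kernel-mass decay of Assumption \ref{hsgsgsfs} on the far past. The only difference is presentational: the paper performs the add-and-subtract on each of the four product terms separately and imports the key estimates (the terms $A$, $B$, $C$) from the proof of Theorem 5 in \cite{barndorff2011multipower}, whereas you carry out the $\epsilon_n$-splitting argument explicitly.
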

\begin{proof}[Proof of Proposition \ref{A}]
We write:
\begin{multline}
\incf ni1Y\incf ni2Y\\=\frac{1}{\tau\tot1_n\tau\tot2_n}\left(\DDint g\tot1(i\Delta_n-s)\sigma\tot1_s\,dW\tot1_s+\int_{-\infty}^{(i-1)\Delta_n} \Delta g\tot1\sigma\tot1_s\,dW\tot1_s \right)\\\times\left(\DDint g\tot2(i\Delta_n-s)\sigma\tot2_s\,dW\tot2_s+\int_{-\infty}^{(i-1)\Delta_n} \Delta g\tot2\sigma\tot2_s\,dW\tot2_s \right).
\end{multline}
And the corresponding 4 terms for $\incf ni1G\incf ni2G$.
We start by showing that:
\begin{multline}
\frac{1}{\sqrt n  \tau\tot1_n\tau\tot2_n} \sum_{i=1}^{\floor {nt}}\Biggl[\DDint g\tot1(i\Delta_n-s)\sigma\tot1_s\,dW\tot1_s\DDint g\tot2(i\Delta_n-s)\sigma\tot2_s\,dW\tot2_s-\\\sigma\tot1_{(i-1)\Delta_n}\sigma\tot2_{(i-1)\Delta_n}\DDint g\tot1(i\Delta_n-s)\,dW\tot1_s\DDint g\tot2(i\Delta_n-s)\,dW\tot2_s\Biggr]
\end{multline}
goes to zero.
Adding and subtracting $\sigma\tot1_{(i-1)\Delta_n} \DDint g\tot1(i\Delta_n-s)\,dW\tot1_s\DDint g\tot2(i\Delta_n-s)\sigma\tot2_s\,dW\tot2_s$, we get:
\begin{align}\begin{split}
\label{nkas}
&\frac{1}{\sqrt n \tau\tot1_n\tau\tot2_n} \sum_{i=1}^{\floor {nt}}\DDint g\tot2(i\Delta_n-s)\sigma\tot2_s\,dW\tot2_s\\
&\qquad \qquad\qquad\qquad \times \left[\DDint g\tot1(i\Delta_n-s)\left(\sigma\tot1_s-\sigma\tot1_{(i-1)\Delta_n}\right)\,dW\tot1_s\right]\\
&+
\frac{1}{\sqrt n \tau\tot1_n\tau\tot2_n} \sum_{i=1}^{\floor {nt}} \sigma\tot1_{(i-1)\Delta_n}\DDint g\tot1(i\Delta_n-s)\,dW\tot1_s\\
& \qquad\qquad\qquad\qquad \times \left[\DDint g\tot2(i\Delta_n-s)\left(\sigma\tot2_s-\sigma\tot2_{(i-1)\Delta_n}\right)\,dW\tot2_s\right].
\end{split}\end{align}
We can show  $u.c.p$  convergence to zero. If we can show that the supremum over $[0,T]$ converges in $L^1$  to zero, it would be enough.
If we take the first term of \eqref{nkas},
\begin{align*}
&\E\left|\frac{1}{\sqrt n \tau\tot1_n\tau\tot2_n} \sum_{i=1}^{\floor {nt}}\DDint g\tot2(i\Delta_n-s)\sigma\tot2_s\,dW\tot2_s \right.
\\ &\qquad\qquad\qquad\qquad\left. \times \left[\DDint g\tot1(i\Delta_n-s)\left(\sigma\tot1_s-\sigma\tot1_{(i-1)\Delta_n}\right)\,dW\tot1_s\right]\right|
\end{align*}
that is smaller than:
\begin{align*}
&
\frac{1}{\sqrt n \tau\tot1_n\tau\tot2_n} \sum_{i=1}^{\floor {nt}}\E\left|\DDint g\tot2(i\Delta_n-s)\sigma\tot2_s\,dW\tot2_s
\right.
\\ &\qquad\qquad\qquad\qquad\left. \times
\left[\DDint g\tot1(i\Delta_n-s)\left(\sigma\tot1_s-\sigma\tot1_{(i-1)\Delta_n}\right)\,dW\tot1_s\right]\right|.
\end{align*}
By Cauchy-Schwarz $\E[|XY|]\leq \sqrt {\E[X^2]}\sqrt {\E[Y^2]}$.  Now:
\[
\sqrt{\DDint \left(g\tot2(i\Delta_n-s)\right)^2\E[\sigma\tot2_s]\,ds}=\sqrt{\int_0^{\Delta_n}\left(g\tot2(s)\right)^2\E\left[\left(\sigma\tot2_{i\Delta_n-s}\right)^2\right]\,ds},
\]
since $\sigma$ is bounded on compact intervals, we get the bound
\[
K\frac{\sqrt{\int_0^{\Delta_n}\left(g\tot2(s)\right)^2\,ds}}{\tau\tot2_n} \times \frac{1}{\sqrt n \tau\tot1_n} \sum_{i=1}^{\floor {nt}}\sqrt{\E\left[\left(\DDint g\tot1(i\Delta_n-s)(\sigma\tot1_s-\sigma\tot1_{(i-1)\Delta_n-s})\,dW\tot1_s\right)^2\right]},
\]
for some constant $K$.
Now the first term is bounded by $K$, and the second one is term $A$ in the proof of Theorem 5 in \cite{barndorff2011multipower} (page 37, full version), which goes to zero, under our assumptions.
We can repeat the reasoning for the second term of \eqref{nkas}. 
Let's take another term now:
\begin{multline}
\frac{1}{\sqrt n\tau\tot1_n\tau\tot2_n} \sum_{i=1}^{\floor {nt}} \Biggl[\int_{-\infty}^{(i-1)\Delta_n}  \Delta g\tot1 \sigma\tot1_s\,dW\tot1_s \int_{-\infty}^{(i-1)\Delta_n} \Delta g\tot2 \sigma\tot2_s\,dW\tot2_s\\- \sigma\tot1_{(i-1)\Delta_n}\sigma\tot2_{(i-1)\Delta_n} \int_{-\infty}^{(i-1)\Delta_n} \Delta g\tot1 \,dW\tot1_s \int_{-\infty}^{(i-1)\Delta_n} \Delta g\tot2 \,dW\tot2_s\Biggr].
\end{multline}
Adding and subtracting $\sigma\tot1_{(i-1)\Delta_n}\int_{-\infty}^{(i-1)\Delta_n}\Delta g\tot1\,dW\tot1_s \int_{-\infty}^{(i-1)\Delta_n} \Delta g\tot2 \sigma\tot2_s\,dW\tot2_s$, we get as the first term:
\[
\frac{1}{\sqrt n} \sum_{i=1}^{\floor {nt}}  \underbrace{\frac{\int_{-\infty}^{(i-1)\Delta_n} \Delta g\tot2 \sigma\tot2_s\,dW\tot2_s}{\tau\tot2_n}}_{(1)} \underbrace{\frac{ \left[ \int_{-\infty}^{(i-1)\Delta_n}  \Delta g\tot1 \sigma\tot1_s\,dW\tot1_s-\sigma\tot1_{(i-1)\Delta_n}\int_{-\infty}^{(i-1)\Delta_n}\Delta g\tot1\,dW\tot1_s\right]}{\tau\tot1_n}}_{(2)}
\]
We can use the same arguments as above. The only difference is the expectation of (1) over the infinite interval:
\[
\frac{\sqrt{\E\left[\left(\int_{-\infty}^{(i-1)\Delta_n} \Delta g\tot2 \sigma\tot2_s\,dW\tot2_s\right)^2\right]}}{\tau\tot2_n}=\frac{\sqrt{\int_0^\infty \left(g\tot2(s+\Delta_n)-g\tot2(s)\right)^2\E\left[\left(\sigma_{(i-1)\Delta_n-s}\tot2\right)^2\right]\,ds}}{\tau\tot2_n}
\]
Assumption \ref{uetsygvs} allows to conclude that this quantity is bounded. The remaining term (2) is the sum  $B+C$  from the paper  \cite{barndorff2011multipower} and goes to zero in $L^2$ as well.

Now we consider the cross term
\begin{multline}
\frac{1}{\sqrt n} \sum_{i=1}^{\floor{nt}} \Biggl(\DDint g\tot1(i\Delta_n-s)\sigma\tot1_s\,dW\tot1_s \int_{-\infty}^{(i-1)\Delta_n} \Delta g\tot2 \sigma\tot2_s\,dW\tot2_s  \\ -\sigma\tot1_{(i-1)\Delta_n}\sigma\tot2_{(i-1)\Delta_n}   \DDint g\tot1(i\Delta_n-s)\,dW\tot1_s \int_{-\infty}^{(i-1)\Delta_n} \Delta g\tot2 \,dW\tot2_s  \Biggr).
\end{multline}
We add and subtract: $\sigma\tot1_{(i-1)\Delta_n}\DDint g\tot1(i\Delta_n-s)\,dW\tot1_s\int_{-\infty}^t\Delta g\tot2 \sigma\tot2_s\,dW\tot2_s$.
\begin{multline}
\frac{1}{\sqrt n} \sum_{i=1}^{\floor{nt}} \int_{-\infty}^{(i-1)\Delta_n} \Delta g\tot2\sigma\tot2_s\,dW\tot2_s \left(\DDint g\tot1(i\Delta_n-s)(\sigma\tot1_s-\sigma\tot1_{(i-1)\Delta_n})\,dW\tot1_s\right)\\
+\frac{1}{\sqrt n} \sum_{i=1}^{\floor{nt}} \sigma\tot1_{(i-1)\Delta_n}\DDint g\tot1(i\Delta_n-s)\,dW\tot1_s\left( \int_{-\infty}^{(i-1)\Delta_n}  \Delta g\tot2 \left(\sigma\tot2_s - \sigma\tot2_{(i-1)\Delta_n}\right)\,dW\tot2_s\right).
\end{multline}
We can proceed exactly as above, and convergence to zero is proved.
\end{proof}

\subsection{Convergence of the term $A^{'''n,l}_t=A^{'n,l}_t+A^{''n,l}_t$}
It is worth mentioning at this point that proofs that terms similar to the one we called $A^{'''n,l}_t$ converge to zero in the univariate case have had a tormented history in the literature. Indeed, a mistake appeared in the proof of a similar result in \cite{corcuera2006power} in the context of power variation for integral processes. The application of the mean value theorem on page   724  of that paper is invalid. 

The mistake was not simple to correct. Years later, the  paper \cite{corcuera2014asymptotics} was published, which highlighted the techniques from fractional integration that were needed to correct the proof.
As it turns out, in our multivariate setting it is sufficient to invoke that univariate result to obtain the required convergence. This section contains the details of the proof.

\begin{proposition}\label{A'''}
Assume that the assumptions of Theorem \ref{CLT} hold. Then 
\[
\Prob- \lim_{l\to \infty} \limsup_{n\to\infty} \sup_{t\in[0,T]} \left|A^{'''n,l}_t \right| = 0.
\]
\end{proposition}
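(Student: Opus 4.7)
The plan is to combine the two pieces into a single expression that features a centered second-chaos factor, and then bound its conditional $L^2$ norm using the covariance estimates of Theorem \ref{ohfinallyyes} and the H\"older continuity of the volatilities.

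Let $I_l(j):=\{i:(j-1)\Delta_l<(i-1)\Delta_n\leq j\Delta_l\}$, so that $|I_l(j)|$ is approximately $n/l$. Using the elementary identity $\sqrt n/l=|I_l(j)|/\sqrt n$ (exact if $l\mid n$, with an asymptotically negligible edge correction otherwise) to regroup the sums in $A^{'n,l}$ and $A^{''n,l}$ by coarse blocks, one finds after cancellation
\begin{align*}
A^{'''n,l}_t=\frac{1}{\sqrt n}\sum_{j=1}^{\floor{lt}}\sum_{i\in I_l(j)}D_{i,j}^{l,n}\Bigl(\incf ni1G\,\incf ni2G-\mu_n\Bigr),
\end{align*}
with $D_{i,j}^{l,n}:=\sigma\tot1_{(i-1)\Delta_n}\sigma\tot2_{(i-1)\Delta_n}-\sigma\tot1_{(j-1)\Delta_l}\sigma\tot2_{(j-1)\Delta_l}$. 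The crucial feature of this rewriting is that the Gaussian bracket is now centered and equals the second-chaos random variable $I_2\!\bigl(\tfrac{\DD G\tot1}{\tau_n\tot1}\widetilde\otimes\tfrac{\DD G\tot2}{\tau_n\tot2}\bigr)$; this is the cancellation that neither $A^{'n,l}$ nor $A^{''n,l}$ provides on its own.

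The second step is a conditional $L^2$ estimate. Conditioning on $\mathscr G$ to freeze the volatilities and using the product formula for multiple integrals, exactly as in the derivation of $C(1,1)$ in Section \ref{S4}, the conditional second moment becomes
\begin{align*}
\E\bigl[(A^{'''n,l}_t)^2\mid\mathscr G\bigr]=\frac{1}{n}\sum_{j,j'}\sum_{\substack{i\in I_l(j)\\i'\in I_l(j')}}D_{i,j}^{l,n}\,D_{i',j'}^{l,n}\bigl(r^{(n)}_{1,1}(i{-}i')r^{(n)}_{2,2}(i{-}i')+r^{(n)}_{1,2}(i{-}i')r^{(n)}_{2,1}(i{-}i')\bigr).
\end{align*}
Splitting $D_{i,j}^{l,n}$ via the identity $ab-cd=(a-c)b+c(b-d)$ and invoking the $\alpha\tot k$-H\"older continuity of $\sigma\tot k$ from Assumption \ref{hsgsgsfs}, one obtains $|D_{i,j}^{l,n}|\leq K\Delta_l^{\alpha}$ with $\alpha:=\min(\alpha\tot1,\alpha\tot2)>\tfrac12$ and $K$ a $\mathscr G$-measurable, locally bounded constant. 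The uniform bound of Theorem \ref{ohfinallyyes} together with the summability $\sum_k r_{a,b}(k)^2<\infty$ (already used in the proof of Theorem \ref{finitedisttheo}) then allow one to collapse the sum in $i-i'$ and obtain
\begin{align*}
\E\bigl[(A^{'''n,l}_t)^2\mid\mathscr G\bigr]\leq K^2\,\tilde C\,t\,\Delta_l^{2\alpha},
\end{align*}
uniformly in $t\in[0,T]$ and in $n$ large enough. Letting $n\to\infty$ preserves the bound, and then $l\to\infty$ makes it vanish, yielding pointwise convergence in probability (a routine localization in $K$ handles the fact that the H\"older constant is random but finite almost surely).

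The main obstacle is to upgrade this pointwise convergence to uniform convergence on $[0,T]$. The same estimate applied to increments gives the H\"older-in-$t$ bound $\E[(A^{'''n,l}_t-A^{'''n,l}_s)^2\mid\mathscr G]\leq K^2\tilde C|t-s|\Delta_l^{2\alpha}$, and combined with pointwise convergence a Kolmogorov--Chentsov-style tightness argument, analogous to the one in the proof of Theorem \ref{tightnesstheo}, delivers the required u.c.p.~convergence. Alternatively, as hinted in the preamble, the splitting $ab-cd=(a-c)b+c(b-d)$ reduces the bivariate quantity to a sum of two expressions each of the univariate form analysed in \cite{corcuera2014asymptotics}; the delicate fractional-integration argument developed there applies essentially verbatim, because Theorem \ref{ohfinallyyes} ensures that the cross-covariance factor $r_{1,2}^{(n)}(k)^2$ obeys the same decay as its diagonal counterparts $r_{j,j}^{(n)}(k)^2$, so the bivariate second-moment estimate is structurally identical to the univariate one.
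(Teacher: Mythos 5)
There is a genuine gap in the central step of your argument. Under the hypotheses of Theorem \ref{CLT}, $\sigma\tot1$ and $\sigma\tot2$ are $\mathscr G$-measurable, where $\mathscr G$ is the sigma-algebra generated by the Gaussian core itself. Hence the weights $D_{i,j}^{l,n}$ and the second-chaos variables $\incf ni1G\,\incf ni2G-\mu_n$ are functionals of the \emph{same} Gaussian field: conditioning on $\mathscr G$ does not ``freeze the volatilities'' while leaving the Gaussian increments random, it determines everything, so $\E\bigl[(A^{'''n,l}_t)^2\mid\mathscr G\bigr]=(A^{'''n,l}_t)^2$ and your displayed formula for the conditional second moment is not valid. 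Taking the unconditional expectation does not rescue the computation either, because the cross-terms $\E\bigl[D_{i,j}^{l,n}D_{i',j'}^{l,n}(\cdots)(\cdots)\bigr]$ involve joint moments of the volatility functionals and the second-chaos variables and do not factorise into a product of a volatility bound and the covariances $r^{(n)}_{a,b}$. This is precisely the pitfall the paper warns about: the analogous ``freeze the weights and estimate in $L^2$'' (equivalently, mean-value-theorem) argument is the error on p.~724 of \cite{corcuera2006power}, and repairing it is exactly what required the fractional-integration machinery of \cite{corcuera2014asymptotics}. A secondary issue is that your increment bound $\E[(A^{'''n,l}_t-A^{'''n,l}_s)^2]\leq C|t-s|\Delta_l^{2\alpha}$, with exponent $1$ in $|t-s|$, is too weak for a Kolmogorov--Chentsov upgrade to uniform convergence.

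Your closing remark is, in fact, the paper's actual proof: one does not estimate $A^{'''n,l}_t$ directly, but invokes the weighted-random-sums result (Remark 1.1 and convergence (4)) of \cite{corcuera2014asymptotics}, whose only inputs are the \Holder continuity of the weight process $f(t_i)=\sigma\tot1_{(i-1)\Delta_n}\sigma\tot2_{(i-1)\Delta_n}$ and the functional weak convergence $\sum_{i\leq\floor{nt}}\xi_{i,n}\Rightarrow\sqrt\beta B_t$ supplied by Theorem \ref{weakconGC}; the remaining work, which your proposal omits, is the bookkeeping that identifies $A^{'n,l}_t$ with the first term of the expanded bracket and dominates the second term by $|A^{''n,l}_t|$ plus an a.s.\ vanishing remainder. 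To repair your write-up you should promote that ``alternative'' to the main argument and carry out this identification explicitly, discarding the conditional $L^2$ computation.
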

\begin{proof}[Proof of Proposition \ref{A'''}]
We need to set the following notation:
 $$\xi_{i,m}=\frac{1}{\sqrt m}\left(\incf mi1G \incf m12G-\E\left[\incf mi1G \incf m12G\right]\right), $$
and $f(t_i)=\sigma\tot1_{(i-1)\Delta_n}\sigma\tot2_{(i-1)\Delta_n}$. We will be using Remark 1.1 in the paper \cite{corcuera2014asymptotics}.
We know that: $$\sum_{i=1}^{\floor{mt}} \xi_{i,m} \Rightarrow \sqrt \beta W_t.$$
Convergence (4) in the paper reads:
\[
\Prob- \lim_{n\to \infty} \limsup_{m\to\infty} \sup_{t\in[0,T]} \left| \sum_{j=1}^{\floor{nt}}\sum_{i\in I_n(j)}\left(f(t_i)-f(u_{j-1})\right)\xi_{i,m}\right|=0,
\]
which in our setting and with our notation becomes:
\begin{multline*}
\Prob- \lim_{l\to \infty} \limsup_{n\to\infty} \sup_{t\in[0,T]}\Biggl|\sum_{j=1}^{\floor{lt}+1} \sum_{i\in I_l(j)} \left( \sigma\tot 1_{(i-1)\Delta_n}\sigma\tot 2_{(i-1)\Delta_n}-\sigma\tot 1_{(j-1)\Delta_l}\sigma\tot 1_{(j-1)\Delta_l}\right)\\\times\frac{1}{\sqrt n}\underbrace{\left(\incf ni1G\incf ni2G-\mu_n\right)}_{(1)}\Biggr|=0.
\end{multline*}
Expanding the bracket in (1) above, the first term gives us exactly term $A_t^{'n,l}$. The second term from the bracket (1) is:
\[
\begin{split}
&\Biggl|{\frac{\mu_n}{\sqrt n}\sum_{j=1}^{\floor{lt}+1} \sum_{i\in I_l(j)} \sigma \tot1_{(j-1)\Delta_n}\sigma \tot2_{(j-1)\Delta_n} -\frac{\mu_n}{\sqrt n}\sum_{j=1}^{\floor{lt}+1}\sum_{i\in I_l(j)}  \sigma \tot1_{(i-1)\Delta_n}\sigma \tot2_{(i-1)\Delta_n}}\Biggr|\\
\leq& \Biggl|\frac{\mu_n}{\sqrt n}\max_j\{\#I_l(j)\}\sum_{j=1}^{\floor{lt}+1}\sigma \tot1_{(j-1)\Delta_n}\sigma \tot2_{(j-1)\Delta_n}  -\frac{\mu_n}{\sqrt n}\sum_{j=1}^{\floor{lt}+1}\sum_{i\in I_l(j)}  \sigma \tot1_{(i-1)\Delta_n}\sigma \tot2_{(i-1)\Delta_n}\Biggr|\\
\leq & \Biggl|\frac {\mu_n}{\sqrt n} \left(\frac nl+1\right)\sum_{j=1}^{\floor{lt}+1}\sigma \tot1_{(j-1)\Delta_n}\sigma \tot2_{(j-1)\Delta_n}  -\frac{\mu_n}{\sqrt n}\sum_{j=1}^{\floor{lt}+1}\sum_{i\in I_l(j)}  \sigma \tot1_{(i-1)\Delta_n}\sigma \tot2_{(i-1)\Delta_n} \Biggr|
\end{split}
\]
which is bounded by $ \Biggl|A_t^{''n,l}+\frac {\mu_n}{\sqrt n}\sum_{j=1}^{\floor{lt}+1}\sigma \tot1_{(j-1)\Delta_n}\sigma \tot2_{(j-1)\Delta_n} \Biggr|$. This second term goes to zero a.s. for any fixed $l$, which concludes the proof. 

\end{proof}
\subsection{Convergence of the term $C^{n,l}_t$}
The term $C^{n,l}_t$ is the one that will give us the stable convergence we seek.
\begin{proposition}\label{C1}
Assume that the assumptions of Theorem \ref{CLT} hold. Then 
\[
\begin{pmatrix} G\tot1_t ,G\tot2_t,\frac{1}{\sqrt n} \sum_{i=1}^{\floor {nt}} \left(\incf ni1G \incf ni2G - \mu_n \right) \end{pmatrix}_{t\in[0,T]}
\]
converges weakly to 
\[
\begin{pmatrix} G\tot1_t ,G\tot2_t,\sqrt \beta B_t \end{pmatrix}_{t\in[0,T]}.
\]
\end{proposition}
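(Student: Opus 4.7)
The plan is to establish joint weak convergence on the product Skorokhod space $\mathcal D([0,T];\R^3)$ via the classical two-step strategy: convergence of finite-dimensional distributions and tightness. The decisive structural observation is that the first two coordinates are elements of the first Wiener chaos generated by $W^{(1)}, W^{(2)}$, whereas the centred scaled covariation in the third coordinate was expressed in Section \ref{S4} as an element of the second Wiener chaos. This makes the joint vectors amenable to Theorem \ref{4thmoment2}, which is exactly the tool tailored to such situations.

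First I would fix an arbitrary grid $0\le s_1<\dots<s_m\le T$ and assemble the random vector
\begin{equation*}
\mathbf V^n := \bigl(G^{(1)}_{s_1},\dots,G^{(1)}_{s_m},\,G^{(2)}_{s_1},\dots,G^{(2)}_{s_m},\,Z^n_{s_1},\dots,Z^n_{s_m}\bigr).
\end{equation*}
Every entry is a multiple Wiener integral: the first $2m$ entries are first-order integrals that do not depend on $n$, while the last $m$ entries are (sums of) second-order integrals, as made explicit in Section \ref{S4}. By Theorem \ref{4thmoment2}, joint weak convergence of $\mathbf V^n$ to a centred Gaussian vector reduces to componentwise weak convergence together with convergence of every pairwise covariance.

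For componentwise convergence, the first $2m$ marginals are independent of $n$, and Theorem \ref{weakconGC} provides $Z^n_{s_k}\Rightarrow \sqrt\beta B_{s_k}$. For the covariances, orthogonality of Wiener chaoses of different orders yields $\E[G^{(j)}_{s_k}\, Z^n_{s_l}]=0$ for every $n$, $j$, $k$, $l$, so all cross-block covariances vanish identically. The within-block covariances are either constant in $n$ (the Gaussian core block) or converge by Theorem \ref{finitedisttheo} (the $Z$-block). The limiting covariance matrix is therefore block diagonal, which simultaneously delivers Gaussian joint convergence and forces independence of the limiting Brownian motion from $(G^{(1)},G^{(2)})$, identifying the finite-dimensional limit with $(G^{(1)}_{s_1},\dots,G^{(2)}_{s_m},\sqrt\beta B_{s_1},\dots,\sqrt\beta B_{s_m})$.

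It remains to upgrade the finite-dimensional convergence to weak convergence on $\mathcal D([0,T];\R^3)$, which requires tightness. The first two coordinate processes are $n$-independent and therefore trivially tight, while the third is tight by Theorem \ref{tightnesstheo}; componentwise tightness implies joint tightness on the product space. Invoking Theorem 13.1 in \cite{billingsley2009convergence}, exactly as in the proof of Theorem \ref{weakconGC}, then concludes the argument. I do not anticipate a serious technical obstacle: chaos orthogonality yields the sought independence essentially for free, and all the delicate analytic work has already been carried out in Theorems \ref{finitedisttheo}, \ref{tightnesstheo} and \ref{weakconGC}.
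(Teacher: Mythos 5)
Your proposal is correct and follows essentially the same route as the paper: both arguments reduce joint convergence of the finite-dimensional distributions to marginal convergence via the multivariate Fourth Moment Theorem (Theorem \ref{4thmoment2}), using orthogonality of the first and second Wiener chaoses to obtain the block-diagonal limiting covariance and hence the independence of $B$ from $(G^{(1)},G^{(2)})$, and both establish tightness componentwise (the first two coordinates being $n$-independent, the third covered by Theorem \ref{tightnesstheo}) before concluding with Billingsley. The only cosmetic difference is that you work with the values $G^{(j)}_{s_k}$ and $Z^n_{s_k}$ at grid points while the paper phrases the finite-dimensional step in terms of increments over intervals $[a_k,b_k]$; this is immaterial.
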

\begin{proof}[Proof of Proposition \ref{C1}]
We split the proof into two parts: First, we prove  tightness and then convergence of the finite dimensional distributions. 

\noindent{\bf Tightness:}
By Theorem 13.2 in \cite{billingsley2009convergence}, a sequence of measures $\Prob_n$ is tight if and only if:
\begin{enumerate}
\item $\lim_{a\to\infty} \limsup_{n\to\infty} \Prob_n\left(x| \norm x \geq a\right)=0$
\item For any $\epsilon>0$, $\lim_{\delta} \limsup_{n\to\infty} \Prob_n\left(x| w'_x(\delta)\geq \epsilon\right)=0$,
\end{enumerate}
where $w'_x(\delta)$ is defined as follows. For $S\subseteq [0,T]$, call the \emph{modulus of continuity} $$w_x(S):= \sup_{s,t\in S} \norm{f(s)-f(t)}_{\R^3}.$$ A partition $0\leq t_1\leq\dots\leq t_v=T$ of $[0,T]$ is $\delta$-\emph{sparse}  if $\min_{1\leq i\leq v} (t_i-t_{i-1})>\delta$. Define, for $0<\delta<T$, $$w'_x(\delta)=\inf_{t_i} \max_{1\leq i\leq v}w_x[t_{i-1},t_i),$$ where the maximum runs over all $\delta$-sparse partitions $\{t_i\}$.

Our probability measures $\Prob_n$ live in $\mathcal D \left([0,T];\R^3\right),$ the space of \cadlag functions with values in $\R^3$, equipped with the Skorokhod topology. The norm in this space is defined as:
\[
\norm{f}_{\mathcal D \left([0,T];\R^3\right)}=\sup_{t\in[0,T]} \norm{f}_{\R^3},
\]
and hence the two conditions above only depend on the norm in $\R^3$. It is then sufficient to show them component-wise. The first two components trivially satisfy them, as the sequences reduce to only one measure per component.
The fact that the third component satisfies them both is a consequence of Theorem \ref{tightnesstheo} and the characterisation above.

\noindent{\bf Convergence of the finite dimensional distributions:}
We need to show that for any choice of positive numbers $a_k<b_k, k\in\{1,\dots,D\}$, the sequence of matrix variables:
\[
\begin{pmatrix} G\tot1_{b_k} -G\tot1_{a_k},G\tot2_{b_k}-G\tot2_{a_k},\frac{1}{\sqrt n}\sum_{i=\floor{n a_k}+1}^{\floor {nb_k}} \left(\incf ni1G \incf ni2G - \mu_n \right)\end{pmatrix}_{1\leq k\leq D}
\]
converges in law, as $n\to \infty$, to:
\begin{equation}
\label{poloopl}
\begin{pmatrix} G\tot1_{b_k} -G\tot1_{a_k},G\tot2_{b_k}-G\tot2_{a_k},\sqrt \beta \left(B_{b_k}-B_{a_k}\right)\end{pmatrix}_{1\leq k\leq D}.
\end{equation}
This we know already, as pointed out in the proof of Theorem \ref{weakconv}, as marginal convergence of sequence of variables within fixed Wiener chaoses implies joint convergence. The first two components lie in the first chaos, the third one lies in the second chaos.
The statement of Theorem \ref{weakconv} allows to conclude.
\end{proof}
\begin{proposition}\label{C2}
Assume that the assumptions of Theorem \ref{CLT} hold. Then $C_t^n$ converges stably in law to
$ \sqrt \beta \int_0^t \sigma_s \tot1 \sigma_s\tot 2\,dB_s$ in the Skorokhod space $ \mathcal D[0,T]$, where $\beta={ C(1,1)}$, see equation  \eqref{mknmknm}, and where first $n\to \infty$ for fixed $l$ and then $l\to \infty$.
Also, $B$ is Brownian motion, independent of $\mathscr F$ and defined on  an extension of the filtered probability space $(\Omega,\mathscr F,\mathscr F_t,\Prob)$. 
\end{proposition}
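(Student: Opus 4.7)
The strategy is to peel off the two limits one at a time. First, write
\[
C^{n,l}_t = \sum_{j=1}^{\floor{lt}} \sigma^{(1)}_{(j-1)\Delta_l}\sigma^{(2)}_{(j-1)\Delta_l}\, U^{n,l}_j,
\qquad
U^{n,l}_j := \frac{1}{\sqrt n}\sum_{i\in I_l(j)}\left(\incf ni1G\incf ni2G - \mu_n\right),
\]
so that $U^{n,l}_j = V^n_{j\Delta_l} - V^n_{(j-1)\Delta_l}$, where
$V^n_\cdot := \frac{1}{\sqrt n}\sum_{i=1}^{\floor{n\cdot}}(\incf ni1G\incf ni2G - \mu_n)$
is precisely the process treated in Proposition \ref{C1}. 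In this form $C^{n,l}_\cdot$ is a step process with at most $\floor{lT}$ jumps, whose values at the grid points $j\Delta_l$ are linear combinations of increments of $V^n$ with $\mathscr G$-measurable coefficients.

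Next, fix $l$ and let $n\to\infty$. Proposition \ref{C1} gives the joint weak convergence
$(G^{(1)}_\cdot, G^{(2)}_\cdot, V^n_\cdot) \Rightarrow (G^{(1)}_\cdot, G^{(2)}_\cdot, \sqrt\beta B_\cdot)$
with $B$ a Brownian motion independent of $\mathscr G=\sigma(G^{(1)},G^{(2)})$. Since the first two components are preserved in the limit and generate $\mathscr G$, this joint convergence is equivalent to $\mathscr G$-stable convergence of $V^n$ to $\sqrt\beta B$. Because the $\sigma^{(j)}$ are $\mathscr G$-measurable by assumption, an application of the continuous mapping theorem for stable convergence (Theorem \ref{continuousmapping}) to the linear functional that sends increments $(U^{n,l}_j)_j$ and weights $(\sigma^{(1)}_{(j-1)\Delta_l}\sigma^{(2)}_{(j-1)\Delta_l})_j$ to their weighted partial-sum process yields
\[
C^{n,l}_\cdot \xrightarrow[n\to\infty]{st.} M^l_\cdot := \sqrt\beta \sum_{j=1}^{\floor{l\cdot}} \sigma^{(1)}_{(j-1)\Delta_l}\sigma^{(2)}_{(j-1)\Delta_l}\left(B_{j\Delta_l}-B_{(j-1)\Delta_l}\right)
\]
in $\mathcal{D}[0,T]$. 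Tightness is immediate from the step-function structure with a bounded number of jumps, so finite-dimensional convergence at the grid points (which are a.s.\ continuity points of $B$) upgrades to Skorokhod convergence of the paths.

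Finally, let $l\to\infty$. Since $\sigma^{(1)}\sigma^{(2)}$ is \cadlag and locally bounded on $[0,T]$ and, on the extended space, is adapted to a filtration with respect to which $B$ is a Brownian motion, the Riemann sums $M^l_\cdot$ converge uniformly on $[0,T]$ in probability to the It\^o integral $\sqrt\beta\int_0^\cdot \sigma^{(1)}_s\sigma^{(2)}_s\,dB_s$ by standard results on stochastic integration (e.g.~\cite{protter2005stochastic}). Since u.c.p.\ convergence implies stable convergence, composing the two stable limits yields the claimed iterated-limit stable convergence. The principal obstacle is bookkeeping the stability (rather than mere weak convergence) through both limits: the decisive observation is that Proposition \ref{C1} already produces \emph{joint} convergence with the full Gaussian core, which is exactly what is needed to pull $\mathscr G$-measurable volatilities into and out of the limit without corrupting the independence of $B$ from $\mathscr G$.
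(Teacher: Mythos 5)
Your proof follows essentially the same route as the paper's: joint convergence from Proposition \ref{C1} upgraded to $\mathscr G$-stable (mixing) convergence, the continuous mapping theorem for stable convergence to attach the $\mathscr G$-measurable frozen volatilities, and convergence of the Riemann sums to the It\^o integral as $l\to\infty$ using that the integrand is \cadlag. The only cosmetic difference is that your opening identity $C^{n,l}_t=\sum_j \sigma^{(1)}_{(j-1)\Delta_l}\sigma^{(2)}_{(j-1)\Delta_l}U^{n,l}_j$ holds only up to a remainder of order $\mu_n/\sqrt n$ per block (since $\#I_l(j)$ need not equal $n/l$ exactly), a negligible term the paper disposes of explicitly at the end of its argument.
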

\begin{proof}[Proof of Proposition \ref{C2}]
The joint weak convergence in \eqref{poloopl} paired with the asymptotic independence of the limit $B$ and $G\tot1,G\tot2$ and an application of  Proposition \ref{dsknskrnavrkn} ensure that:
\[
\frac{1}{\sqrt n} \sum_{i=1}^{\floor {nt}} \left(\incf ni1G \incf ni2G - \mu_n \right)\Rightarrow \sqrt \beta B_t\qquad \text(mixing).
\]

Applying the continuous mapping Theorem \ref{continuousmapping} with the  sigma-algebra $\mathscr G$,  $\sigma\tot1_{(j-1)\Delta_l}\sigma\tot2_{(j-1)\Delta_l}$ as the measurable variable $\sigma$, $\frac{1}{\sqrt n}\sum_{i\in I_l(j)}\left(\incf ni1G\incf ni2G -\mu_n\right)$ as   $Y_n$
  and $g(x,y)=xy$,  since $Y_n\overset{st.}{\Rightarrow} \sqrt \beta \left(B_{j\Delta_l}-B_{(j-1)\Delta_l}\right)$, we have the following $\mathscr G$-stable convergence for fixed $l$ as $n\to \infty$:
\[
\sigma\tot1_{(j-1)\Delta_l}\sigma\tot2_{(j-1)\Delta_l}\frac{1}{\sqrt n}\sum_{i\in I_l(j)}\left(\incf ni1G\incf ni2G -\mu_n\right)\stable \sigma\tot1_{(j-1)\Delta_l}\sigma\tot2_{(j-1)\Delta_l}  \sqrt \beta \left(B_{j\Delta_l}-B_{(j-1)\Delta_l}\right).
\]
Finally we have that
\[
\Prob-\lim_{l\to \infty} \sum_{j=1}^{\floor{lt}}  \sigma\tot1_{(j-1)\Delta_l}\sigma\tot2_{(j-1)\Delta_l}  \sqrt \beta \left(B_{j\Delta_l}-B_{(j-1)\Delta_l}\right)=\sqrt\beta\int_0^t \sigma\tot1_s\sigma\tot2_s\,dB_s,
\]
because the integrand is c\`adl\`ag.
Modulo another term of the form $\frac {\mu_n}{\sqrt n}\sum_{j=1}^{\floor{lt}+1}\sigma \tot1_{(j-1)\Delta_n}\sigma \tot2_{(j-1)\Delta_n}$, which goes to zero a.s. as $n\to\infty$, we have proven stable convergence of the term $C_t^{n,l}$ in our decomposition.
\end{proof}
\subsection{Convergence of the term $D^n_t$}
\begin{proposition}\label{D}
Assume that the assumptions of Theorem \ref{CLT} hold. Then $\sup_{t\in[0,T]} \abs {D^n_t}\to 0$ almost surely.
\end{proposition}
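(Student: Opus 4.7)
The strategy is to reduce the claim to a standard Riemann sum approximation estimate, using the fact that under Assumption~\ref{hsgsgsfs} the paths of $\sigma^{(1)},\sigma^{(2)}$ are Hölder continuous with exponents strictly greater than $1/2$, while the prefactor $\mu_n$ is bounded.

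First I would observe that $\mu_n=r^{(n)}_{1,2}(0)$, and by Theorem~\ref{ohfinallyyes} we have $\mu_n\to \rho H$ as $n\to\infty$, so there exists $M<\infty$ with $|\mu_n|\le M$ for all $n$. Then, using $\Delta_n=1/n$, I would rewrite
\[
D^n_t=\sqrt{n}\,\mu_n\left(\Delta_n\sum_{j=1}^{\lfloor nt\rfloor}\sigma^{(1)}_{(j-1)\Delta_n}\sigma^{(2)}_{(j-1)\Delta_n}-\int_0^t\sigma^{(1)}_s\sigma^{(2)}_s\,ds\right),
\]
and split the bracketed difference as
\[
\sum_{j=1}^{\lfloor nt\rfloor}\int_{(j-1)\Delta_n}^{j\Delta_n}\!\!\left(\sigma^{(1)}_{(j-1)\Delta_n}\sigma^{(2)}_{(j-1)\Delta_n}-\sigma^{(1)}_s\sigma^{(2)}_s\right)ds\;-\;\int_{\lfloor nt\rfloor\Delta_n}^{t}\sigma^{(1)}_s\sigma^{(2)}_s\,ds.
\]

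Next I would fix a path. Since $\sigma^{(1)},\sigma^{(2)}$ are continuous on $[0,T]$, they are bounded by some (random) constant $K(\omega)$, and by Assumption~\ref{hsgsgsfs} they are Hölder of orders $\alpha^{(1)},\alpha^{(2)}\in(\tfrac12,1)$ with some finite (random) Hölder constants. Setting $\alpha:=\min(\alpha^{(1)},\alpha^{(2)})>\tfrac12$, the pointwise product $\sigma^{(1)}\sigma^{(2)}$ is then $\alpha$-Hölder with constant $C(\omega)<\infty$. This gives, uniformly in $t\in[0,T]$,
\[
\left|\sigma^{(1)}_{(j-1)\Delta_n}\sigma^{(2)}_{(j-1)\Delta_n}-\sigma^{(1)}_s\sigma^{(2)}_s\right|\le C(\omega)\,\Delta_n^{\alpha}\quad\text{for } s\in[(j-1)\Delta_n,j\Delta_n],
\]
so each integral in the first sum is bounded by $C(\omega)\Delta_n^{1+\alpha}$, and summing over at most $nT$ terms yields a bound of order $C(\omega)T\,\Delta_n^{\alpha}$. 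The boundary term is bounded by $K(\omega)^2\,\Delta_n$. Combining, the bracket is $O(\Delta_n^{\alpha})$ uniformly in $t$ almost surely.

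Finally, I would multiply by the $\sqrt n\,\mu_n$ factor to conclude
\[
\sup_{t\in[0,T]}|D^n_t|\le M\bigl(C(\omega)T+K(\omega)^2\bigr)\sqrt{n}\,\Delta_n^{\alpha}=O\bigl(n^{\frac12-\alpha}\bigr)\xrightarrow[n\to\infty]{}0\quad\text{a.s.,}
\]
since $\alpha>\tfrac12$. There is no genuine obstacle here; the only point to watch is that the Hölder constant of $\sigma^{(1)}\sigma^{(2)}$ on $[0,T]$ may be path-dependent, but it is finite almost surely, which is enough for the almost-sure uniform convergence claimed.
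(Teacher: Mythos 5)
Your proposal is correct and follows essentially the same route as the paper: both reduce $D^n_t$ to a Riemann-sum approximation error for $\int_0^t\sigma^{(1)}_s\sigma^{(2)}_s\,ds$ plus a boundary term, bound $\mu_n$ (the paper simply uses $|\mu_n|\le 1$ via Cauchy--Schwarz), and exploit the \Holder continuity of order $\min(\alpha^{(1)},\alpha^{(2)})>\tfrac12$ to beat the $\sqrt n$ factor. The only cosmetic difference is that the paper invokes the mean value theorem to pick intermediate points $s_j$ before applying the \Holder estimate, whereas you bound the integrand difference directly; the resulting estimate $O(n^{1/2-\alpha})$ is identical.
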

\begin{proof}[Proof of Proposition \ref{D}]
Note that  $D^n_t$ is given by 
\[
\frac {1}{\sqrt n} \mu_n  \sum_{j=1}^{\floor{nt}} \sigma \tot1_{(j-1)\Delta_n}\sigma\tot2_{(j-1)\Delta_n} -\sqrt n \mu_n \int_0^t \sigma\tot1_s\sigma\tot2_s\,ds.
\]
Recall that $\alpha\tot i$ denotes the H\"older continuity index of $\sigma\tot i$.
Rewriting the integral:
\[
\int_0^t \sigma\tot1_s\sigma\tot2_s\,ds=\sum_{j=1}^{\floor{nt}} \int_{(j-1)\Delta_n}^{j\Delta_n} \sigma\tot1_s\sigma\tot2_s\,ds + \int_{\floor{nt}\Delta_n}^t \sigma\tot1_s\sigma\tot2_s\,ds,
\]
and using the mean value theorem, we get:
\[
\begin{split}
\abs{D^n_t} \leq &\frac{1}{\sqrt n} \mu_n \left( \sum_{j=1}^{\floor {nt}} \abs{\sigma\tot 1_{(j-1)\Delta_n}\sigma\tot2_{(j-1)\Delta_n}-\sigma\tot1_{s_j}\sigma\tot2_{s_j}}\right)+ \frac {1}{\sqrt n} \mu_n \norm{\sigma\tot1_{s_j}\sigma\tot2_{s_j}}_{\infty}\\
\leq&\frac{1}{\sqrt n} \mu_n \left( \sum_{j=1}^{\floor {nt}}\abs{(j-1)\Delta_n-s_j}^{\min(\alpha\tot1,\alpha\tot2)} \abs{\sigma\tot1_{(j-1)\Delta_n}+\sigma\tot2_{s_j}}\right)+ \frac {1}{\sqrt n} \mu_n \norm{\sigma\tot1_{s_j}\sigma\tot2_{s_j}}_{\infty}\\
\leq& C \frac{1}{\sqrt n} \mu_n {\Delta_n}^{\min(\alpha\tot1,\alpha\tot2)} nT l + \frac{1}{\sqrt n}\mu_n \norm{\sigma\tot1_{s_j}\sigma\tot2_{s_j}}_{\infty}\\=&C \sqrt n \mu_n {\Delta_n}^{\min(\alpha\tot1,\alpha\tot2)} T  + \frac{1}{\sqrt n}\mu_n \norm{\sigma\tot1_{s_j}\sigma\tot2_{s_j}}_{\infty}.
\end{split}
\]
Hence, $\sup_{t\in[0,T]} \abs {D^n_t}\to 0$ almost surely, since ${\min(\alpha\tot1,\alpha\tot2)} > \frac12$.
\end{proof}

\subsection{Proofs of Theorem \ref{CLT} and Proposition \ref{LLN}}
\begin{proof}[Proof of Theorem \ref{CLT}]
The statement of Theorem \ref{CLT} is a consequence of Propositions \ref{A}, \ref{A'''}, \ref{C2}, \ref{D}, noting that they imply that, for any $\epsilon >0$,
\[
\lim_{l\to \infty}\limsup_{n\to\infty}\Prob \left(\sup_{t\in[0,T]} \left| A^n_t + A^{' n,l}_t + A^{''n,l}_t + D^n_t\right| \geq \epsilon \right)=0.
\]
It is now sufficient to apply  Theorem 3.2 in \cite{billingsley2009convergence} to conclude.
 \end{proof}

Finally we provide the proof of the weak law of large numbers.
\begin{proof}[Proof of Proposition \ref{LLN}]
We note that, for each fixed $t\in [0,T]$, \eqref{CLTFormula} implies that:
\[
\left\{\sqrt n\left(\frac{1}{ n} \sum_{i=1}^{\floor {nt}} \incf ni1Y \incf ni2Y- \E\left[ \incf n11G \incf n12G\right]\int_0^t \sigma\tot1_s\sigma\tot2_s\,ds\right)\right\}_{n\in\N}
\]
converges weakly, hence, by Prohorov's theorem, it is a tight sequence.
It then follows that:
\[
\frac{1}{ n} \sum_{i=1}^{\floor {nt}} \incf ni1Y \incf ni2Y- \E\left[ \incf n11G \incf n12G\right]\int_0^t \sigma\tot1_s\sigma\tot2_s\,ds \overset{\Prob}{\to}  0.
\]
Now:
\begin{align*}
&\E\left[ \incf n11G \incf n12G\right]\\&=\frac{\int_0^{\Delta_n} g\tot1 (s) g\tot2 (s)\rho\,ds + \int_0^\infty \left(g\tot 1(s+\Delta_n)-g\tot1 (s)\right)\left(g\tot 2(s+\Delta_n)-g\tot2 (s)\right)\rho\,ds}{\tau\tot1 _n\tau\tot2_n }\\&=\rho \frac{c(\Delta_n)}{\tau\tot1_n\tau\tot2_n}.
\end{align*}
Hence:
\[
{\Delta_n} \sum_{i=1}^{\floor {nt}} \incf ni1Y \incf ni2Y   -    \rho\frac{c(\Delta_n)}{\tau\tot1_n\tau\tot2_n} \int_0^t \sigma\tot1_s\sigma\tot2_s\,ds \overset{\Prob}{\to}0,
\]
which is equivalent to:
\[
\Delta_n\sum_{i=1}^{\floor{nt}} \Delta^n_iY\tot1 \Delta_i^nY\tot2 -\rho c(\Delta_n) \int_0^t \sigma\tot1_s\sigma\tot2_s\,ds \overset{\Prob}{\to}0,
\]
or indeed to:
\[
\frac{\Delta_n}{c(\Delta_n)} \sum_{i=1}^{\floor{nt}} \Delta^n_iY\tot1 \Delta_i^nY\tot2 \overset{\Prob}{\to} \rho\int_0^t \sigma\tot1_s\sigma\tot2_s\,ds.
\]
\end{proof}

\section*{Acknowledgement}
We wish to thank Damiano Brigo, Dan Crisan, Mikko Pakkanen,  Mark Podolskij and Riccardo Passeggeri   for helpful discussions. 
AG is grateful to the Department of Mathematics of Imperial College for his PhD scholarship which supported this research. AEDV acknowledges
financial support by a Marie Curie FP7 Integration Grant within the 7th European Union Framework Programme (grant agreement number PCIG11-GA-2012-321707).



\end{document}